\def\@settitle{\begin{center}%
		\baselineskip14\p@\relax
		\normalfont\LARGE\bfseries
		\@title
	\end{center}%
}
\def\section{\@startsection{section}{1}%
	\z@{.7\linespacing\@plus\linespacing}{.5\linespacing}%
	{\normalfont\large\bfseries}}
\def\subsection{\@startsection{subsection}{2}%
	\z@{.5\linespacing\@plus.7\linespacing}{.5\linespacing}%
	{\normalfont\bfseries}}
\def\@setauthors{%
  \begingroup
  \def\thanks{\protect\thanks@warning}%
  \trivlist
  \centering\footnotesize \@topsep30\p@\relax
  \advance\@topsep by -\baselineskip
  \item\relax
  \author@andify\authors
  \def\\{\protect\linebreak}%
  \authors%
  \ifx\@empty\contribs
  \else
    ,\penalty-3 \space \@setcontribs
    \@closetoccontribs
  \fi
  \endtrivlist
  \endgroup
}
\definecolor{darkblue}{rgb}{0.0, 0.0, 0.45}
\definecolor{darkgreen}{rgb}{0.0, 0.45, 0}
\date{\today}
\theoremstyle{plain}
\newtheorem{Thm}{Theorem}[section]
\newtheorem{Prop}[Thm]{Proposition}
\newtheorem{Lem}[Thm]{Lemma}
\newtheorem{Cor}[Thm]{Corollary}
\newtheorem{As}[Thm]{Assumption}
\newtheorem{Rem}[Thm]{Remark}
\newtheorem{Set}{Setting}
\newcommand{\R}{\mathbb{R}}
\newcommand{\Ru}{\overline {\R}}
\newcommand{\Rlu}{\overline{\underline {\R}}}
\newcommand{\N}{\mathbb{N}}
\newcommand{\ra}{\rightarrow}
\newcommand{\ol}[1]{\overline{#1}}
\newcommand{\ul}[1]{\underline{#1}}
\newcommand{\wt}{\widetilde}
\newcommand{\Let}{\coloneqq}
\newcommand{\teL}{\eqqcolon}
\DeclareMathOperator*{\argmin}{\arg\!\min}
\DeclareMathOperator*{\argmax}{\arg\!\max}
\newcommand{\tr}{^{\top}}
\newcommand{\itr}{^{-\top}}
\newcommand{\norm}[1]{\left\Vert #1 \right\Vert}
\newcommand{\inner}[2]{\left\langle #1, #2 \right\rangle }
\def\ssum{\begingroup\textstyle \sum\endgroup}
\newcommand{\opt}{^\star}
\newcommand{\EE}{\mathds{E}}
\DeclareMathOperator{\sgn}{sgn}
\newcommand{\setc}[1]{\mathbb{#1}}
\newcommand{\setd}[1]{\mathbb{#1}^{\mathrm{d}}}
\newcommand{\setg}[1]{\mathbb{#1}^{\mathrm{g}}}
\newcommand{\setsg}[1]{\mathbb{#1}^{\mathrm{g}}_{\mathrm{sub}}}
\newcommand{\diam}[1]{\Delta_{#1}}
\newcommand{\lftc}[1]{#1^{*}}
\newcommand{\lftcd}[1]{#1^{*\mathrm{d}}}
\newcommand{\lftd}[1]{#1^{\mathrm{d}*}}
\newcommand{\lftdd}[1]{#1^{\mathrm{d}*\mathrm{d}}}
\newcommand{\bcc}[1]{#1^{**}}
\newcommand{\bcdd}[1]{#1^{\mathrm{d}*\mathrm{d}*}}
\newcommand{\lerp}[1]{\wt{#1}}
\newcommand{\llerp}[1]{\ol{#1}}
\newcommand{\ef}[1]{#1_{\mathrm{w}}}
\newcommand{\disc}[1]{#1^{\mathrm{d}}}
\newcommand{\dyn}{f}
\newcommand{\dynx}{f_\mathrm{s}}
\newcommand{\dynu}{f_\mathrm{i}}
\newcommand{\cost}{C}
\newcommand{\costx}{C_\mathrm{s}}
\newcommand{\costu}{C_\mathrm{i}}
\newcommand{\dpo}{\mathcal{T}}
\newcommand{\ddpo}{\mathcal{T}^{\mathrm{d}}}
\newcommand{\idpo}{\mathcal{T}^{\mathrm{i}}}
\newcommand{\iidpo}{\mathcal{I}}
\newcommand{\cdpo}{\widehat{\mathcal{T}}}
\newcommand{\dcdpo}{\widehat{\mathcal{T}}^{\mathrm{d}}}
\newcommand{\mdcdpo}{\widehat{\mathcal{T}}^{\mathrm{d}}_{\mathrm{m}}}
\newcommand{\idcdpo}{\widehat{\mathcal{I}}^{\mathrm{d}}}
\DeclareMathOperator{\dist}{d}
\DeclareMathOperator{\dish}{d_H}
\DeclareMathOperator{\ord}{\mathcal{O}}
\DeclareMathOperator{\co}{co}
\DeclareMathOperator{\lip}{L}
\DeclareMathOperator{\dom}{dom}
\title[]{Fast Approximate Dynamic Programming \\
for Input-Affine Dynamics}
\author[]{M.~A.~S.~Kolarijani and P.~Mohajerin Esfahani}
\thanks{The authors are with Delft Center for Systems and Control, Delft University of Technology, 
Delft, The Netherlands. Email: \texttt{\{M.A.SharifiKolarijani, P.MohajerinEsfahani\}@tudelft.nl}.}
\thanks{The authors would like to thank G. F. Max for several fruitful discussions.}
\thanks{This research is part of a project that has received funding from the European Research Council (ERC) under the grant TRUST-949796.}
\begin{document}

\begin{abstract}
We propose two novel numerical schemes for approximate implementation of the dynamic programming~(DP) operation concerned with finite-horizon, optimal control of discrete-time systems with input-affine dynamics. 
The proposed algorithms involve discretization of the state and input spaces and are based on an alternative path that solves the dual problem corresponding to the DP operation. 
We provide error bounds for the proposed algorithms, along with a detailed analysis of their computational complexity. 
In particular, for a specific class of problems with separable data in the state and input variables, the proposed approach can reduce the typical time complexity of the DP operation from $\ord(XU)$ to $\ord (X+U)$, 
where $X$ and $U$ denote the size of the discrete state and input spaces, respectively. 
This reduction is achieved by an algorithmic transformation of the minimization in the DP operation to an addition via discrete conjugation. 

\smallskip
\noindent \textbf{Keywords:} approximate dynamic programming, conjugate duality, input-affine dynamics, computational complexity
\end{abstract}

\maketitle

\section{Introduction}
\label{sec:intro}

Dynamic programming (DP) is one of the most common tools used for tackling sequential decision problems with applications in, e.g., optimal control, operation research, and reinforcement learning. 
The basic idea of DP is to solve the Bellman equation
\begin{equation} \label{eq:BE}
J_t(x_t)  = \min_{u_t} \big\{ \cost(x_t,u_t) + J_{t+1}(x_{t+1}) \big\},
\end{equation}
backward in time $t$ for the costs-to-go~$J_t$, where $\cost(x_t,u_t)$ is the cost of taking the control action~$u_t$ at the state~$x_t$ (value iteration). 
Arguably, the most important drawback of DP is in its high computational cost in solving problems with a large scale \emph{finite} state space, which are usually described as Markov decision processes (MDPs). 
Indeed, in \cite{balaji18}, the authors show that for a finite-horizon MDP, the problem of determining whether a control action $u_0$ is an optimal action at a given initial state $x_0$ using value iteration is EXPTIME-complete. 
For problems with a \emph{continuous} state space, solving the Bellman equation requires solving an infinite number of optimization problems. 
This usually renders the exact implementation of the DP operation impossible, except for a few cases with an available closed-form solution, e.g., linear quadratic regulator \cite[Sec.~4.1]{Bertsekas05}. 
To address this issue, various schemes have been introduced, commonly known as \emph{approximate} dynamic programming; see, e.g., \cite{Bertsekas19,Pow11}. 
A common scheme is to use a sample-based approach accompanied by some form of function approximation. 
This usually amounts to deploying a brute force search over the discretizations/abstractions of the state and input spaces, 
leading to a time complexity of at least $\ord(XU)$, where $X$ and $U$ are the cardinality of the discrete state and input spaces, respectively. 

For some DP problems, it is possible to reduce this complexity by using duality, i.e., approaching the minimization problem in~\eqref{eq:BE} in the conjugate domain. 
For instance, for the deterministic linear dynamics~$x_{t+1} = Ax_t + B u_t$ with the separable cost~$\cost(x_t,u_t) = \costx(x_t) + \costu(u_t)$, we have
\begin{equation} \label{eq:BE conj}
J_t(x_t)  \geq \costx(x_t) + \lftc{\left[ \lftc{\costu}(-B\tr \cdot) + \lftc{J_{t+1}} \right]} (Ax_t) ,
\end{equation}
where the operator~$\lftc{[\cdot]}$ denotes the Legendre-Fenchel transform, also known as (convex) conjugate transform. 
Under some  technical assumptions (including, among others, convexity of the functions~$\costu$ and $J_{t+1}$), we have equality in~\eqref{eq:BE conj}; see \cite[Prop.~5.3.1]{Bertsekas09}. 
Notice how the minimization operator in~\eqref{eq:BE} transforms to a simple addition in~\eqref{eq:BE conj}. 
This observation signals the possibility of a significant reduction in the time complexity of solving the Bellman equation, at least for particular classes of DP problems.    

Approaching the DP problem through the lens of the conjugate duality goes back to Bellman \cite{Bell62}. 
Applications of this idea for reducing the computational complexity were later explored in \cite{Esog90, Klein91}. 
Fundamentally, these approaches exploit the operational duality of infimal convolution and addition with respect to (w.r.t.) the conjugate transform: For two functions~$f_1,f_2: \R^n \ra [-\infty,  +\infty]$, we have $\lftc{(f_1 \Box f_2)} = \lftc{f_1} + \lftc{f_2}$, where 
\begin{equation}\label{eq:inf conv}
f_1 \Box f_2 (w) \Let \inf_{w_1,w_2} \{ f_1(w_1) + f_2(w_2): w_1+w_2 = w \},
\end{equation}
is the infimal convolution of $f_1$ and $f_2$ \cite{Rock74}. 
This is analogous to the well-known operational duality of convolution and multiplication w.r.t. the Fourier transform. 
Actually, the Legendre-Fenchel transform plays a similar role as the Fourier transform when the underlying algebra is the max-plus algebra, as opposed to the conventional plus-times algebra. 
Much like the extensive application of the latter operational duality upon introduction of the fast Fourier transform, ``fast'' numerical algorithms for conjugate transform can facilitate efficient applications of the former one. 
Interestingly, the first fast algorithm for computing (discrete) conjugate functions, known as fast Legendre transform, was inspired by fast Fourier transform, and enjoys the same \emph{log-linear} complexity in the number of data points; see \cite{Corrias96,Lucet96} and the references therein. 
Later, this complexity was reduced by introducing a \emph{linear-time} algorithm known as linear-time Legendre transform (LLT)~\cite{Lucet97}. 
We refer the interested reader to~\cite{Lucet10} for an extensive review of these algorithms (and other similar algorithms) and their applications. 
In this regard, we also note that recently, in~\cite{Sutter20}, the authors introduced a quantum algorithm for computing the (discrete) conjugate of convex functions, which achieves a \emph{poly-logarithmic} time complexity in the number of data points.

One of the first and most widespread applications of these fast algorithms has been in solving the Hamilton-Jacobi equation \cite{Achdou14,Corrias96,Costes14}. 
Another interesting area of application is image processing, where the Legendre-Fenchel transform is commonly known as ``distance transform'' \cite{Felzen12,Lucet09}. 
Recently, in \cite{Jacobs19}, the authors used these algorithms to tackle the optimal transport problem with strictly convex costs, with applications in image processing and in numerical methods for solving partial differential equations. 
However, surprisingly, the application of these fast algorithms in solving discrete-time optimal control problems seems to remain largely unexplored. 
An exception is \cite{Caprio16}, where the authors use LLT to propose the ``fast value iteration'' algorithm for computing the fixed-point of the Bellman operator arising from a specific class of infinite-horizon, discrete-time DP problems. 
Indeed, the setup in \cite{Caprio16} corresponds to a subclass of problems considered in our study that allows for a ``perfect'' transformation of the minimization in the DP operation in the primal domain to an addition in the dual (conjugate) domain; this connection will be discussed in detail in Section~\ref{subsec:fvi}. 
Let us also note that the algorithms developed in~\cite{Felzen12,Lucet09} for distance transform can also potentially tackle the (discretized) optimal control problems similar to the ones considered in this study. 
In particular, these algorithms require the stage cost to be reformulated as a convex distance function of the current and next states. 
While this property might arise naturally, it can generally be restrictive as it is in our case. 

Another line of work, closely related to ours, involves algorithms that utilize max-plus algebra in solving, continuous-time, continuous-space, deterministic optimal control problems; see, e.g., \cite{Akian08,McEn03,McEn06}. 
These works exploit the compatibility of the Bellman operation with max-plus operations 
and approximate the value function as a max-plus linear combination. 
In particular, recently in \cite{Bach19,Bach20}, the authors used this idea to propose an approximate value iteration algorithm for deterministic MDPs with continuous state space. 
In this regard, we note that the proposed algorithms in the current study also implicitly involve representing cost functions as max-plus linear combinations. 
The key difference of the proposed algorithms is however to choose a dynamic, grid-like (factorized) set of slopes in the dual space to control the error and reduce the computational cost; we will discuss this point in more detail in Section~\ref{subsec:max-plus}

\noindent\textbf{Paper organization and summary of main results.} 
In this study, we consider the approximate implementation of the DP operation arising in the finite-horizon optimal control of discrete-time systems with continuous state and input spaces. 
The proposed approach involves discretization of the state space and is based on an alternative path that solves the dual problem corresponding to the DP operation by utilizing the LLT algorithm for discrete conjugation. 
After presenting some preliminaries in Section~\ref{sec:prel}, 
we provide the problem statement and its standard solution via the (discrete) DP algorithm (in the primal domain) in Section~\ref{sec:DP}. 
Sections~\ref{sec:conj DP} and \ref{sec:reduced complexity} contain our main results on the proposed alternative approach for solving the DP problem in the conjugate domain: 

\begin{itemize}

\item[(i)] \textbf{From minimization in primal domain to addition in dual domain:} In Section~\ref{sec:conj DP}, we introduce the discrete conjugate DP (d-CDP) algorithm (Algorithm~\ref{alg:d-CDP general}) for problems with deterministic \emph{input-affine} dynamics; see Figure~\ref{fig:CDP1} for the sketch of the algorithm.
In particular, we use the linearity of the dynamics in the input to effectively incorporate the operational duality of addition and infimal convolution, 
and transform the minimization in the DP operation to a simple addition at the expense of three conjugate transforms. 
This, in turn, leads to transferring the computational cost from the input domain~$\setc{U}$ to the dual state domain~$\setc{Y}$ (Theorem~\ref{thm:complexity d-CDP Alg 1}). 

\item[(ii)] \textbf{From quadratic to linear time complexity:} In Section~\ref{sec:reduced complexity}, we modify the proposed d-CDP algorithm (Algorithm~\ref{alg:d-CDP separ}) and reduce its time complexity (Theorem~\ref{thm:complexity d-CDP Alg 2})
for a subclass of problems with \emph{separable} data in the state and input variables; see Figure~\ref{fig:CDP2} for the sketch of the algorithm. 
In particular, for this class, the time complexity of computing the costs-to-go at each step is of $\ord(X+U)$, compared to the standard complexity of $\ord(XU)$.

\item[(iii)] \textbf{Error bounds and construction of dual domain:} We analyze the error of the proposed d-CDP algorithm and its modification (Theorems~\ref{thm:error d-CDP Alg 1} and \ref{thm:error d-CDP Alg 2}). 
The error analysis is based on two preliminary results on the error of discrete conjugation (Lemma~\ref{lem:conj vs. d-conj}) and approximate conjugation (Lemma~\ref{lem:conj via lerp} and Corollary~\ref{cor:disc conj via lerp}). 
Moreover, we use the results of our error analysis to provide concrete guidelines for the construction of a dynamic discrete dual space in the proposed algorithms (Remark~\ref{rem:constr Y}). 

\end{itemize}

In Section~\ref{sec:numerical ex}, we validate our theoretical results and compare the performance of the proposed algorithms with the benchmark d-DP algorithm through a synthetic numerical example. 
Further numerical examples (and descriptions of the extensions of the proposed algorithms) are provided in Appendix~\ref{app:num example}. 
Moreover, to facilitate the application of the proposed algorithms, we provide a MATLAB package:

\begin{itemize}

\item[(iv)] \textbf{The d-CDP MATLAB package:} The algorithms presented in this study and their extensions are available in the d-CDP MATLAB package~\cite{Kolari20}. 
A brief description of this package is provided in Appendix~\ref{app:matlab}. 
The numerical examples of this study are also included in the package and reproducible.  
\end{itemize}

Section~\ref{sec:remarks} concludes the paper by providing further remarks on the proposed algorithms such as their limitations and their relation to the existing schemes and algorithms in the literature. 

\begin{figure}[t]
\begin{subfigure}{.48\textwidth}
	\centering
	\scalebox{.9}{\begin{tikzpicture}

\filldraw[fill=red!10!white, draw=red!10!white] (-.6,1.17) rectangle (7.7,-2.2);
\filldraw[fill=blue!10!white, draw=blue!10!white] (-.6,-2.3) rectangle (7.7,-4.8);

\path (0,0) node(a)[align=left] {$J(x^+)$}
      (7,0) node(b)[align=right] {$\dpo[J](x)$}
      (0,-4) node(c)[align=left] {$\lftc{J}(y)$}
      (5,-4) node(d) {$\phi_x(y)$}
      (5,-1.5) node(e) {$\lftc{\phi_x} \big(\dynx(x)\big)$}
      (7,-1.5) node(f)[align=right] {$\cdpo[J](x)$}
      (6.66,-4.6) node(g)[align=left,blue] {Dual domain}
      (6.5,1) node(h)[align=left,red] {Primal domain}
      (6.1,-1.5) node [] {$=$};

\draw[->,very thick,red] (a) -- node[above=2pt] {$\min\limits_{u} \left\{ \cost(x,u) + J(x^+) \right\}$} (b);
\draw[->,very thick,black] (a) -- node[right=2pt,] {$\lftc{[\cdot]}$} (c);
\draw[->,very thick,blue] (c) -- node[above=2pt] {$+\lftc{\cost_x}(-\dynu(x)\tr y)$} (d);
\draw[->,very thick,black] (d) -- node[right=2pt] {$\lftc{[\cdot]}$} (e);

\end{tikzpicture}}
	\caption{}
	\label{fig:CDP1}
\end{subfigure}%
\begin{subfigure}{.48\textwidth}
	\centering
	\scalebox{.9}{\begin{tikzpicture}

\filldraw[fill=red!10!white, draw=red!10!white] (-.6,1.17) rectangle (7.7,-2.2);
\filldraw[fill=blue!10!white, draw=blue!10!white] (-.6,-2.3) rectangle (7.7,-4.8);

\path (0,0) node(a)[align=left] {$J(x^+)$}
      (7,0) node(b)[align=right] {$\dpo[J](x)$}
      (0,-4) node(c)[align=left] {$\lftc{J}(y)$}
      (3.5,-4) node(d) {$\phi(y)$}
      (3.5,-1.5) node(e) {$\lftc{\phi} \big(\dynx(x)\big)$}
      (7,-1.5) node(f)[align=right] {$\cdpo[J](x)$}
	  (6.66,-4.6) node(g)[align=left,blue] {Dual domain}
      (6.5,1) node(h)[align=left,red] {Primal domain};

\draw[->,very thick,red] (a) -- node[above=2pt] {$\min\limits_{u} \left\{ \cost(x,u) + J(x^+) \right\}$} (b);
\draw[->,very thick,black] (a) -- node[right=2pt,] {$\lftc{[\cdot]}$} (c);
\draw[->,very thick,blue] (c) -- node[above=2pt] {$+\lftc{\costu}(-B\tr y)$} (d);
\draw[->,very thick,black] (d) -- node[right=2pt] {$\lftc{[\cdot]}$} (e);
\draw[->,very thick,blue] (e) --  node[below=2pt] {$+\costx(x)$} (f);

\end{tikzpicture}}
	\caption{}
	\label{fig:CDP2}
\end{subfigure} 
\caption{Sketch of the proposed algorithms -- the standard DP operation in the primal domain (upper red paths) and the conjugate~DP (CDP) operation through the dual domain (bottom blue paths): (a)~Setting~\ref{Set:prob class I} with dynamics $x^+ = \dynx(x)+\dynu(x)\cdot u$ and generic cost $C(x,u)$;
(b) Setting~\ref{Set:prob class II} with dynamics $x^+ = \dynx(x)+B\cdot u$ and separable cost $C(x,u) = \costx(x)+\costu(u)$.
}
\label{fig:CDP}
\end{figure}

\section{Notations and preliminaries}
\label{sec:prel}
\subsection{General notations} \label{subsec:notations} 
 
We use $\R$ to denote the real line and $\Ru = \R \cup \{+\infty\}, \ \Rlu = \R \cup \{\pm\infty\}$ to denote its extensions. 
The standard inner product in $\R^n$ and the corresponding induced 2-norm are denoted by $\inner{\cdot}{\cdot}$ and $\norm{\cdot}$, respectively. 
We also use $\norm{\cdot}$ to denote the operator norm (w.r.t. the 2-norm) of a matrix; i.e., for $A \in \R^{m\times n}$, we denote $\norm{A} = \sup \{\norm{Ax}: \norm{x} = 1 \}$.
We use the common convention in optimization whereby the optimal value of an infeasible minimization (resp. maximization) problem is set to $+\infty$ (resp. $-\infty$). 

Continuous (infinite, uncountable) sets are denoted as $\setc{X}, \setc{Y}, \ldots$. 
For \emph{finite} (discrete) sets, we use the superscript~$\mathrm{d}$ as in $\setd{X}, \setd{Y}, \ldots$ to differentiate them form infinite sets. 
Moreover, we use the superscript~$\mathrm{g}$ to differentiate \emph{grid-like} (factorized) finite sets. 
Precisely, a grid~$\setg{X} \subset \R^n$ is the Cartesian product $\setg{X} = \Pi_{i=1}^{n} \setg{X}_{i} = \setg{X}_{1} \times \ldots \times \setg{X}_{n}$, where $\setg{X}_{i}$ is a finite set of real numbers $x_i^1 < x_i^2 < \ldots < x_i^{X_i}$. 
Assuming $X_i \geq 3$ for all $i = 1,\ldots, n$, we define $\setsg{X} \Let \Pi_{i=1}^{n} {\setsg{X}}_{i}$, 
where $ {\setsg{X}}_{i}= \setg{X}_i \setminus \{ x_i^1, x_i^{X_i} \}$; 
that is, $\setsg{X}$ is the \emph{sub-grid} derived by omitting the smallest and largest elements of $\setg{X}$ in each dimension.
The cardinality of a finite set $\setd{X}$ (or $\setg{X}$) is denoted by $X$. 
Let $\setc{X}, \setc{Y}$ be two arbitrary sets in $\R^n$.
The convex hull of $\setc{X}$ is denoted by $\co (\setc{X})$. 
The diameter of~$\setc{X}$ is defined as $\diam{\setc{X}} \Let \sup_{x,y \in \setc{X}} \norm{x-y}$.  
We use $\dist (\setc{X},\setc{Y}) \Let \inf_{x \in \setc{X}, y \in \setc{Y}} \norm{x-y}$ to denote the distance between $\setc{X}$ and $\setc{Y}$. 
The one-sided Hausdorff distance \emph{from}~$\setc{X}$ \emph{to}~$\setc{Y}$ is defined as $\dish(\setc{X}, \setc{Y}) \Let \sup_{x \in \setc{X}} \inf_{y \in \setc{Y}} \norm{x-y}$. 

For an extended real-valued function~$h:\R^n\ra \ol{\R}$, 
the effective domain of $h$ is defined by $\dom (h) \Let \{x \in \R^n: h(x) < +\infty \}$. 
The Lipschitz constant of~$h$ over a set~$\setc{X} \subset \dom (h)$ is denoted by 
$$\lip (h; \setc{X}) \Let  \sup_{x,y \in \setc{X}} \frac{|h(x)-h(y)|}{\norm{x-y}}.$$  
We also denote $\lip (h) \Let \lip \big(h; \dom (h)\big)$ and 
$\setc{L}(h) \Let \Pi_{i=1}^{n} \left[\lip_i^-(h), \lip_i^+(h)\right]$, where 
$\lip_i^+(h)$ (resp. $\lip_i^-(h)$) is the maximum (resp. minimum) slope of the function $h$ along the $i$-th dimension, i.e.,
\begin{align*}
\lip_i^+(h) &\Let \sup \left\{ \frac{h(x) - h(y)}{x_i - y_i}: x,y \in \dom (h), \ x_i > y_i, \ x_j = y_j\ (j\neq i) \right\}, \\
\lip_i^-(h) &\Let \inf \left\{ \frac{h(x) - h(y)}{x_i - y_i}: x,y \in \dom (h), \ x_i > y_i, \ x_j = y_j\ (j\neq i) \right\}.
\end{align*}
The subdifferential of~$h$ at a point~$x \in \R^n$ is defined as 
$$
\partial h(x) \Let \big\{y \in \R^n: h(\tilde{x}) \geq h(x) + \inner{y}{\tilde{x}-x}, \forall \tilde{x} \in \dom (h) \big\}.$$ 

We report the complexities using the standard big O notations $\ord$ and $\wt{\ord}$, where the latter hides the logarithmic factors. 
In this study, we are mainly concerned with the dependence of the computational complexities on \emph{the size of the finite sets} involved (discretization of the primal and dual domains). 
In particular, we ignore the possible dependence of the computational complexities on the dimension of the variables, unless they appear in the power of the size of those discrete sets; e.g., the complexity of a single evaluation of an analytically available function is taken to be of $\ord(1)$, regardless of the dimension of its input and output arguments.  
For the reader's convenience, we also provide the list of the most important objects used throughout this article in Table~\ref{tab:notation}. 
\begin{table}[h]
\caption{List of the most important notational conventions.}
\label{tab:notation}
\begin{center}
\begin{small}
\begin{tabular}{rlc}
\toprule
\multicolumn{2}{l}{Notation \& Description} & Definition \\
\midrule
LERP & Multilinear interpolation \& extrapolation  & --  \\
LLT & Linear-time Legendre Transform  & --  \\ 
$\disc{h}$ & Discretization of the function $h$  & --  \\
$\lerp{\disc{h}}$ & Extension of the discrete function $\disc{h}$  & --  \\
$\llerp{\disc{h}}$ & LERP extension of the discrete function $\disc{h}$ (with grid-like domain)  & --  \\
$\lftc{h}$ & Conjugate of $h$  & \eqref{eq:conj}  \\
$\lftd{h}$ & Discrete conjugate of $h$ (conjugate of $\disc{h}$)  & \eqref{eq:conj disc}  \\
$\bcc{h}$ & Biconjugate of $h$  & \eqref{eq:biconj}  \\
$\bcdd{h}$ & Discrete biconjugate of $h$  & \eqref{eq:biconj doub disc}  \\
$\dpo$ & Dynamic Programming (DP) operator & \eqref{eq:DP op} \& \eqref{eq:DP op separ} \\
$\ddpo$ & Discrete DP (d-DP) operator & \eqref{eq:d-DP op}  \\
$\cdpo$ & Conjugate DP (CDP) operator & \eqref{eq:CDP op conj}  \\
$\dcdpo$ & Discrete CDP (d-CDP) operator & \eqref{eq:d-CDP op} \& \eqref{eq:d-CDP op separ}  \\
$\mdcdpo$ & Modified d-CDP operator  & \eqref{eq:d-CDP op separ modified}  \\
 \bottomrule
\end{tabular}
\end{small}
\end{center}
\end{table}

\subsection{Extension of discrete functions} \label{subsec:extension operation}

Consider an extended real-valued function~$h:\R^n\ra \ol{\R}$, and its discretization $\disc{h}: \setd{X} \ra \Ru$, where $\setd{X}$ is a finite subset of $\R^n$. 
We use the superscript $\mathrm{d}$, as in $\disc{h}$, to denote the discretization of $h$. 
We particularly use this notation in combination with a second operation to emphasize that the second operation is applied on the discretized version of the operand. 
In particular, we use $\lerp{\disc{h}}:\R^n \ra \Ru$ to denote the extension of the discrete function~$\disc{h}:\setd{X} \ra \Ru$. 
The extension can be considered as a generic parametric approximation $\lerp{\disc{h}}[\theta]:\R^n \ra \Ru$, where the parameters $\theta$ are computed using regression, i.e., by fitting $\lerp{\disc{h}}[\theta]$ to the data points $\disc{h}:\setd{X} \ra \Ru$.

\begin{Rem}[Complexity of extension operation] \label{rem:extension operation}
We use $E$ to denote the complexity of a generic extension operator. 
That is, for each $x\in\R^n$, the time complexity of the single evaluation $\lerp{\disc{h}}(x)$ is assumed to be of $\ord(E)$, with $E$ (possibly) being a function of $X$.  
\end{Rem} 

For example, for the linear approximation $\lerp{\disc{h}}(x) = \sum_{i=1}^{B} \theta_i \cdot b_i(x)$, we have $E = B$ (the size of the basis), while for the kernel-based approximation $\lerp{\disc{h}} (x)= \sum_{\bar{x} \in \setd{X}}\theta_{\bar{x}} \cdot r(x,\bar{x})$, we generally have $E \leq X$. 
A kernel-based approximator of interest in the following sections is the \emph{multilinear interpolation \& extrapolation} (LERP) of a discrete function with a \emph{grid-like} domain; see \cite[App.~D]{Kirk2010} for a description of LERP in the two-dimensional case.. 
Hence, we denote this operation with the different notation $\llerp{\disc{h}}: \R^n \ra \Ru$ for the discrete function $\disc{h}:\setg{X} \ra \Ru$. 
Notice that the LERP extension preserves the value of the function at the discrete points, i.e, $\llerp{\disc{h}}(x) = \disc{h} (x)$ for all $x \in \setg{X}$. 
In order to facilitate our complexity analysis in subsequent sections, we discusses the computational complexity of LERP in the following remark. 

\begin{Rem}[Complexity of LERP] \label{rem:complexity of LERP}
Given a discrete function~$\disc{h}: \setg{X} \ra \R$ with a grid-like domain~$\setg{X} \subset \R^n$,  
the time complexity of a single evaluation of the LERP extension $\llerp{\disc{h}}$ at a point~$x \in \R^n$  is of $\ord(2^n+\log X) = \wt{\ord}(1)$ if $\setg{X}$ is non-uniform, and of $\ord(2^n) = \ord(1)$ if $\setg{X}$ is uniform. 
To see this, note that, in the case $\setg{X}$ is non-uniform, LERP requires $\ord(\log X)$ operations to find the position of $x$ w.r.t. the grid points, using binary search. 
If $\setg{X}$ is a uniform grid, this can be done in  $\ord(n)$ time. 
Upon finding the position of $x$, LERP then involves a series of one-dimensional linear interpolations or extrapolations along each dimension, which takes $\ord(2^n)$ operations.    
\end{Rem} 

For a convex function~$h: \R^n \ra \Ru$, we have $\partial h (x) \neq \emptyset$ for all~$x$ in the relative interior of~$\setc{X}$ \cite[Prop.~5.4.1]{Bertsekas09}. 
This characterization of convexity can be extended to discrete functions. 
A discrete function~$\disc{h}: \setd{X} \ra \R$ is called \emph{convex-extensible} if $\partial \disc{h} (x) \neq \emptyset$ for all~$x \in \dom(h) = \setd{X}$. 
Equivalently, $\disc{h}$ is convex-extensible, if it can be extended to a convex function $\lerp{\disc{h}}:\R^n \ra \Ru$ such that $\lerp{\disc{h}}(x) = \disc{h} (x)$ for all~$x \in \setd{X}$; we refer the reader to, e.g., \cite{Murota03} for different extensions of the notion of convexity to discrete functions. 

\subsection{Legendre-Fenchel Transform}

Consider an extended-real-valued function $h: \R^n \ra \Ru$, with a nonempty effective domain $\dom(h) = \setc{X}$. 
The Legendre-Fenchel transform (convex conjugate) of~$h$ is the function
\begin{equation} \label{eq:conj}
\lftc{h}: \R^n \ra \Rlu: y \mapsto \sup_{x \in \setc{X}} \left\{ \inner{y}{x} - h(x) \right\}.
\end{equation} 
Note that the conjugate function $\lftc{h}$ is convex by construction. 
In this study, we particularly consider \emph{discrete} conjugation, which involves computing the conjugate function using the discretized version $\disc{h}: \setd{X} \ra \Ru$ of the function $h$, where $\setd{X} \cap \setc{X} \neq \emptyset$. 
We use the notation $\lftd{[\cdot]}$, as opposed the standard notation $\lftc{[\cdot]}$, for discrete conjugation; that is, 
\begin{equation} \label{eq:conj disc}
\lftd{h} = \lftc{[\disc{h}]}: \R^n \ra \R: y \mapsto \max_{x \in \setd{X}} \left\{ \inner{y}{x} - \disc{h}(x) \right\}.
\end{equation} 
The biconjugate of $h$ is the function 
\begin{equation} \label{eq:biconj}
\bcc{h} = \lftc{[\lftc{h}]}: \R^n \ra \Rlu: x \mapsto \sup_{y \in \R^n} \{\inner{x}{y} - \lftc{h}(y) \} = \sup_{y \in \R^n} \ \inf_{z \in \setc{X}} \left\{ \inner{x-z}{y} + h(z) \right\}.
\end{equation}
Using the notion of discrete conjugation $\lftd{[\cdot]}$, we also define the (doubly)  discrete biconjugate
\begin{equation} \label{eq:biconj doub disc}
\bcdd{h} = \lftd{[\lftd{h}]}: \R^n \ra \R: x \mapsto \max_{y \in \setd{Y}} \ \{\inner{x}{y} - \lftdd{h}(y) \} =  \max_{y \in \setd{Y}} \ \min_{z \in \setd{X}} \left\{ \inner{x-z}{y} + \disc{h}(z) \right\},
\end{equation}
where $\setd{X}$ and $\setd{Y}$ are finite subsets of $\R^n$ such that $\setd{X} \cap \setc{X} \neq \emptyset$.

The Linear-time Legendre Transform (LLT) is an efficient algorithm for computing the discrete conjugate over a finite \emph{grid-like} dual domain. 
Precisely, to compute the conjugate of the function~$h: \setc{X} \ra \R$, LLT takes its discretization $\disc{h}: \setd{X} \ra \R$ as an input, and outputs $\lftdd{h}: \setg{Y} \ra \R$, for the grid-like dual domain $\setg{Y}$. 
That is, LLT is equivalent to the operation $\lftdd{[\cdot]}$. 
We refer the interested reader to \cite{Lucet97} for a detailed description of the LLT algorithm. 
We will use the following result for analyzing the computational complexity of the proposed algorithms.

\begin{Rem}[Complexity of LLT] \label{rem:complexity of LLT}
Consider a function $h: \R^n \ra \Ru$ and its discretization over a grid-like set $\setg{X} \subset \R^n$ such that $\setg{X} \cap \dom(h) \neq \emptyset$. 
LLT computes the discrete conjugate function $\lftdd{h}:\setg{Y} \ra \R$ using the data points $\disc{h}: \setg{X} \ra \Ru$, with a time complexity of $\ord\big(\Pi_{i=1}^n(X_i + Y_i)\big)$, where $X_i \ (\text{resp. }Y_i)$ is the cardinality of the $i$-th dimension of the grid $\setg{X} \ (\text{resp. }\setg{Y})$. 
In particular, if the grids $\setg{X}$ and $\setg{Y}$ have approximately the same cardinality in each dimension, then the time complexity of LLT is of $\ord(X+Y)$ \cite[Cor.~5]{Lucet97}. 
\end{Rem}

Hereafter, to simplify the exposition, we consider the following assumption. 

\begin{As}[Grid sizes in LLT]\label{As:grid size LLT}
The primal and dual grids used for LLT operation have approximately the same cardinality in each dimension.
\end{As}

\subsection{Preliminary results on conjugate transform}\label{subsec:pre lemmas}

In what follows, we provide two preliminary lemmas on the error of discrete conjugate transform and its approximate version. 
Although tailored for the error analysis of the proposed algorithms, we present these results in a  generic format to facilitate their possible application/extension beyond this study. 

Let us begin with recalling some of the notations introduced so far. 
Consider a function $h:\R^n \ra \Ru$ with a nonempty effective domain $\setc{X} = \dom(h)$, and 
its discretization $\disc{h}:\setd{X} \ra \R$ where $\setd{X} \subset \setc{X}$. 
Let $\lftc{h}: \R^n \ra \Ru$ be the conjugate~\eqref{eq:conj} of $h$,  
and also let $\lftd{h}: \R^n \ra \R$ be the discrete conjugate~\eqref{eq:conj disc} of $h$, using the primal discrete domain $\setd{X}$. 

\begin{Lem}[Conjugate vs. discrete conjugate]\label{lem:conj vs. d-conj}
Let $h$ be proper, closed, and convex. Then, 
\begin{equation}\label{eq:conj vs. d-conj I}
0 \leq \lftc{h}(y) - \lftd{h}(y) \leq \min\limits_{x \in \partial  \lftc{h}(y)} \bigg\{ \big[ \norm{y} + \lip \big( h; \{x\} \cup \setd{X} \big) \big] \cdot \dist(x,\setd{X}) \bigg\} \teL \wt{e}_1(y, h, \setd{X}), \quad \forall y \in \R^n. 
\end{equation}
If, moreover, $\setc{X}$ is compact and $h$ is Lipschitz continuous, then 
\begin{equation}\label{eq:conj vs. d-conj II}
0 \leq \lftc{h}(y) - \lftd{h}(y) \leq \big[ \norm{y} + \lip (h) \big] \cdot \dish (\setc{X},\setd{X})\teL \wt{e}_2(y,h,\setd{X}), \quad \forall y \in \R^n. 
\end{equation}
\end{Lem}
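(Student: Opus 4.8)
The plan is to dispatch the lower bound at once, then derive the upper bound \eqref{eq:conj vs. d-conj I} from the variational meaning of $\partial\lftc{h}(y)$, and finally specialize to the compact/Lipschitz case to get \eqref{eq:conj vs. d-conj II}. The lower bound $\lftd{h}(y)\le\lftc{h}(y)$ is immediate: $\lftd{h}(y)=\max_{x\in\setd{X}}\{\inner{y}{x}-h(x)\}$ maximizes the very objective defining $\lftc{h}(y)=\sup_{x\in\setc{X}}\{\inner{y}{x}-h(x)\}$, but over the smaller set $\setd{X}\subseteq\setc{X}$.

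For the upper bound in \eqref{eq:conj vs. d-conj I}, the key fact I would invoke is that, since $h$ is proper, closed, and convex, every subgradient of the conjugate is a maximizer in the conjugate: by the Fenchel--Young equality, $x\in\partial\lftc{h}(y)\iff\lftc{h}(y)=\inner{y}{x}-h(x)$. Fix any such $x$ (if $\partial\lftc{h}(y)=\emptyset$ the right-hand side of \eqref{eq:conj vs. d-conj I} equals $+\infty$ by the paper's infeasibility convention, so nothing is to prove), and let $\hat{x}\in\setd{X}$ realize $\dist(x,\setd{X})$. Feasibility of $\hat{x}$ for the discrete problem gives $\lftd{h}(y)\ge\inner{y}{\hat{x}}-h(\hat{x})$, whence
\begin{equation*}
\lftc{h}(y)-\lftd{h}(y)\le\inner{y}{x-\hat{x}}-\big[h(x)-h(\hat{x})\big].
\end{equation*}
I would then bound the two terms separately: Cauchy--Schwarz gives $\inner{y}{x-\hat{x}}\le\norm{y}\,\norm{x-\hat{x}}$, and the definition of the local Lipschitz constant over $\{x\}\cup\setd{X}$ gives $-[h(x)-h(\hat{x})]\le\lip(h;\{x\}\cup\setd{X})\,\norm{x-\hat{x}}$. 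Since $\norm{x-\hat{x}}=\dist(x,\setd{X})$, this yields the bracketed bound for the chosen $x$; taking the infimum over $x\in\partial\lftc{h}(y)$ gives \eqref{eq:conj vs. d-conj I}.

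To obtain \eqref{eq:conj vs. d-conj II}, I would use compactness of $\setc{X}$ and continuity of $h$ to conclude that the supremum defining $\lftc{h}(y)$ is attained at some $x^\star\in\setc{X}$, so that $\partial\lftc{h}(y)\ni x^\star$ is nonempty. Applying \eqref{eq:conj vs. d-conj I} at $x^\star$ and enlarging the two factors via $\lip(h;\{x^\star\}\cup\setd{X})\le\lip(h)$ (a Lipschitz constant over a subset of $\dom(h)$ is no larger than over all of $\dom(h)$) and $\dist(x^\star,\setd{X})\le\sup_{x\in\setc{X}}\dist(x,\setd{X})=\dish(\setc{X},\setd{X})$ delivers the claim, uniformly in $y$.

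The only genuinely delicate point I anticipate is the Fenchel--Young characterization of $\partial\lftc{h}(y)$: it is precisely the closedness hypothesis that secures $\bcc{h}=h$ and hence the equivalence $x\in\partial\lftc{h}(y)\iff y\in\partial h(x)\iff\lftc{h}(y)=\inner{y}{x}-h(x)$, which is what converts an abstract subgradient into an explicit maximizer attaining the supremum. Handling the empty-subdifferential case through the $+\infty$ convention is the only other bookkeeping; everything else reduces to Cauchy--Schwarz and the Lipschitz estimate.
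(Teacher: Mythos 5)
Your proposal is correct and follows essentially the same route as the paper's proof: the lower bound by restriction of the supremum, the upper bound in \eqref{eq:conj vs. d-conj I} by combining the Fenchel--Young equality for $x \in \partial\lftc{h}(y)$ with the nearest discrete point $\hat{x}\in\setd{X}$, Cauchy--Schwarz, and the local Lipschitz estimate (handling $\partial\lftc{h}(y)=\emptyset$ via the $+\infty$ convention), and then \eqref{eq:conj vs. d-conj II} by using compactness to produce a subgradient inside $\setc{X}$ and enlarging both factors. The only cosmetic difference is that you justify nonemptiness of $\partial\lftc{h}(y)$ through attainment of the supremum, while the paper states directly that compactness of $\setc{X}$ yields $\partial\lftc{h}(y)\cap\setc{X}\neq\emptyset$; these are the same observation.
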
 

\begin{proof}
See Appendix~\ref{proof:conj vs. d-conj}.
\end{proof}

The preceding lemma indicates that discrete conjugation leads to an under-approximation of the conjugate function, 
with the error depending on the discrete representation~$\setd{X}$ of the primal domain~$\setc{X}$. 
In particular, the inequality~\eqref{eq:conj vs. d-conj I} implies that for $y \in \R^n$, if $\setd{X}$ contains $x \in \partial  \lftc{h}(y)$, which is equivalent to $y \in \partial h(x)$ by the assumptions, then $\lftd{h}(y) = \lftc{h}(y)$. 

We next present another preliminary however vital result on approximate conjugation. 
Let $\lftcd{h}: \setg{Y} \ra \R$ be the discretization of $\lftc{h}$ over the grid-like dual domain $\setg{Y} \subset \dom(\lftc{h}) \subseteq \R^n$. 
Also, let $\llerp{\lftcd{h}}: \R^n \ra \R$ be the extension of $\lftcd{h}$ using LERP. 
The approximate conjugation is then simply the approximation of $\lftc{h} (y)$ via $\llerp{\lftcd{h}}(y)$ for $y \in \R^n$. 
This approximation introduces a one-sided error: 

\begin{Lem}[Approximate conjugation using LERP] \label{lem:conj via lerp}
Let $\setc{X} = \dom(h)$ be compact. Then, 
\begin{equation} \label{eq:conj via lerp error}
0 \leq \llerp{\lftcd{h}} (y)  - \lftc{h} (y) \leq \diam{\setc{X}} \cdot \dist(y,\setg{Y}), \quad \forall y \in \co (\setg{Y}).
\end{equation}
If, moreover, the dual grid $\setg{Y}$ is such that $\co(\setsg{Y}) \supseteq \setc{L}(h)$, then 
\begin{equation} \label{eq:conj via lerp error 2}
0 \leq \llerp{\lftcd{h}} (y)  - \lftc{h} (y) \leq \diam{\setc{X}} \cdot \dish\big(\co(\setg{Y}),\setg{Y}\big), \quad \forall y \in \R^n.
\end{equation}
\end{Lem}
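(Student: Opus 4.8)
\emph{Lower bound.} The plan is to handle the two inequalities of~\eqref{eq:conj via lerp error} separately and then to bootstrap them to~\eqref{eq:conj via lerp error 2}. First I would fix $y\in\co(\setg{Y})$ and locate the grid cell $\prod_{i}[a_i,b_i]$ of $\setg{Y}$ containing it. On this cell the value $\llerp{\lftcd{h}}(y)$ is, by definition of LERP, a convex combination $\sum_{k}\lambda_k\,\lftc{h}(v_k)$ of the values of $\lftc{h}$ at the cell vertices $v_k$, with weights $\lambda_k\ge 0$, $\sum_k\lambda_k=1$, and $\sum_k\lambda_k v_k=y$. Since $\lftc{h}$ is convex by construction~\eqref{eq:conj}, Jensen's inequality gives $\lftc{h}(y)\le\sum_k\lambda_k\lftc{h}(v_k)=\llerp{\lftcd{h}}(y)$, which is precisely the left inequality $0\le\llerp{\lftcd{h}}(y)-\lftc{h}(y)$.

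\emph{Upper bound, reduction step.} Because $\setc{X}=\dom(h)$ is compact and $h$ is closed, proper and convex, the supremum defining $\lftc{h}(v_k)$ in~\eqref{eq:conj} is attained at some $x_k\in\setc{X}$, and any such maximizer satisfies $x_k\in\partial\lftc{h}(v_k)$. The subgradient inequality $\lftc{h}(y)\ge\lftc{h}(v_k)+\inner{x_k}{y-v_k}$ gives $\lftc{h}(v_k)-\lftc{h}(y)\le\inner{x_k}{v_k-y}$, so that
\[ \llerp{\lftcd{h}}(y)-\lftc{h}(y)=\sum_k\lambda_k\big[\lftc{h}(v_k)-\lftc{h}(y)\big]\le\sum_k\lambda_k\inner{x_k}{v_k-y}. \]
Letting $\hat v\in\setg{Y}$ be a grid point nearest to $y$ (for a product grid this is the closest cell vertex, at distance $\dist(y,\setg{Y})$) with associated maximizer $\hat x$, I would subtract the identically-zero quantity $\inner{\hat x}{\sum_k\lambda_k(v_k-y)}=0$ to rewrite the bound as $\sum_k\lambda_k\inner{x_k-\hat x}{v_k-y}$, where every $x_k,\hat x\in\setc{X}$ and hence $\norm{x_k-\hat x}\le\diam{\setc{X}}$.

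\emph{Main obstacle.} The delicate step is to turn $\sum_k\lambda_k\inner{x_k-\hat x}{v_k-y}$ into the claimed factor $\diam{\setc{X}}\cdot\dist(y,\setg{Y})$. A termwise Cauchy--Schwarz estimate $\norm{x_k-\hat x}\,\norm{v_k-y}\le\diam{\setc{X}}\,\norm{v_k-y}$ is too lossy, since the weighted average $\sum_k\lambda_k\norm{v_k-y}$ can exceed $\dist(y,\setg{Y})$. Instead I would exploit two structural facts: the multilinear weights factorize as $\lambda_k=\prod_i w_i^{k_i}$ with $w_i^{\cdot}$ the one-dimensional interpolation weights in coordinate $i$, and $\partial\lftc{h}$ is monotone. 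Symmetrizing the sum over pairs of vertices recasts it as $\tfrac12\sum_{k,\ell}\lambda_k\lambda_\ell\inner{v_k-v_\ell}{x_k-x_\ell}$, whose summands are nonnegative by monotonicity; reducing this coordinate by coordinate and using the elementary one-dimensional estimate $\tfrac{s_i t_i}{d_i}\le\min(s_i,t_i)$ (with $s_i=y_i-a_i$, $t_i=b_i-y_i$, $d_i=b_i-a_i$) collapses the cell geometry onto the per-coordinate distances $\delta_i=\min(s_i,t_i)$. Assembling these Euclidean contributions into the single factor $\dist(y,\setg{Y})=\big(\sum_i\delta_i^2\big)^{1/2}$, balanced against $\diam{\setc{X}}$, is the crux of the argument, and I expect this to be the hardest part to pin down rigorously.

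\emph{Extension to~\eqref{eq:conj via lerp error 2}.} For $y\notin\co(\setg{Y})$ the LERP value is an affine \emph{extrapolation}, and extrapolating a strictly convex function \emph{under}-estimates it, so the sign of the error is no longer automatic. This is exactly what the hypothesis $\co(\setsg{Y})\supseteq\setc{L}(h)$ precludes: since $\setc{L}(h)$ contains the coordinate-wise slope ranges of $h$, it captures all slopes at which $\lftc{h}$ bends, so once every such slope lies in the interior sub-grid $\co(\setsg{Y})$, the conjugate $\lftc{h}$ is affine outside $\co(\setg{Y})$, its outermost grid layers already sit in this affine regime, and LERP extrapolation reproduces $\lftc{h}$ exactly there. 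Consequently the error for any $y\in\R^n$ is governed by a point of $\co(\setg{Y})$, and the pointwise factor $\dist(y,\setg{Y})$ of~\eqref{eq:conj via lerp error} may be replaced by its uniform counterpart $\sup_{y'\in\co(\setg{Y})}\dist(y',\setg{Y})=\dish\big(\co(\setg{Y}),\setg{Y}\big)$, yielding~\eqref{eq:conj via lerp error 2}. I anticipate that carefully justifying this affine-saturation claim will be the second main technical point.
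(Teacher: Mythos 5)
Your lower-bound argument is complete and is the same as the paper's: the LERP weights form a convex combination reproducing $y$, so Jensen's inequality applied to the convex function $\lftc{h}$ gives the left-hand side of \eqref{eq:conj via lerp error}.

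The upper bounds are another matter: both steps that you defer (``the crux of the argument'' and the affine-saturation claim) are the actual content of the lemma, so the proposal is a program rather than a proof, and the first of these gaps cannot be closed along the route you describe. Carrying your symmetrization out coordinatewise --- using the product form of the multilinear weights, $|(x_k-x_\ell)_i|\le w_i$ with $w_i$ the width of $\setc{X}$ along coordinate $i$, and your one-dimensional estimate $s_it_i/d_i\le\delta_i=\min(s_i,t_i)$ --- gives
\[
\llerp{\lftcd{h}}(y)-\lftc{h}(y)\;\le\;\sum_{i=1}^{n} w_i\,\frac{s_it_i}{d_i}\;\le\;\Big(\sum_{i=1}^{n} w_i^2\Big)^{1/2}\cdot\dist(y,\setg{Y}),
\]
so what multiplies $\dist(y,\setg{Y})$ is the diagonal of the bounding box of $\setc{X}$, not $\diam{\setc{X}}$, and this loss is unavoidable: the inequality with $\diam{\setc{X}}$ fails for non-box domains. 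Concretely, for $n=2$ let $\setc{X}$ be the equilateral triangle with unit side length and vertices $(0,0)$, $a=(\cos\frac{\pi}{12},\sin\frac{\pi}{12})$, $b=(\sin\frac{\pi}{12},\cos\frac{\pi}{12})$, so $\diam{\setc{X}}=1$; let $h=0$ at the origin, $h=\eta$ at $a$ and $b$, and $\setg{Y}=\{0,1\}^2$. For $y=(\epsilon,\epsilon)$ with $0<\epsilon\le 2\eta/3$ one has $\lftc{h}(y)=0$, while $\llerp{\lftcd{h}}(y)\ge 2\epsilon(1-\epsilon)\big(\cos\frac{\pi}{12}-\eta\big)\approx 1.93\,\epsilon$ for small $\eta$, which exceeds $\diam{\setc{X}}\cdot\dist(y,\setg{Y})=\sqrt{2}\,\epsilon$. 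Hence no choice of pairings or maximizers can recover the claimed constant; the coordinate widths of $\setc{X}$, not its diameter, are what your (otherwise correct) subgradient bookkeeping controls. You were also right not to trust a termwise Cauchy--Schwarz estimate: the paper's own proof performs exactly such a termwise step, bounding $\inner{y^{(k)}-y}{x}$ by $\diam{\setc{X}}\cdot\dist(y,\setg{Y})$ inside the maximum, which presumes both $\norm{x}\le\diam{\setc{X}}$ and that the distance from $y$ to an \emph{arbitrary} cell vertex $y^{(k)}$ is at most the distance to the \emph{nearest} grid point; your obstruction lives precisely where those estimates are loose.

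Regarding \eqref{eq:conj via lerp error 2}, your mechanism is the same as the paper's (outside the slope range $\setc{L}(h)$ the conjugate is affine along the extrapolated coordinate, and the hypothesis $\co(\setsg{Y})\supseteq\setc{L}(h)$ places the two outermost grid layers in that affine regime), but the assertion that LERP extrapolation ``reproduces $\lftc{h}$ exactly there'' overstates it: the transverse interpolation error does not vanish off $\co(\setg{Y})$. What the paper proves, by choosing the primal maximizers consistently across the outer layers, is that the error at an exterior point \emph{equals} the error at a matched point of $\co(\setg{Y})$, which is then bounded uniformly by $\diam{\setc{X}}\cdot\dish\big(\co(\setg{Y}),\setg{Y}\big)$. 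Your sketch asserts this conclusion without that argument, so it too remains a gap, though one of missing detail rather than a failing step.
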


\begin{proof}
See Appendix~\ref{proof:conj via lerp}.
\end{proof}

As expected, the error due to the discretization~$\setg{Y}$ of the dual domain~$\setc{Y}$ depends on the resolution of the discrete dual domain. 
We also note that the condition $\co(\setsg{Y}) \supseteq \setc{L}(h)$ in the second part of the preceding lemma (which implies that $h$ is Lipschitz continuous), essentially requires the dual grid $\setg{Y}$ to \emph{more than cover the range of slopes} of the function $h$. 

The algorithms developed in this study use LLT to compute discrete conjugate functions.  
However, as we will see, we sometimes require the value of the conjugate function at points other than the dual grid points used in LLT. 
To solve this issue, we use the same approximation described above, but now for discrete conjugation. 
In this regard, we note that the result of Lemme~\ref{lem:conj via lerp} also holds for discrete conjugation. 
To be precise, consider the discrete function $\disc{h}:\setd{X}\ra \R$. 
Let $\lftdd{h}: \setg{Y} \ra \R$ be the discretization of $\lftd{h}$ over the grid-like dual domain $\setg{Y} \subset \R^n$, 
and let $\llerp{\lftdd{h}}: \R^n \ra \R$ be the extension of $\lftdd{h}$ using LERP. 

\begin{Cor}[Approximate discrete conjugation using LERP] \label{cor:disc conj via lerp}
We have
\begin{equation} \label{eq:disc conj via lerp error}
0 \leq \llerp{\lftdd{h}} (y)  - \lftd{h} (y) \leq \diam{\setd{X}} \cdot \dist(y,\setg{Y}), \quad \forall y \in \co (\setg{Y}). 
\end{equation}
If, moreover, the dual grid $\setg{Y}$ is such that $\co(\setsg{Y}) \supseteq \setc{L}(\disc{h})$, 
then 
\begin{equation} \label{eq:disc conj via lerp error 2}
0 \leq \llerp{\lftdd{h}} (y)  - \lftd{h} (y) \leq \diam{\setd{X}} \cdot \dish\big(\co(\setg{Y}),\setg{Y}\big), \quad \forall y \in \R^n.
\end{equation}
\end{Cor}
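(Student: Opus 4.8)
The plan is to obtain this corollary as an immediate specialization of Lemma~\ref{lem:conj via lerp}, applied not to a continuous function but to the discrete function $\disc{h}$ itself. Concretely, I would regard $\disc{h}$ as an extended-real-valued function on $\R^n$, equal to the given values on $\setd{X}$ and to $+\infty$ elsewhere, so that its effective domain is $\dom(\disc{h}) = \setd{X}$. Under this viewpoint the discrete conjugate, its discretization over $\setg{Y}$, and the LERP extension appearing in the corollary are literally the conjugate $\lftc{[\disc{h}]}$, its grid discretization, and its LERP extension appearing in the lemma.

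First I would check that $\disc{h}$ meets the hypotheses of Lemma~\ref{lem:conj via lerp}. Since $\setd{X}$ is a finite set it is closed and bounded, hence compact, so the compact-domain requirement of the first part holds; and $\disc{h}$ is real-valued on the nonempty set $\setd{X}$, so it is proper. I would then match the objects via definition~\eqref{eq:conj disc}: the conjugate of $\disc{h}$ is exactly $\lftd{h}$, its discretization over the grid $\setg{Y}$ is $\lftdd{h}$, and the LERP extension of the latter is $\llerp{\lftdd{h}}$. Substituting these identifications into~\eqref{eq:conj via lerp error}, with the primal domain now $\setd{X}$ (so that $\diam{\setc{X}}$ becomes $\diam{\setd{X}}$), reproduces exactly~\eqref{eq:disc conj via lerp error} for every $y \in \co(\setg{Y})$.

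For the second part I would argue identically. Because $\setd{X}$ is finite, the coordinatewise slope box $\setc{L}(\disc{h})$ is well defined and bounded, and $\disc{h}$ is trivially Lipschitz; the hypothesis $\co(\setsg{Y}) \supseteq \setc{L}(\disc{h})$ is therefore precisely the discrete instance of the covering condition $\co(\setsg{Y}) \supseteq \setc{L}(h)$ required in the second part of the lemma. Invoking that part, again with $\diam{\setc{X}}$ replaced by $\diam{\setd{X}}$, yields the global estimate~\eqref{eq:disc conj via lerp error 2} for all $y \in \R^n$.

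The one delicate point — and the step I expect to be the main, if mild, obstacle — is to confirm that the argument establishing Lemma~\ref{lem:conj via lerp} uses nothing about $h$ beyond compactness of $\dom(h)$, finiteness of $\diam{\dom(h)}$, and (for the global bound) boundedness of the slope box $\setc{L}(h)$; no continuity or convexity of $h$ is invoked, since the conjugate is convex and piecewise affine by construction. As all of these ingredients survive verbatim when the domain is the finite set $\setd{X}$, the corollary follows with no new estimates, and in particular without any convexity assumption on $\disc{h}$, in agreement with the statement.
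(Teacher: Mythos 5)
Your proposal is correct and matches the paper's own proof: the paper likewise obtains the first bound by applying Lemma~\ref{lem:conj via lerp} directly to $\disc{h}$, noting that the finite set $\setd{X}$ is compact, and obtains the second bound by observing that the condition $\co(\setsg{Y}) \supseteq \setc{L}(\disc{h})$ has the same implications as in the lemma's proof, so the same arguments carry over with $\diam{\setc{X}}$ replaced by $\diam{\setd{X}}$. Your extra care in verifying that the lemma's proof uses nothing beyond compactness of the domain and boundedness of the slope box is a sound elaboration of what the paper leaves implicit.
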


\begin{proof}
See Appendix~\ref{proof:disc conj via lerp}.
\end{proof}

\section{Problem statement and standard solution}
\label{sec:DP}

In this study, we consider the optimal control of discrete-time systems
\begin{equation} \label{eq:dyn}
x_{t+1} = \dyn(x_t,u_t) ,\quad t = 0, \ldots, T-1,
\end{equation}
where $\dyn : \R^n \times \R^m \ra \R^n$  describes the dynamics, and $T \in \N$ is the finite horizon. 
Here, we focus on deterministic dynamics. 
However, we note that the proposed algorithms in the subsequent sections can be extended to handle  stochastic dynamics with additive noise; see Appendix~\ref{subsec:extension stochastic} for more details. 
We also consider state and input constraints of the form
\begin{eqnarray} \label{eq:const}
\left\{
\begin{array}{lcl}
x_t \in \setc{X} \subset \R^n \  &\text{for}& \  t \in \{0, \ldots, T\}, \\
u_t \in \setc{U} \subset \R^m \  &\text{for}& \  t \in \{0, \ldots, T-1\}. \\
\end{array}
\right.
\end{eqnarray}
Let $\cost: \setc{X} \times \setc{U} \ra \Ru$ and $\cost_T:\setc{X} \ra \R$ be the stage and terminal cost functions, respectively. 
In particular, notice that we let the stage cost $\cost$ take $+\infty$ for $(x,u) \in \setc{X} \times \setc{U}$ so that it can embed the \emph{state-dependent input constraints}.
For an initial state~$x_0 \in \setc{X}$, the cost incurred by the state trajectory $\mathbf{x} = (x_0, \ldots,x_T)$ in response to the input sequence $\mathbf{u} = (u_0, \ldots,u_{T-1})$ is given by 
\begin{equation*}
J(x_0, \mathbf{u}) = \ssum_{t=0}^{T-1} \cost (x_t,u_t) + \cost_T(x_T).
\end{equation*}
The problem of interest is then to find an optimal control sequence~$\mathbf{u}\opt (x_0)$, that is, a solution to the minimization problem 
\begin{equation}\label{eq:opt cont prob}
J\opt (x_0) = \min_{\mathbf{u}} \left\{ J(x_0, \mathbf{u}) : \eqref{eq:dyn} \ \& \ \eqref{eq:const}   \right\}.
\end{equation} 
Throughout this study, we assume that the problem data satisfy the following conditions.

\begin{As}[Problem data]\label{As:general} 
The dynamics, constraints, and costs have the following properties:
\begin{enumerate}[label=(\roman*)]

\item \label{As:dyn} \textbf{\emph{Dynamics.}} The dynamics~$\dyn : \R^n \times \R^m \ra \R^n$ is locally Lipschitz continuous. 

\item \label{As:constr} \textbf{\emph{Constraints.}} The constraint sets $\setc{X}$ and $\setc{U}$ are compact. 
Moreover, the set of admissible inputs 
$
\setc{U}(x) \Let \{ u \in \setc{U}:  \cost(x,u) < +\infty, \ \dyn(x,u) \in \setc{X}\}
$ 
is nonempty for all $x \in \setc{X}$.

\item \label{As:cost} \textbf{\emph{Cost functions.}} The stage cost $\cost: \setc{X} \times \setc{U} \ra \Ru$ has a compact effective domain. Moreover, $\cost$ and $\cost_T$ are Lipschitz continuous.

\end{enumerate}
\end{As} 

The properties laid out in Assumption~\ref{As:general} imply that the set $\setc{U}(x)$ of admissible inputs is nonempty and compact, and the objective in~\eqref{eq:opt cont prob} is continuous 
(compactness of $\setc{U}(x)$ follows from compactness of $\dom (\cost)$ and $\setc{X}$, and continuity of $\dyn$). 
Hence, the optimal value in~\eqref{eq:opt cont prob} is achieved. 

To solve the problem described above using DP, we have to solve the Bellman equation
\begin{equation*}\label{eq:DP recursion}
J_{t}(x_t) = \min_{u} \left\{ \cost (x_t,u_t) + J_{t+1}(x_{t+1}):  \eqref{eq:dyn} \ \& \ \eqref{eq:const} \right\}, \quad x_t \in \setc{X},
\end{equation*}
backward in time $t = T-1, \ldots,0$, initialized by $J_T = \cost_T$. 
The iteration finally outputs $J_0 = J\opt$ \cite[Prop.~1.3.1]{Bertsekas05}. 
To simplify the exposition, let us embed the state and input constraints in the cost functions ($\cost$ and $J_t$) by extending them to infinity outside their effective domain. 
Let us also drop the time subscript $t$ and focus on a single step of the recursion by defining the DP operator
\begin{equation} \label{eq:DP op}
\dpo [J](x) \Let \min_{u} \left\{ \cost(x,u) + J\big(\dyn(x,u)\big) \right\}, \quad x \in \setc{X},
\end{equation}
so that $J_t = \dpo [J_{t+1}] = \dpo^{(T-t)} [J_T]$ for $t = T-1, \ldots,0$. 
Notice that the DP operation~\eqref{eq:DP op} requires solving an infinite number of optimization problems for the continuous state space $\setc{X}$. 
Except for a few cases with an available closed-form solution, the exact implementation of DP operation is impossible. 
A standard approximation scheme is then to incorporate function approximation techniques and solve~\eqref{eq:DP op} for a finite sample  (i.e., a discretization) of the underlying continuous state space. 
Precisely, we consider solving the optimization in~\eqref{eq:DP op} for a finite number of $x \in \setg{X}$, 
where $\setg{X} \subset \setc{X}$ is a grid-like discretization of the state space, to derive the output $\disc{[\dpo [J]]}:\setg{X}\ra \R$. 
This also means that the DP operator $\dpo$ now takes the discrete function $\disc{J}:\setg{X} \ra \R$ (the output of the previous iteration) as the input. 
Hence, along with the discretization of the state space, we also need to consider some form of function approximation for the cost-to-go function, that is, an extension $\lerp{\disc{J}}: \setc{X} \ra \R$ of the function~$\disc{J}: \setg{X} \ra \R$. 
What remains to be addressed is the issue of solving the minimization  
\begin{equation*}
\min_{u \in \setc{U}} \left\{\cost(x,u) + \lerp{\disc{J}}\big(\dyn(x,u)\big) \right\},
\end{equation*}
for each $x \in \setg{X}$, where the next step cost-to-go is approximated by the extension $\lerp{\disc{J}}$. 
This minimization problem is often a difficult, non-convex problem. 
Again, a common approximation involves enumeration over a proper discretization $\setd{U} \subset \setc{U}$ of the inputs space. 
We assume that the joint discretization of the state-input space is ``proper'' in the sense that the feasibility condition of Assumption~\ref{As:general}-\ref{As:constr} holds for the discrete state-input space:

\begin{As}[Feasible discretization]\label{As:feasible discrete}
The discrete state space~$\setg{X}\subset\setc{X}$ and input space~$\setd{U}\subset\setc{U}$ are such that $\setd{U}(x) \Let \setc{U}(x)\cap \setd{U}$ is nonempty for all $x \in \setg{X}$.
\end{As}

These approximations introduce some error which, under some regularity assumptions, depends on the discretization of the state and input spaces and the extension operation; see Proposition~\ref{prop:error d-DP}. 
Incorporating these approximations, we can introduce the \emph{discrete} DP (d-DP) operator as follows
\begin{equation} \label{eq:d-DP op}
\ddpo [\disc{J}](x) \Let \min_{u \in \setd{U}} \left\{ \cost(x,u) + \lerp{\disc{J}}\big(\dyn(x,u)\big) \right\}, \quad x \in \setg{X}.
\end{equation}
The d-DP operator/algorithm will be our benchmark for evaluating the performance of the alternative algorithms developed in this study. 
To this end, we discuss the time complexity of the d-DP operation in the following remark.  

\begin{Rem}[Complexity of d-DP] \label{rem:complexity d-DP}
Let the time complexity of a single evaluation of the extension operator~$\lerp{[\cdot]}$ in \eqref{eq:d-DP op} be of $\ord (E)$. 
Then, the time complexity of the d-DP operation~\eqref{eq:d-DP op} is of $\ord\big(XUE\big)$. 
\end{Rem} 

Let us clarify that the scheme described above essentially involves approximating a continuous-state/action MDP with a finite-state/action MDP, and then applying the (fitted) value iteration algorithm. 
In this regard, we note that $\ord(XU)$ is the best-existing time-complexity in the literature for finite MDPs; see, e.g., \cite{Bach19, Sidford18}. 
Indeed, regardless of the problem data, the d-DP algorithm involves solving a minimization problem for each $x \in \setg{X}$, via enumeration over $u \in \setd{U}$. 
However, as we will see in the subsequent sections, for certain classes of optimal control problems, it is possible to exploit the structure of the underlying continuous setup to avoid the minimization over the input and achieve a lower time complexity.

\section{From minimization in primal domain to addition in dual domain}
\label{sec:conj DP}

We now introduce a general class of problems that allows us to employ conjugate duality for the DP problem 
and hence propose an alternative path for implementing the corresponding operator. 
In particular, we show that the linearity of dynamics in the input is the key property in developing the alternative solution, 
whereby the minimization in the primal domain is transformed to an addition in the dual domain at the expense of three conjugate transforms. 
The problem class of interest is as follows:

\begin{Set}\label{Set:prob class I} 
The dynamics is input-affine, that is, $\dyn(x,u) = \dynx(x) + \dynu(x) \cdot u$, where $\dynx : \R^n \ra \R^n$ is the ``state'' dynamics, and $\dynu : \R^n \ra \R^{n\times m}$ is the ``input'' dynamics.
\end{Set}

\subsection{The d-CDP algorithm} \label{subsec:d-CPP alg 1}

Alternatively, we can approach the optimization problem in the DP operation~\eqref{eq:DP op} in the dual domain. 
To this end, let us fix $x \in \setc{X}$, and consider the following reformulation of the problem~\eqref{eq:DP op}:
\begin{equation*}
\dpo [J](x) =  \min_{u,z} \left\{ \cost(x,u) + J(z): z = \dyn(x,u)  \right\}.
\end{equation*}
Notice how for input-affine dynamics of Setting~\ref{Set:prob class I}, this formulation resembles the infimal convolution \eqref{eq:inf conv} (by taking $w_1 = z$ and $w_2 = u$, the equality constraint becomes $w_1 - \dynu(x)\cdot w_2 = \dynx(x)$). In this regard, consider the corresponding dual problem
\begin{align}\label{eq:CDP op}
\cdpo [J](x) \Let \max_{y} \ \min_{u , z} \ \{\cost(x,u) + J(z)+ \inner{y}{\dyn(x,u)-z}\}, 
\end{align}
where~$y \in \R^n$ is the dual variable. 
Indeed, for input-affine dynamics, we can derive an equivalent formulation for the dual problem~\eqref{eq:CDP op}, which forms the basis for the proposed algorithms. 

\begin{Lem}[CDP operator] \label{lem:CDP op}
Let 
\begin{equation} \label{eq:conj cost func alg 1}
\lftc{\cost_x}(v) \Let \max_{u} \big\{ \inner{v}{u} - \cost(x,u) \big\}, \quad v \in \R^m,
\end{equation}
denote the partial conjugate of the stage cost w.r.t. the input variable~$u$.
Then, for the input-affine dynamics of Setting~\ref{Set:prob class I}, the operator~$\cdpo$~\eqref{eq:CDP op} equivalently reads as
\begin{subequations} \label{eq:CDP op conj}
\begin{align}
& \phi_x(y) \Let \lftc{\cost_x}(-\dynu(x)\tr y) + \lftc{J}(y), & y \in \R^n, \label{eq:CDP op conj b} \\
& \cdpo [J](x) = \lftc{\phi_x} \big(\dynx(x)\big), &  x \in \setc{X}. \label{eq:CDP op conj a}
\end{align}
\end{subequations}
\end{Lem}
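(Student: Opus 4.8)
The plan is to prove the identity by manipulating the inner minimization in the definition~\eqref{eq:CDP op} of $\cdpo$ directly: I will use the input-affine structure of the dynamics to \emph{decouple} the minimizations over $u$ and $z$, and then recognize each of the three resulting pieces as a conjugate transform. Note that the claim is a purely algebraic reformulation, so no convexity of the data is needed here; the only structural ingredient is the affinity of $\dyn$ in $u$.

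First I fix $x \in \setg{X}$ together with the dual variable $y \in \R^n$, and substitute $\dyn(x,u) = \dynx(x) + \dynu(x)\cdot u$ into the objective. Since the coupling term $\inner{y}{\dyn(x,u)-z}$ is affine in $(u,z)$ and links $y$ to $u$ only through $\dynu(x)\cdot u$, the objective splits into a constant, a $u$-term, and a $z$-term, so the joint minimization factorizes:
\[
\min_{u,z} \big\{ \cost(x,u) + J(z) + \inner{y}{\dyn(x,u)-z} \big\}
= \inner{y}{\dynx(x)} + \min_u \big\{ \cost(x,u) + \inner{y}{\dynu(x) u} \big\} + \min_z \big\{ J(z) - \inner{y}{z} \big\}.
\]

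Next I rewrite each inner minimization as a conjugate. Using the adjoint identity $\inner{y}{\dynu(x) u} = \inner{\dynu(x)\tr y}{u}$, the minimization over $u$ becomes $-\max_u \{ \inner{-\dynu(x)\tr y}{u} - \cost(x,u) \} = -\lftc{\cost_x}(-\dynu(x)\tr y)$ by the definition~\eqref{eq:conj cost func alg 1} of the partial conjugate, while the minimization over $z$ is $-\max_z \{ \inner{y}{z} - J(z) \} = -\lftc{J}(y)$. Substituting these back shows that the bracketed inner value equals $\inner{y}{\dynx(x)} - \phi_x(y)$, with $\phi_x$ exactly as in~\eqref{eq:CDP op conj b}. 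The outer maximization over $y$ then gives
\[
\cdpo [J](x) = \max_y \big\{ \inner{\dynx(x)}{y} - \phi_x(y) \big\} = \lftc{\phi_x}\big(\dynx(x)\big),
\]
which is precisely the conjugate~\eqref{eq:conj} of $\phi_x$ evaluated at $\dynx(x)$, yielding~\eqref{eq:CDP op conj a}.

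I do not expect a genuine obstacle: every step is an exact identity obtained from the definitions, and the argument is essentially bookkeeping of which variable each term depends on. The one point deserving care is the adjoint/transpose step $\inner{y}{\dynu(x) u} = \inner{\dynu(x)\tr y}{u}$, which is what dictates the argument $-\dynu(x)\tr y$ appearing in $\phi_x$; keeping the sign and the transpose consistent is the only place where a slip could occur.
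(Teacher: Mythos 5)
Your proof is correct and follows essentially the same route as the paper's: substitute the input-affine dynamics into the dual objective, exploit the separability in $(u,z)$ to split the inner minimization into a constant term plus two pieces recognized as $-\lftc{\cost_x}(-\dynu(x)\tr y)$ and $-\lftc{J}(y)$, and identify the outer maximization as $\lftc{\phi_x}\big(\dynx(x)\big)$. The only cosmetic difference is that you make the decoupling of the two minimizations explicit as a separate step, whereas the paper performs it in a single chain of equalities.
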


\begin{proof}
See Appendix~\ref{proof:CDP op}.
\end{proof}

As we mentioned, the construction above suggests an alternative path for computing the output of the DP~operator through the conjugate domain. 
We call this alternative approach \emph{conjugate} DP (CDP). 
Figure~\ref{fig:CDP1} characterizes this alternative path schematically. 
Numerical implementation of CDP operation requires the computation of conjugate functions. 
In particular, as shown in Figure~\ref{fig:CDP1}, CDP operation involves three conjugate transforms. 
For now, we assume that the partial conjugate $\lftc{\cost_x}$ of the stage cost in~\eqref{eq:conj cost func alg 1} is analytically available. 
We note however that one can also consider a numerical scheme to approximate this
conjugation; see Appendix~\ref{subsec:extension num conj} for further details.

\begin{As}[Conjugate of stage cost]\label{As:conj cost func alg 1}
The conjugate function~$\lftc{\cost_x}$~\eqref{eq:conj cost func alg 1} is analytically available. 
That is, the time complexity of evaluating $\lftc{\cost_x} (v)$ for each $v \in \R^m$ is of $\ord (1)$.
\end{As}

The two remaining conjugate operations of the CDP path in Figure~\ref{fig:CDP1} are handled numerically. 
In particular, we again take a sample-based approach and compute $\cdpo [J]$ for a finite number of states $x \in \setg{X}$. 
To be precise, for a grid-like discretization~$\setg{Y}$ of the dual domain, 
we employ LLT to compute $\lftdd{J}: \setg{Y} \ra \R$ using the data points $\disc{J}: \setg{X} \ra \R$. 
Proper construction of $\setg{Y}$ will be discussed shortly.
Now, let
$$
\disc{\psi}_x(y) \Let \lftc{\cost_x}(-\dynu(x)\tr y) + \lftdd{J}(y), \quad y \in \setg{Y},
$$
be a discrete \emph{approximation} of $\phi_x$ in~\eqref{eq:CDP op conj b}. 
The approximation stems from the fact that we used the discrete conjugate~$\lftd{J}$ instead of the conjugate~$\lftc{J}$. 
Using this object, we can also handle the last conjugate transform in Figure~\ref{fig:CDP1} numerically, and approximate $\lftc{\phi_x}\big(\dynx(x)\big)$ in~\eqref{eq:CDP op conj a} by
$$
\lftd{\psi_x} \big(\dynx(x)\big) = \max_{y \in \setg{Y}} \ \{ \inner{\dynx(x)}{y} - \disc{\psi_x}(y) \},
$$
via enumeration over~$y \in \setg{Y}$. 
Based on the construction described above, we can introduce the \emph{discrete} CDP (d-CDP) operator as follows
\begin{subequations} \label{eq:d-CDP op}
\begin{align}
&\lftdd{J} (y) = \max_{x \in \setg{X}}\left\{\inner{y}{x} - \disc{J}(x) \right\}, & y \in \setg{Y}, \\
&\disc{\psi}_x(y) = \lftc{\cost_x}(-\dynu(x)\tr y) + \lftdd{J}(y), & y \in \setg{Y}, \label{eq:d-CDP op b} \\
& \dcdpo [\disc{J}](x) \Let \lftd{\psi_x} \big(\dynx(x)\big), &  x \in \setg{X}. \label{eq:d-CDP op a}
\end{align}
\end{subequations}
Algorithm~\ref{alg:d-CDP general} provides the pseudo-code for the numerical implementation of the d-CDP operation~\eqref{eq:d-CDP op}. 
In the next subsection, we analyze the complexity and error of Algorithm~\ref{alg:d-CDP general}. 

\begin{algorithm}[t]
\begin{small}
   \caption{Implementation of the d-CDP operator~\eqref{eq:d-CDP op} for Setting~\ref{Set:prob class I}.} 
   \label{alg:d-CDP general}
\begin{algorithmic}[1]
	\REQUIRE dynamics~$\dynx: \R^n \ra \R^n, \ \dynu : \R^n \ra \R^{n\times m}$; \\
	discrete cost-to-go~(at $t+1$) $\disc{J}: \setg{X} \ra \R$;	\\
	conjugate of stage cost~$\lftc{\cost_x}: \R^m \ra \R$ for $x \in \setg{X}$;  \\
	grid $\setg{Y} \subset \R^n$; 
	\ENSURE discrete cost-to-go (at $t$)~$\dcdpo [\disc{J}](x): \setg{X} \ra \R$.
  	
	\vspace{.2cm}
  	
    \STATE use LLT to compute $\lftdd{J}:\setg{Y} \ra \R$ from $\disc{J}: \setg{X} \ra \R$; \label{line_alg_g:LLT of J}
    
    \vspace{.1cm}
    
    \FOR{each $x \in \setg{X}$} \label{line_alg_g:for loop over x}

    	\STATE  $\disc{\psi}_x(y) \gets \lftc{\cost_x}(-\dynu(x)\tr y) + \lftdd{J}(y)$ for $y \in \setg{Y}$; \label{line_alg_g:h}

  		\STATE $\dcdpo [\disc{J}](x) \gets \max\limits_{y \in \setg{Y}}\{\inner{\dynx(x)}{y} - \disc{\psi}_x(y) \}$. \label{line_alg_g:output}

  	\ENDFOR

\end{algorithmic}
\end{small}
\end{algorithm}

\subsection{Analysis of d-CDP algorithm}\label{subsec:analys alg 1} 

We begin with the computational complexity of Algorithm~\ref{alg:d-CDP general}. 

\begin{Thm}[Complexity of d-CDP Algorithm~\ref{alg:d-CDP general}] \label{thm:complexity d-CDP Alg 1}
Let Assumptions~\ref{As:grid size LLT} and~\ref{As:conj cost func alg 1} hold. 
Then, the implementation of the d-CDP operator~\eqref{eq:d-CDP op} via Algorithm~\ref{alg:d-CDP general} requires $\ord (XY)$ operations.  
\end{Thm}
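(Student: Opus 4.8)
The plan is to count the operations in Algorithm~\ref{alg:d-CDP general} line by line, exploiting the complexity of LLT from Remark~\ref{rem:complexity of LLT} together with Assumption~\ref{As:grid size LLT}, and the fact that each evaluation of $\lftc{\cost_x}$ is of $\ord(1)$ by Assumption~\ref{As:conj cost func alg 1}. The algorithm has a single pre-loop step (Line~\ref{line_alg_g:LLT of J}) followed by a \textbf{for} loop over $x\in\setg{X}$ (Lines~\ref{line_alg_g:for loop over x}--\ref{line_alg_g:output}), so the total cost is the cost of the LLT step plus $X$ times the per-iteration cost of the loop body.

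First I would treat Line~\ref{line_alg_g:LLT of J}: computing $\lftdd{J}:\setg{Y}\ra\R$ from $\disc{J}:\setg{X}\ra\R$ via LLT. By Remark~\ref{rem:complexity of LLT} and Assumption~\ref{As:grid size LLT} (equal cardinality per dimension of the primal grid $\setg{X}$ and dual grid $\setg{Y}$), this single LLT call costs $\ord(X+Y)$. Next I would bound the body of the loop for a fixed $x$. Line~\ref{line_alg_g:h} forms $\disc{\varphi}_x(y)=\lftc{\cost_x}(-\dynu(x)\tr y)+\lftdd{J}(y)$ for each of the $Y$ grid points $y\in\setg{Y}$; since $\lftdd{J}(y)$ is already tabulated from Line~\ref{line_alg_g:LLT of J} and each evaluation of $\lftc{\cost_x}$ is of $\ord(1)$ by Assumption~\ref{As:conj cost func alg 1} (the matrix--vector product $\dynu(x)\tr y$ being absorbed into the $\ord(1)$ convention on analytically available functions), this costs $\ord(Y)$. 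Line~\ref{line_alg_g:output} evaluates $\max_{y\in\setg{Y}}\{\inner{\dynx(x)}{y}-\disc{\varphi}_x(y)\}$ by enumeration over the $Y$ grid points, which is again $\ord(Y)$. Hence the loop body is of $\ord(Y)$ per $x$.

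Summing over the loop gives $\ord(XY)$ for Lines~\ref{line_alg_g:for loop over x}--\ref{line_alg_g:output}, and adding the pre-loop LLT cost yields a total of $\ord(X+Y)+\ord(XY)=\ord(XY)$, since the $XY$ term dominates $X+Y$. This establishes the claimed bound.

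I do not anticipate a genuine mathematical obstacle here; the argument is essentially a careful bookkeeping of the per-line costs. The one point requiring care is the justification that Line~\ref{line_alg_g:output} is the \emph{only} place the last conjugate transform is carried out, so that it is not re-using or re-running an LLT call but is instead a plain enumeration of cost $\ord(Y)$ per state (one cannot invoke LLT here because $\dynx(x)$ is a single arbitrary point, not a grid). Likewise I would make explicit that $\lftdd{J}$ is computed once, outside the loop, rather than $X$ times inside it, which is precisely what keeps the LLT contribution additive rather than multiplicative. With those two observations stated, the summation is immediate.
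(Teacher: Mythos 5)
Your proof is correct and follows essentially the same line-by-line accounting as the paper's own argument: the pre-loop LLT costs $\ord(X+Y)$ by Remark~\ref{rem:complexity of LLT}, each loop iteration costs $\ord(Y)$ for forming $\disc{\varphi}_x$ (via Assumption~\ref{As:conj cost func alg 1}) and $\ord(Y)$ for the enumeration maximization, giving $\ord(XY)$ overall. Your added remarks — that $\lftdd{J}$ is computed once outside the loop and that the final conjugation must be plain enumeration since $\dynx(x)$ is a single point rather than a grid — are correct clarifications the paper leaves implicit.
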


\begin{proof}
See Appendix~\ref{proof:complexity d-CDP Alg 1}.
\end{proof}

Recall that the time complexity of the d-DP operator~\eqref{eq:d-DP op} is of $\ord ( XUE)$; see Remark~\ref{rem:complexity d-DP}. 
Comparing this complexity to the one reported in Theorem~\ref{thm:complexity d-CDP Alg 1}, 
points to a basic characteristic of the proposed approach: 
CDP avoids the minimization over the control input in DP and casts it as a simple addition in the dual domain at the expense of three conjugate transforms. 
Consequently, the time complexity is transferred from the primal input domain~$\setd{U}$ to the dual state domain~$\setg{Y}$. 
This observation implies that if $Y < UE$, then d-CDP is expected to computationally outperform d-DP. 

We now consider the error of Algorithm~\ref{alg:d-CDP general} w.r.t. the DP operator~\eqref{eq:DP op}. 
Let us begin with presenting an alternative representation of the d-CDP operator that sheds some light on the main sources of error. 

\begin{Prop}[d-CDP reformulation] \label{prop:d-CDP op}
Assume that the stage cost $\cost: \setc{X} \times \setc{U} \ra \Ru$ is convex in the input variable. 
Then, the d-CDP operator~\eqref{eq:d-CDP op} equivalently reads as
\begin{equation} \label{eq:d-CDP op alt}
\dcdpo [\disc{J}](x) = \min_{u} \left\{ \cost(x,u) + \bcdd{J} \big( \dyn(x,u) \big) \right\}, \quad x \in \setg{X},
\end{equation}
where $\bcdd{J}$ is the discrete biconjugate of $J$, using the primal grid $\setg{X}$ and the dual grid $\setg{Y}$.
\end{Prop}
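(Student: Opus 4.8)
The plan is to unfold the three discrete conjugations encoded in $\dcdpo$ into a single nested extremum and then to read the claimed identity as a strong-duality (minimax) statement that the convexity built into Setting~\ref{Set:prob class I} should resolve. First I would write out $\dcdpo[\disc{J}](x) = \lftd{\varphi_x}\big(\dynx(x)\big) = \max_{y \in \setg{Y}} \{ \inner{\dynx(x)}{y} - \disc{\varphi}_x(y) \}$ and substitute $\disc{\varphi}_x(y) = \lftc{\cost_x}(-\dynu(x)\tr y) + \lftdd{J}(y)$. Each of the two subtracted terms is itself a (discrete) conjugate, hence turns into an infimum: the definition~\eqref{eq:conj cost func alg 1} of the partial conjugate gives $-\lftc{\cost_x}(-\dynu(x)\tr y) = \min_{u} \{ \cost(x,u) + \inner{y}{\dynu(x)u} \}$, while the LLT output coincides with the discrete conjugate on the grid, so $\lftdd{J}(y) = \lftd{J}(y) = \max_{z \in \setg{X}} \{ \inner{y}{z} - J(z) \}$ for $y \in \setg{Y}$ and therefore $-\lftdd{J}(y) = \min_{z \in \setg{X}} \{ J(z) - \inner{y}{z} \}$. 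Collecting the two independent infima and using $\dyn(x,u) = \dynx(x) + \dynu(x)u$, I obtain
\begin{equation*}
\dcdpo[\disc{J}](x) = \max_{y \in \setg{Y}} \ \min_{u, \, z \in \setg{X}} \big\{ \cost(x,u) + J(z) + \inner{y}{\dyn(x,u) - z} \big\}.
\end{equation*}

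Next I would expand the target right-hand side. By the definition~\eqref{eq:biconj doub disc} of the discrete biconjugate, $\bcdd{J}\big(\dyn(x,u)\big) = \max_{y \in \setg{Y}} \min_{z \in \setg{X}} \{ \inner{\dyn(x,u) - z}{y} + J(z) \}$, so that
\begin{equation*}
\min_{u} \big\{ \cost(x,u) + \bcdd{J}(\dyn(x,u)) \big\} = \min_{u} \ \max_{y \in \setg{Y}} \ \min_{z \in \setg{X}} \big\{ \cost(x,u) + J(z) + \inner{y}{\dyn(x,u) - z} \big\}.
\end{equation*}
Comparing the two displays, the inner minimization over $z \in \setg{X}$ does not involve $u$ and reduces both sides to the common kernel $H(y,u) \Let \cost(x,u) + \inner{y}{\dyn(x,u)} - \lftdd{J}(y)$; hence the whole claim collapses to the interchange of $\min_u$ and $\max_{y \in \setg{Y}}$, i.e.\ to the minimax identity $\max_{y \in \setg{Y}} \min_u H(y,u) = \min_u \max_{y \in \setg{Y}} H(y,u)$.

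This interchange is the crux and the step I expect to be the main obstacle. The inequality $\dcdpo[\disc{J}](x) \le \min_u \{ \cost(x,u) + \bcdd{J}(\dyn(x,u)) \}$ is immediate from weak duality. For the reverse I would exploit the convex structure of Setting~\ref{Set:prob class I}: for fixed $y$ the map $u \mapsto H(y,u)$ is convex---the stage cost is jointly convex and $\inner{y}{\dynu(x)u}$ is linear in $u$---and is minimized over the nonempty, compact, convex admissible set $\setc{U}(x)$ (Setting~\ref{Set:prob class I}-\ref{As:constr I} together with Assumption~\ref{As:feasible discrete}), whereas for fixed $u$ the map $y \mapsto H(y,u)$ is affine. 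Since $\setg{Y}$ is a finite, hence non-convex, set, a minimax theorem cannot be applied to it directly; the remedy I would use is to convexify the outer maximization by writing $\max_{y \in \setg{Y}} H(y,u) = \max_{\lambda} \sum_{y \in \setg{Y}} \lambda_y H(y,u)$ over the probability simplex on $\setg{Y}$ (valid because the mixed objective is linear in $\lambda$ and attains its maximum at a vertex), so that the outer variable ranges over a compact convex set and the objective is convex in $u$ and linear in $\lambda$, after which Sion's minimax theorem delivers the interchange.

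The delicate part---and the one I would scrutinize most carefully---is the final reduction from mixed back to pure strategies: one must verify that the saddle value produced over the simplex is actually attained at a vertex, i.e.\ at some $y \in \setg{Y}$, so that it equals $\dcdpo[\disc{J}](x)$ rather than a strictly larger mixed value. It is precisely here that the compactness of $\setc{U}(x)$ and the convexity of the stage cost must be used in full, and I would expect the argument to hinge on exhibiting an explicit saddle point $(y\opt, u\opt) \in \setg{Y} \times \setc{U}(x)$ of $H$; conversely, if no such pure saddle exists, equality could only be asserted under an additional condition on how well $\setg{Y}$ captures the active dual slopes, so I would pay particular attention to whether the stated hypotheses already guarantee this.
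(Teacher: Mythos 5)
Your unfolding of the three conjugations and your identification of the crux coincide exactly with the paper's own proof: the paper likewise reduces everything to the kernel
\begin{equation*}
H(y,u) \Let \cost(x,u) + \inner{y}{\dyn(x,u)} - \lftdd{J}(y),
\end{equation*}
and to the interchange of $\max_{y \in \setg{Y}}$ with $\min_{u}$. At the point where you stop, the paper claims to finish: it asserts that $H$ is convex in $u$ with compact domain $\dom\cost(x,\cdot)$ and ``Ky Fan concave (concave-like) in $y$, which follows from the convexity of $\lftd{J}$,'' and then invokes Ky Fan's minimax theorem to swap the operators. Your reluctance to accept such a step is well founded. Concave-likeness on $\setg{Y}$ requires, for every $y_1,y_2 \in \setg{Y}$ and $t \in (0,1)$, a point $y_3 \in \setg{Y}$ with $H(y_3,u) \geq t H(y_1,u) + (1-t)H(y_2,u)$ for \emph{all} $u$; convexity of $\lftd{J}$ produces such a dominating point only at $y_3 = t y_1 + (1-t) y_2$, which does not belong to the finite grid. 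This is precisely the obstruction you met from the mixed-strategy side: $\lambda \mapsto \min_u \sum_{y} \lambda_y H(y,u)$ is concave, so its maximum over the simplex is in general attained in the interior rather than at a vertex, and no pure saddle point need exist.

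Moreover, the gap is genuine and cannot be closed under the stated hypotheses, because the identity itself can fail. Take $n=m=1$, $\setc{X}=\setc{U}=[-1,1]$, $\dynx \equiv 0$, $\dynu \equiv 1$ (so $\dyn(x,u)=u$), $\cost(x,u)=u/2$ on $\setc{X}\times\setc{U}$ and $+\infty$ outside (all conditions of Setting~\ref{Set:prob class I} hold), $\setg{X}=\{-1,0,1\}$, $\disc{J}\equiv 0$, and $\setg{Y}=\{-1,1\}$. Then $\lftdd{J}\equiv 1$ on $\setg{Y}$ and $\lftc{\cost_x}(v)=|v-1/2|$, so $\disc{\varphi}_x(y)=|y+1/2|+1$ and $\dcdpo[\disc{J}](x)=\max\{-3/2,\,-5/2\}=-3/2$, whereas $\bcdd{J}(w)=|w|-1$ gives $\min_{u\in\setc{U}}\{u/2+\bcdd{J}(u)\}=-1$. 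The two sides of~\eqref{eq:d-CDP op alt} differ (only weak duality holds), and the reason is the one you predicted: equality requires $\setg{Y}$ to contain a maximizer over $\R^n$ of $y \mapsto \min_u H(y,u)$, i.e.\ an optimal dual multiplier (here any $y\in[-1/2,0]$, which the grid misses) --- the same quantity $\dist\big(\partial \dpo[J](x),\setg{Y}\big)$ that governs the one-sided error bound in Theorem~\ref{thm:error d-CDP Alg 1}. So your proposal is incomplete, but at the only place where it is incomplete the paper's own argument is also invalid: you identified a real flaw rather than overlooking an available argument, and the proposition should be read as holding only up to a dual-discretization error, or under the additional condition that $\setg{Y}$ captures the active dual slopes.
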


\begin{proof}
See Appendix~\ref{proof:d-CDP op}.
\end{proof}

Comparing the representations~\eqref{eq:DP op} and \eqref{eq:d-CDP op alt}, we note that the d-CDP operator~$\dcdpo$ differs from the DP operator~$\dpo$ in that it uses $\bcdd{J}$ as an approximation of $J$. 
This observation points to two main sources of error in the proposed approach, namely, dualization and discretization. 
Indeed, $\dcdpo$ is a discretized version of the dual problem~\eqref{eq:CDP op}. 
Regarding the dualization error, we note that the d-CDP operator is ``blind'' to non-convexity; 
that is, it essentially replaces the cost-to-go~$J$ by its convex envelope (the greatest convex function that supports $J$ from below). 
The discretization error, on the other hand, depends on the choice of the finite primal and dual domains $\setg{X}$ and $\setg{Y}$. 
In particular, by a proper choice of $\setg{Y}$, it is indeed possible to eliminate the corresponding error due to discretization of the dual domain. 
To illustrate, let $\disc{J}$ be a one-dimensional, discrete, convex-extensible function with domain~$\setg{X} = \{x^i\}_{i=1}^N \subset \R$, where $x^i < x^{i+1}$. (Recall that by convex-extensible, we mean that $\disc{J}$ can be extended to convex function $\lerp{\disc{J}}$ such that $\lerp{\disc{J}}(x) = \disc{J}(x)$ for all $x \in \setg{X}$).
Also, choose $\setg{Y} = \{y^i\}_{i=1}^{N-1} \subset \R$ with $y^i = \frac{\disc{J}(x^{i+1})-\disc{J}(x^i)}{x^{i+1}-x^i}$ as the discrete dual domain. 
Then, for all $x \in \co (\setg{X}) = [x^1,x^N]$, we have $\bcdd{J}(x) = \llerp{\disc{J}}(x)$, i.e., the LERP extension. 
Hence, the only source of error under this choice of $\setg{Y}$ is the discretization of the primal state space (i.e., approximation of the true $J$ via $\llerp{\disc{J}}$). 
However, a similar construction of $\setg{Y}$ in dimensions $n \geq 2$ can lead to dual grids of size $Y = \ord(X^n)$, which is computationally impractical; see Theorem~\ref{thm:complexity d-CDP Alg 1}. 
The following result provides us with specific bounds on the discretization error that point to a more practical way for construction of $\setg{Y}$.

\begin{Thm}[Error of d-CDP Algorithm~\ref{alg:d-CDP general}] \label{thm:error d-CDP Alg 1}
Consider the DP operator~$\dpo$~\eqref{eq:DP op} and the implementation of the d-CDP operator~$\dcdpo$~\eqref{eq:d-CDP op} via Algorithm~\ref{alg:d-CDP general}. 
Assume that $\cost: \setc{X} \times \setc{U} \ra \Ru$ is convex in the input variable. 
Also, assume that $J:\setc{X} \ra \R$ is a Lipschitz continuous, convex function. 
Then, for each $x \in \setg{X}$, it holds that
\begin{equation} \label{eq:err bound d-CDP Alg 1}
- e_2 \leq \dpo [J](x) - \dcdpo [\disc{J}](x) \leq e_1(x),
\end{equation}
where
\begin{eqnarray} \label{eq:err terms d-CDP Alg 1}
\begin{array}{l}
e_1(x) = \big[ \norm{\dynx(x)} + \norm{\dynu(x)} \cdot \diam{\setc{U}} + \diam{\setc{X}} \big] \cdot \dist\big(\partial  \dpo [J] (x),\setg{Y}\big), \\
e_2 =   \left[ \diam{\setg{Y}} +  \lip(J) \right] \cdot \dish (\setc{X}, \setg{X}).
\end{array}
\end{eqnarray}
\end{Thm}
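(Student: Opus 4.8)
The plan is to reduce the two-sided estimate to a comparison of two \emph{dual} (maximization) problems and then to isolate the dual-grid and primal-grid discretization errors separately. Since $J$ is convex and Setting~\ref{Set:prob class I} holds, strong duality closes the gap in~\eqref{eq:CDP op}, so that Lemma~\ref{lem:CDP op} gives $\dpo[J](x) = \cdpo[J](x) = \max_{y\in\R^n} F(y)$ with $F(y) \Let \inner{\dynx(x)}{y} - \lftc{\cost_x}(-\dynu(x)\tr y) - \lftc{J}(y)$. On the other hand, the very same computation with $\lftc{J}$ replaced by the discrete conjugate $\lftd{J}$ (the only approximation entering the operator) yields $\dcdpo[\disc{J}](x) = \max_{y\in\setg{Y}} \tilde{F}(y)$, where $\tilde{F}$ is obtained from $F$ by this single substitution; since $\lftd{J}\le\lftc{J}$ pointwise by Lemma~\ref{lem:conj vs. d-conj}, we have $\tilde{F}\ge F$ everywhere. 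The total error is therefore driven by exactly two effects: restricting the maximization from $\R^n$ to the grid $\setg{Y}$, and replacing $\lftc{J}$ by $\lftd{J}$.

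For the upper bound I would fix a maximizer $y\opt \in \argmax_{\R^n} F$ and let $\hat{y}\in\setg{Y}$ be a nearest grid point, so that $\norm{y\opt - \hat{y}} = \dist(y\opt,\setg{Y})$. Using $\max_{\setg{Y}}\tilde{F} \ge \tilde{F}(\hat{y}) \ge F(\hat{y})$, one gets $\dpo[J](x) - \dcdpo[\disc{J}](x) \le F(y\opt) - F(\hat{y})$. I would then bound the three pieces of $F(y\opt)-F(\hat{y})$ separately: the linear term contributes the factor $\norm{\dynx(x)}$, the input-cost term contributes $\norm{\dynu(x)}\cdot\lip(\lftc{\cost_x})$, and the cost-to-go term contributes $\lip(\lftc{J})$, each multiplied by $\norm{y\opt-\hat{y}}$. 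Because the subgradients of $\lftc{\cost_x}$ and $\lftc{J}$ are the maximizing inputs and states, which lie in $\setc{U}$ and $\setc{X}$, the two Lipschitz moduli are bounded by $\diam{\setc{U}}$ and $\diam{\setc{X}}$; choosing $y\opt$ within $\argmax F$ closest to $\setg{Y}$ then delivers precisely $e_1(x)$.

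For the lower bound I would exploit $\tilde{F}\ge F$ together with $\dpo[J](x) = \max_{\R^n}F \ge \max_{\setg{Y}}F$, so that $\dcdpo[\disc{J}](x) - \dpo[J](x) \le \max_{\setg{Y}}\tilde{F} - \max_{\setg{Y}}F \le \max_{y\in\setg{Y}}\big(\tilde{F}(y)-F(y)\big) = \max_{y\in\setg{Y}}\big(\lftc{J}(y)-\lftd{J}(y)\big)$, using the elementary fact $\max f - \max g \le \max(f-g)$. The primal-discretization estimate~\eqref{eq:conj vs. d-conj II} of Lemma~\ref{lem:conj vs. d-conj}, applied to $h = J$ on the grid $\setg{X}$ (here the compactness of $\setc{X}$ and Lipschitz continuity of $J$ are exactly what is required), bounds each term by $[\norm{y}+\lip(J)]\cdot\dish(\setc{X},\setg{X})$; bounding $\norm{y}$ over $\setg{Y}$ by $\diam{\setg{Y}}$ then yields the uniform bound $-e_2$.

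The main obstacle is the clean separation of the two error sources together with the bookkeeping that converts the Lipschitz moduli of the partial conjugates into the diameters $\diam{\setc{U}}$, $\diam{\setc{X}}$, $\diam{\setg{Y}}$ — this relies on the subgradients of $\lftc{\cost_x}$ and $\lftc{J}$ residing in $\setc{U}$ and $\setc{X}$, and on $\setg{Y}$ being centred so that its points are controlled by its diameter. A secondary but genuinely delicate point is the identification of the optimal dual variable $y\opt$, which strong duality characterizes as a subgradient of $J$ at the optimal successor state $\dyn(x,u\opt)$, with the set $\partial\dpo[J](x)$ appearing in $e_1(x)$; making this correspondence precise (and noting that the estimate is vacuous whenever this set is empty) is where I expect the argument to require the most care.
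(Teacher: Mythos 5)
Your proposal is correct and takes essentially the same route as the paper's proof: zero duality gap from convexity of $J$, followed by a split of the error into the dual-grid restriction error (your fix-a-maximizer-and-take-the-nearest-grid-point estimate, with the $\diam{\setc{U}}$ and $\diam{\setc{X}}$ bookkeeping for $\lip(\phi_x)$, is exactly the bound~\eqref{eq:conj vs. d-conj I} of Lemma~\ref{lem:conj vs. d-conj} applied to $\phi_x$, which is how the paper obtains $e_1$) and the primal-discretization error (bound~\eqref{eq:conj vs. d-conj II} applied to $J$ and maximized over $y \in \setg{Y}$, giving $e_2$, identical to your lower-bound argument). The correspondence you flag as the delicate point --- relating the dual maximizer set $\argmax F = \partial \lftc{\phi_x}\big(\dynx(x)\big)$ to the set $\partial \dpo[J](x)$ appearing in $e_1(x)$ --- is precisely the step the paper also dispatches in one line, via the identity $\lftc{\phi_x}\big(\dynx(\cdot)\big) = \dpo[J](\cdot)$, so your argument is complete to exactly the same degree as the paper's.
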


\begin{proof}
See Appendix~\ref{proof:error d-CDP Alg 1}.
\end{proof}

Notice how the two terms $e_1$ and $e_2$ capture the errors due to the discretization of the dual state space ($\setc{Y}$) and the primal state space ($\setc{X}$), respectively. 
In particular, the first error term suggests that we choose $\setg{Y}$ such that $\setg{Y} \cap \partial  \dpo [J] (x)  \neq \emptyset$ for all $x\in \setg{X}$. 
Even if we had access to $\dpo [J]$, satisfying such a condition could again lead to dual grids of size $Y = \ord(X^n)$. 
A more realistic objective is then to choose~$\setg{Y}$ such that $\co (\setg{Y}) \cap \partial  \dpo [J] (x) \neq \emptyset$ for all $x \in \setg{X}$. 
With such a construction, $\dist\big(\partial  \dpo [J] (x),\setg{Y}\big)$ and hence $e_1$ decrease by using finer grids for the dual domain. 
The latter condition is satisfied if $\co(\setg{Y}) \supseteq \setc{L} (\dpo [J])$.
Hence, we need to approximate ``the range of slopes'' of the function $\dpo [J]$ for $x \in \setg{X}$. 
Notice, however, that we do not have access to $\dpo [J]$ since it is the \emph{output} of the d-CDP operation in Algorithm~\ref{alg:d-CDP general}. 
What we have at our disposal as \emph{inputs} are the stage cost~$\cost$ and the next step (discrete) cost-to-go~$\disc{J}$. 
A coarse way to approximate the range of slopes of $\dpo [J]$ is then to use the extrema of the functions $\cost$ and $\disc{J}$, and the diameter of $\setg{X}$ in each dimension. 
The following remark explains such an approximation for the construction of $\setg{Y}$. 

\begin{Rem}[Construction of $\setg{Y}$]\label{rem:constr Y}
Let $\cost^M = \max_{x,u} \cost(x,u)$ and $\cost^m = \min_{x,u} \cost(x,u)$. 
Compute $J^M = \max_{x \in \setg{X}} \disc{J}(x)$ and $J^m = \min_{x \in \setg{X}} \disc{J}(x)$, 
and then choose $\setg{Y} = \Pi_{i=1}^{n} \setg{Y}_{i} \subset \R^n$ such that for each dimension $i =1,\ldots,n$, 
we have $$ \pm \alpha \cdot \frac{\cost^M+J^M-\cost^m-J^m}{\diam{\setg{X}_i}} \in \co(\setg{Y}_i).$$ 
Above, $\alpha > 0$ is a scaling factor mainly depending on the dimension $n$ of the state space. 
Such a construction of $\setg{Y}$ requires $\ord (X)$ operations \emph{per iteration} for computing $J^M$ and $J^m$ via enumeration over $x \in \setg{X}$.
\end{Rem}
  
\section{From quadratic complexity to linear complexity} \label{sec:reduced complexity}

In this section, we focus on a specific subclass of the optimal control problems considered in this study. 
In particular, we exploit the problem structure in this subclass to reduce the computational cost of the d-CDP algorithm. 
In this regard, a closer look to Algorithm~\ref{alg:d-CDP general} reveals a computational bottleneck in its numerical implementation: The computation of the objects $\disc{\psi}_x : \setg{Y} \ra \R,\ x \in \setg{X}$, and their conjugates which requires working in the product space $\setg{X} \times \setg{Y}$. 
This step is indeed the dominating factor in the time complexity of $\ord (XY)$ of Algorithm~\ref{alg:d-CDP general}; see Appendix~\ref{proof:complexity d-CDP Alg 1} for the proof of Theorem~\ref{thm:complexity d-CDP Alg 1}. 
Hence, if the structure of the problem allows for the complete decomposition of these objects, then a significant reduction in the time complexity is achievable. 
This is indeed possible for problems with separable data:  

\begin{Set}\label{Set:prob class II} 
\textbf{(i)}~The dynamics is input-affine with state-independent input dynamics, i.e.,  
 $\dyn(x,u) = \dynx(x) + B \cdot u$, where $\dynx : \R^n \ra \R^n$ and $B \in \R^{n \times m}$. 
\textbf{(ii)}~The stage cost is separable in state and input, i.e., $\cost(x,u) = \costx(x)+\costu(u)$, where $\costx: \setc{X} \ra \R$ and $\costu: \setc{U} \ra \R$ are the state and input costs, respectively. 
\end{Set}

Note that the separability of the stage cost $C$ implies that the constraints are also separable, i.e, there are no state-dependent input constraints. 

\subsection{Modified d-CDP algorithm} \label{subsec:d-CDP algo 2}

For the separable cost of Setting~\ref{Set:prob class II}, the state cost ($\costx$) can be taken out of the minimization in the DP operator~\eqref{eq:DP op} as follows
\begin{align} \label{eq:DP op separ}
\dpo [J](x) = \costx(x) + \min_{u}\left\{ \costu(u) + J \big(\dyn(x,u)\big) \right\}, \quad x \in \setc{X}.
\end{align}
Following the same dualization and then discretization procedure described in Section~\ref{subsec:d-CPP alg 1}, we can derive the corresponding d-CDP operator
\begin{subequations} \label{eq:d-CDP op separ}
\begin{align} 
&\lftdd{J} (y) = \max_{x \in \setg{X}}\left\{\inner{y}{x} - \disc{J}(x) \right\}, & y \in \setg{Y}, \\
& \disc{\psi}(y) \Let \lftc{\costu}(-B\tr y) + \lftdd{J}(y), & y \in \setg{Y}, \label{eq:d-CDP op separ b} \\
& \dcdpo [\disc{J}](x) = \costx(x) + \lftd{\psi} \big( \dynx(x) \big), &  x \in \setg{X}. \label{eq:d-CDP op separ a}
\end{align}
\end{subequations}
Here, again, we assume that the conjugate of the input cost is analytically available (similar to Assumption~\ref{As:conj cost func alg 1}, now in the context posed by Setting~\ref{Set:prob class II}). 
It is also possible to compute this object numerically; see Appendix~\ref{subsec:extension num conj} for more details. 

\begin{As}[Conjugate of input cost]\label{As:conj cost func alg 2}
The conjugate function $\lftc{\costu}(v) = \max_{u } \{\inner{v}{u} - \costu(u) \}$ is analytically available; that is, the complexity of evaluating $\lftc{\costu}(v)$ for each $v \in \R^m$ is of $\ord (1)$.
\end{As} 

Notice how the function~$\disc{\psi}$ in~\eqref{eq:d-CDP op separ b} is now independent of the state variable~$x$. 
This means that the computation of $\disc{\psi}$ requires $\ord (X+Y)$ operations, 
as opposed to $\ord (XY)$ for the computation of $\disc{\psi}_x$ in Algorithm~\ref{alg:d-CDP general}. 
What remains to be addressed is the computation of the conjugate function~$\lftd{\psi} \big( \dynx(x) \big) = \max_{y \in \setg{Y}}\{\inner{\dynx(x)}{y} - \psi(y) \}$ for $x \in \setg{X}$ in~\eqref{eq:d-CDP op separ a}. 
The straightforward maximization via enumeration over $y \in \setg{Y}$ for each $x \in \setg{X}$ (as in Algorithm~\ref{alg:d-CDP general}) again leads to a time complexity of $\ord (XY)$. 
The key idea here is to use \emph{approximate discrete conjugation}:
\begin{itemize} 
\item \textbf{Step 1.} Use LLT to compute $\lftdd{\psi}: \setg{Z} \ra \R$ from the data points $\disc{\psi}: \setg{Y} \ra \R$ for a grid $\setg{Z}$;
\item \textbf{Step 2.} For each $x \in \setg{X}$, use LERP to compute $\llerp{\lftdd{\psi}}\big( \dynx(x) \big)$ from the data points $\lftdd{\psi}: \setg{Z} \ra \R$. 
\end{itemize}
Proper construction of the grid $\setg{Z}$ will be discussed in the next subsection. 
With such an approximation, the d-CDP operator~\eqref{eq:d-CDP op separ} \emph{modifies} to  
\begin{subequations} \label{eq:d-CDP op separ modified}
\begin{align}
&\lftdd{J} (y) = \max_{x \in \setg{X}}\left\{\inner{y}{x} - \disc{J}(x) \right\}, & y \in \setg{Y}, \\
& \disc{\psi}(y) = \lftc{\costu}(-B\tr y) + \lftdd{J}(y), & y \in \setg{Y}, \label{eq:d-CDP op separ modeified c} \\
& \lftdd{\psi} (z) = \max_{y \in \setg{Y}}\left\{\inner{z}{y} - \disc{\psi}(y) \right\}, & z \in \setg{Z}, \label{eq:d-CDP op separ modeified b} \\
& \mdcdpo [\disc{J}](x) \Let \costx(x) + \llerp {\lftdd{\psi}} \big( \dynx(x) \big). &  x \in \setg{X}, \label{eq:d-CDP op separ modeified a}
\end{align}
\end{subequations}
Algorithm~\ref{alg:d-CDP separ} provides the pseudo-code for the numerical scheme described above. 

\begin{algorithm}[t]
\begin{small}
   \caption{Implementation of the modified d-CDP operator~\eqref{eq:d-CDP op separ modified} for Setting~\ref{Set:prob class II}.}
   \label{alg:d-CDP separ}
\begin{algorithmic}[1]
	\REQUIRE dynamics~$\dynx: \R^n \ra \R^n, \ B \in \R^{n\times m}$; \\
	discrete cost-to-go (at $t+1$)~$\disc{J}: \setg{X} \ra \R$; \\
	state cost~$\costx(x): \setc{X} \ra \R$; \\
	conjugate of input cost~$\lftc{\costu}: \R^m \ra \R$; \\
	grids~$\setg{Y}, \setg{Z} \subset \R^n$.
	\ENSURE discrete cost-to-go (at $t$)~$\mdcdpo [\disc{J}](x): \setg{X} \ra \R$.
  	
  	\vspace{.3cm}
  	
    \STATE use LLT to compute $\lftdd{J}:\setg{Y} \ra \R$ from $\disc{J}: \setg{X} \ra \R$; \label{line_alg_s:LLT of J}
    
	\vspace{.1cm}
	
	\STATE $\disc{\psi}(y) \gets \lftc{\costu}(-B\tr y) + \lftdd{J}(y)$ for $y \in \setg{Y}$; \label{line_alg_s:h}
    
	\vspace{.1cm}    
    
    \STATE use LLT to compute $\lftdd{\psi}: \setg{Z} \ra \R$ from  $\disc{\psi}: \setg{Y} \ra \R$; \label{line_alg_s:LLT of h}
    
    \vspace{.1cm}
    
    \FOR{each $x \in \setg{X}$} 
    
 		\STATE  use LERP to compute $\llerp{\lftdd{\psi}}\big( \dynx(x) \big)$ from $\lftdd{\psi}: \setg{Z} \ra \R$; \label{line_alg_s:LERP of h}
     	
    	\STATE $\mdcdpo [\disc{J}](x) \gets \costx (x) + \llerp{\lftdd{\psi}}\big( \dynx(x) \big)$; \label{line_alg_s:TJ}
    	
   	\ENDFOR	
   	
\end{algorithmic}
\end{small}
\end{algorithm}

\subsection{Analysis of modified d-CDP algorithm} \label{subsec: analysis of Alg 2}

We again begin with the time complexity of the proposed algorithm.

\begin{Thm} [Complexity of modified d-CDP Algorithm~\ref{alg:d-CDP separ}] \label{thm:complexity d-CDP Alg 2}
Let Assumptions~\ref{As:grid size LLT} and ~\ref{As:conj cost func alg 2} hold. 
Then, the computation of the modified d-CDP operator~\eqref{eq:d-CDP op separ modified} via Algorithm~\ref{alg:d-CDP separ} has a time complexity of $\wt{\ord} (X + Y + Z)$. 
\end{Thm}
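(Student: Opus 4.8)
The plan is to bound the cost of Algorithm~\ref{alg:d-CDP separ} line by line and sum the contributions, exploiting the fact that the separable structure of Setting~\ref{Set:prob class II} lets us precompute everything depending on the dual variable~$y$ \emph{outside} the loop over $x\in\setg{X}$.

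First I would handle the three one-shot lines preceding the loop. Line~\ref{line_alg_s:LLT of J} is a single LLT call producing $\lftdd{J}:\setg{Y}\ra\R$ from $\disc{J}:\setg{X}\ra\R$; by Remark~\ref{rem:complexity of LLT} together with Assumption~\ref{As:grid size LLT}, this costs $\ord(X+Y)$. Line~\ref{line_alg_s:h} evaluates $\disc{\varphi}(y)=\lftc{\costu}(-B\tr y)+\lftdd{J}(y)$ at the $Y$ grid points; by Assumption~\ref{As:conj cost func alg 2} each evaluation of $\lftc{\costu}$ is $\ord(1)$, while the product $-B\tr y$ and the single addition are also $\ord(1)$ under the convention of Section~\ref{subsec:notations} that dimension dependence is suppressed, so this line costs $\ord(Y)$. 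Line~\ref{line_alg_s:LLT of h} is a second LLT call producing $\lftdd{\varphi}:\setg{Z}\ra\R$ from $\disc{\varphi}:\setg{Y}\ra\R$, which by the same reasoning costs $\ord(Y+Z)$.

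Next I would bound the loop. For each of the $X$ values $x\in\setg{X}$, Line~\ref{line_alg_s:LERP of h} evaluates $\dynx(x)$ (an $\ord(1)$ function evaluation by convention) and then performs a single LERP evaluation of $\llerp{\lftdd{\varphi}}$ at that point; by Remark~\ref{rem:complexity of LERP} one LERP evaluation costs $\wt{\ord}(1)$, and Line~\ref{line_alg_s:TJ} adds $\costx(x)$ in $\ord(1)$. Hence each iteration is $\wt{\ord}(1)$ and the entire loop is $\wt{\ord}(X)$. Summing all contributions gives $\ord(X+Y)+\ord(Y)+\ord(Y+Z)+\wt{\ord}(X)=\wt{\ord}(X+Y+Z)$, as claimed.

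The accounting itself is routine; the only substantive point — and the reason the bound improves from the $\ord(XY)$ of Theorem~\ref{thm:complexity d-CDP Alg 1} — is that the assumptions of Setting~\ref{Set:prob class II} make $\disc{\varphi}$ independent of $x$, so the conjugate $\lftd{\varphi}\big(\dynx(x)\big)$ can be obtained by a \emph{single} precomputed LLT (Line~\ref{line_alg_s:LLT of h}) followed by one cheap LERP per state, rather than a fresh maximization over $\setg{Y}$ for every $x$. The one subtlety I would flag is the logarithmic factor hidden in $\wt{\ord}$: it originates solely from the binary search in LERP over a non-uniform grid $\setg{Z}$, and disappears (yielding a clean $\ord(X+Y+Z)$) whenever $\setg{Z}$ is chosen uniform, per Remark~\ref{rem:complexity of LERP}.
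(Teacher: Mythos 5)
Your proof is correct and follows essentially the same route as the paper's: a line-by-line accounting that charges $\ord(X+Y)$ and $\ord(Y+Z)$ to the two LLT calls via Remark~\ref{rem:complexity of LLT}, $\ord(Y)$ to the evaluation of $\disc{\varphi}$ via Assumption~\ref{As:conj cost func alg 2}, and $\wt{\ord}(1)$ per LERP query via Remark~\ref{rem:complexity of LERP}, giving $\wt{\ord}(X)$ for the loop and $\wt{\ord}(X+Y+Z)$ in total. Your closing observations (the $x$-independence of $\disc{\varphi}$ as the source of the speedup, and the log factor vanishing for uniform $\setg{Z}$) are accurate and consistent with the paper's discussion.
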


\begin{proof}
See Appendix~\ref{proof:complexity d-CDP Alg 2}.
\end{proof}

Comparing the time complexity of the d-CDP Algorithm~\ref{alg:d-CDP separ} with that of the d-CDP Algorithm~\ref{alg:d-CDP general}, 
we observe a reduction from quadratic complexity to (log-)linear complexity. 
In particular, if all the involved grids ($\setg{X},\setg{Y},\setg{Z}$) are of the same size, i.e., $Y,Z = X$ (this is also consistent with Assumption~\ref{As:grid size LLT}), 
then the complexity of the d-CDP Algorithm~\ref{alg:d-CDP general} is of $\ord(X^2)$, 
while that of the d-CDP Algorithm~\ref{alg:d-CDP separ} is of $\wt{\ord}(X)$.

We next consider the error of the proposed algorithm by providing a bound on the difference between the modified d-CDP operator~\eqref{eq:d-CDP op separ modified} and the DP operator~\eqref{eq:DP op separ}.

\begin{Thm}[Error of modified d-CDP Algorithm~\ref{alg:d-CDP separ}]\label{thm:error d-CDP Alg 2}
Consider the DP operator~$\dpo$~\eqref{eq:DP op separ} and the implementation of the modified d-CDP operator~$\mdcdpo$~\eqref{eq:d-CDP op separ modified} via Algorithm~\ref{alg:d-CDP separ}. 
Assume that the input cost $\costu: \setc{U} \ra \R$ is convex, 
and the function $J:\setc{X} \ra \R$ is a Lipschitz continuous, convex function. 
Also, assume that the grid~$\setg{Z}$ in Algorithm~\ref{alg:d-CDP separ} is such that $\co (\setg{Z}) \supseteq \dynx(\setg{X})$. 
Then, for each~$x \in \setg{X}$, we have
\begin{equation} \label{eq:err bound d-CDP Alg 2}
-\big( e_2 + e_3 \big) \leq \dpo [J](x) - \mdcdpo [\disc{J}](x) \leq e^m_1(x),
\end{equation}
where
\begin{eqnarray} \label{eq:err terms d-CDP Alg 2}
\begin{array}{l}
e^m_1(x) \Let \big[ \norm{\dynx(x)} + \norm{B} \cdot \diam{\setc{U}} + \diam{\setc{X}} \big] \cdot \dist\big(\partial  \big(\dpo [J]-\costx\big) (x),\setg{Y}\big), \\
e_2 =   \left[ \diam{\setg{Y}} +  \lip(J) \right] \cdot \dish (\setc{X}, \setg{X}), \\
e_3 = \diam{\setg{Y}} \cdot \dish\big(\dynx (\setg{X}), \setg{Z} \big).
\end{array}
\end{eqnarray}
\end{Thm}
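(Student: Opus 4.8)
The plan is to peel off the additively separable state cost and then reduce the remaining error to the one already controlled by Theorem~\ref{thm:error d-CDP Alg 1}, plus a single new contribution coming from the extra LERP step over the grid~$\setg{Z}$ that distinguishes Algorithm~\ref{alg:d-CDP separ} from Algorithm~\ref{alg:d-CDP general}.

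First, since the term $\costx(x)$ appears identically in $\dpo[J](x)$~\eqref{eq:DP op separ} and in $\mdcdpo[\disc{J}](x)$~\eqref{eq:d-CDP op separ modified}, it suffices to bound $G(x) - \llerp{\lftdd{\varphi}}\big(\dynx(x)\big)$, where $G(x) \Let \dpo[J](x) - \costx(x) = \min_{u}\{\costu(u) + J(\dynx(x)+Bu)\}$. Using the convexity of $J$ and $\costu$ together with the compactness and feasibility postulated in Setting~\ref{Set:prob class II}, strong duality yields $G(x) = \lftc{\phi}\big(\dynx(x)\big)$ with $\phi(y) = \lftc{\costu}(-B\tr y) + \lftc{J}(y)$; this is the operational duality of addition and infimal convolution under conjugation, specialized to the affine map $u \mapsto \dynx(x)+Bu$. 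Introducing the \emph{exact} discrete conjugate value over $\setg{Y}$, namely $\lftd{\varphi}\big(\dynx(x)\big) = \max_{y \in \setg{Y}}\{\inner{\dynx(x)}{y} - \disc{\varphi}(y)\}$ (the quantity that the non-LERP operator $\dcdpo$ would return), I would split
\[
G(x) - \llerp{\lftdd{\varphi}}\big(\dynx(x)\big) = \underbrace{\Big[ G(x) - \lftd{\varphi}\big(\dynx(x)\big) \Big]}_{=:\,A} + \underbrace{\Big[ \lftd{\varphi}\big(\dynx(x)\big) - \llerp{\lftdd{\varphi}}\big(\dynx(x)\big) \Big]}_{=:\,B}.
\]

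For the term $A$, I would observe that it is \emph{exactly} the error of the general d-CDP operator of Algorithm~\ref{alg:d-CDP general} evaluated in Setting~\ref{Set:prob class II}: separability gives $\lftc{\cost_x}(v) = -\costx(x) + \lftc{\costu}(v)$, so the operator~\eqref{eq:d-CDP op} collapses to $\dcdpo[\disc{J}](x) = \costx(x) + \lftd{\varphi}\big(\dynx(x)\big)$, whence $A = \dpo[J](x) - \dcdpo[\disc{J}](x)$. Since Setting~\ref{Set:prob class II} is a subclass of Setting~\ref{Set:prob class I} and $J$ is convex and Lipschitz, Theorem~\ref{thm:error d-CDP Alg 1} applies and delivers $-e_2 \leq A \leq e^m_1(x)$, upon the two natural specializations $\norm{\dynu(x)} = \norm{B}$ and the replacement of $\partial\dpo[J](x)$ by $\partial(\dpo[J]-\costx)(x)$. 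The latter replacement is the delicate point: because $\costx$ enters $\dpo[J]$ purely additively and plays no role in the dual maximization, the optimal dual variable that $\setg{Y}$ must capture is a subgradient of $\dpo[J]-\costx = G$ rather than of $\dpo[J]$ itself; I would verify this by retracing the step of the proof of Theorem~\ref{thm:error d-CDP Alg 1} where Lemma~\ref{lem:conj vs. d-conj} is invoked on $\phi$, noting that $\partial\lftc{\phi}\big(\dynx(x)\big)$ is precisely the set of optimal dual variables attached to $G$.

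For the new term $B$, I would invoke Corollary~\ref{cor:disc conj via lerp} with the discrete function $\disc{\varphi}: \setg{Y} \ra \R$ (so its diameter is $\diam{\setg{Y}}$) and dual grid $\setg{Z}$. The hypothesis $\co(\setg{Z}) \supset \dynx(\setg{X})$ guarantees $\dynx(x) \in \co(\setg{Z})$ for every $x \in \setg{X}$, so~\eqref{eq:disc conj via lerp error} gives $0 \leq \llerp{\lftdd{\varphi}}\big(\dynx(x)\big) - \lftd{\varphi}\big(\dynx(x)\big) \leq \diam{\setg{Y}} \cdot \dist\big(\dynx(x),\setg{Z}\big) \leq \diam{\setg{Y}} \cdot \dish\big(\dynx(\setg{X}),\setg{Z}\big) = e_3$, i.e. $-e_3 \leq B \leq 0$. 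Adding the two ranges then yields $-(e_2+e_3) \leq A + B \leq e^m_1(x)$, which is the claim. The main obstacle in executing this plan is the subdifferential bookkeeping in $A$: establishing cleanly that the separable structure turns the dual-discretization bound of Theorem~\ref{thm:error d-CDP Alg 1} into one governed by $\partial(\dpo[J]-\costx)(x)$, while everything else is either a direct citation (term $A$) or a direct application of Corollary~\ref{cor:disc conj via lerp} (term $B$).
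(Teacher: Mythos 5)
Your proposal is correct and follows essentially the same route as the paper's proof: the identical decomposition into the Algorithm~\ref{alg:d-CDP general} error on the separable problem plus the LERP-over-$\setg{Z}$ error, with Corollary~\ref{cor:disc conj via lerp} yielding the one-sided bound $e_3$ and Theorem~\ref{thm:error d-CDP Alg 1} yielding the range $[-e_2,\, e^m_1(x)]$. The only difference is how the ``delicate point'' is packaged: instead of retracing the proof of Theorem~\ref{thm:error d-CDP Alg 1} to justify replacing $\partial \dpo[J](x)$ by $\partial\big(\dpo[J]-\costx\big)(x)$, the paper applies Theorem~\ref{thm:error d-CDP Alg 1} directly to the intermediate operators $\iidpo[J] = \dpo[J]-\costx$ and $\idcdpo[\disc{J}] = \dcdpo[\disc{J}]-\costx$ (i.e., to the problem with state-independent stage cost $\costu$), so that the bound automatically involves the subdifferential of $\dpo[J]-\costx$.
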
 

\begin{proof}
See Appendix~\ref{proof:error d-CDP Alg 2}.
\end{proof}

Once again, the three terms capture the errors due to discretization of $y$, $x$, and $z$, respectively. 
We now use this result to provide some guidelines on the construction of the required grids. 
Concerning the grid $\setg{Y}$, because of the error term $e^m_1$, similar guidelines to the ones provided preceding to and in Remark~\ref{rem:constr Y} apply here. 
In particular, notice that the first error term $e^m_1$ \eqref{eq:err terms d-CDP Alg 2} now depends on $\dist\big(\partial  \big(\dpo [J]-\costx\big) (x),\setg{Y}\big)$, and hence in the construction of $\setg{Y}$, we need to consider the range of slopes of $\dpo [J]-\costx$. This essentially means using $\costu^M = \max_{u \in \setc{U}} \costu$ and $\costu^m = \min_{u \in \setc{U}} \costu$ instead of $\cost^M$ and $\cost^m$, respectively, in Remark~\ref{rem:constr Y}. 

Next to be addressed is the construction of the grid~$\setg{Z}$. 
Here, we are dealing with the issue of constructing the dual grid for approximate discrete conjugation.
Then, by Corollary~\ref{cor:disc conj via lerp}, we can either 
construct a \emph{fixed} grid $\setg{Z}$ such that $\co (\setg{Z}) \supseteq \dynx(\setg{X})$, or 
construct $\setg{Z}$ \emph{dynamically} such that $\co (\setsg{Z}) \supseteq \setc{L} (\disc{\psi})$ in each iteration. 
The former has a \emph{one-time} computational cost of $\ord (X)$, while the latter requires $\ord(Y)$ operations \emph{per iteration}. 
For this reason, as also assumed in Theorem~\ref{thm:error d-CDP Alg 2}, we use the first method to construct $\setg{Z}$. 
The following remark summarizes this discussion. 

\begin{Rem}[Construction of $\setg{Z}$]\label{rem:constr Z}
Construct the grid $\setg{Z}$ such that $\co (\setg{Z}) \supseteq \dynx(\setg{X})$. 
This can be done by finding the vertices of the smallest hyper-rectangle that contains the set $\dynx\big(\setg{X}\big)$. 
Such a construction has a \emph{one-time} computational cost of~$\ord (X)$.
\end{Rem}

We finish this section with some remarks on using the output of the backward value iteration for finding a suboptimal control sequence~$\mathbf{u}\opt (x_0)$ for a given instance of the optimal control problem with initial state~$x_0$.\footnote{
We note that the backward value iteration using the d-DP algorithm also provides us with control laws $\disc{\mu}_t: \setg{X} \ra \setg{U},\ t = 0,1, \ldots, T-1$. 
However, the d-CDP algorithms only provide us with the costs $\disc{J}_t, \ t = 0,1,\ldots,T-1$.  
Hence, when the d-DP algorithm is used, we can alternatively use the control laws, accompanied by a proper extension operator, to produce a suboptimal control sequence, i.e.,
\begin{equation*}
u\opt_t(x_t) = \lerp{\disc{\mu_t}}(x_t), \quad t = 0, 1, \ldots, T-1.
\end{equation*}
This method has a time complexity of $\ord(E)$, where $E$ represents the complexity of the extension operation used above. 
This complexity can be particularly lower than that of generating greedy actions w.r.t. the computed costs in \eqref{eq:opt control J}. 
However, generating control actions using the control laws has a higher memory complexity for systems with multiple inputs, and is also usually more sensitive to modeling errors due to its completely open-loop nature.    
Moreover, we note that the \emph{total} time complexity of solving an instance of the optimal control problem, 
i.e., backward iteration for computing the costs~$\disc{J}_t$ and control laws~$\disc{\mu}_t$, and forward iteration for computing the control sequence~$\mathbf{u}\opt (x_0)$, 
is in both methods of $\ord(TXUE)$. 
That is, computationally, the backward value iteration is the dominating factor. 
} 
Having the discrete costs-to-go $\disc{J}_t:\setg{X} \ra \R, \ t = 0,1,\ldots,T-1$, at our disposal (the output of the d-DP or d-CDP algorithm), at each time step, we can use \emph{the greedy action} w.r.t. the next step's cost-to-go, i.e.,
\begin{equation}\label{eq:opt control J}
u\opt_t \in \argmin_{u_t \in \setd{U}} \left\{\cost(x_t,u_t) + \lerp{\disc{J_{t+1}}}\big(\dyn(x_t,u_t)\big) \right\}, \quad t = 0,1, \ldots, T-1,
\end{equation}
for a proper discrete input space $\setd{U}$. 
Assuming these minimization problems are handled via enumeration, 
they lead to an additional computational burden of $\ord(UE)$ per iteration, where $E$ represents the complexity of the extension operation in~\eqref{eq:opt control J}. 
Then, the \emph{total} time complexity of solving a $T$-step problem (i.e., the time requirement of backward value iteration for finding $\disc{J}_t, \ t = 0,1,\ldots,T-1$, plus the time requirement of forward iteration for finding $u\opt_t, \ t = 0,1,\ldots,T-1$) of the three algorithms can be summarized as follows.

\begin{Rem}[Comparison of total complexities]\label{rem:complexity compare}
The total time complexity of solving a $T$-step problem for a given initial state, where the control input is generated using the greedy policy~\eqref{eq:opt control J},  is of 
\begin{itemize}
\item[(i)] $\ord(TXUE)$ for the d-DP algorithm,
\item[(ii)] $\ord \big(T(XY+UE)\big)$ for the d-CDP Algorithm~\ref{alg:d-CDP general},
\item[(iii)] $\wt{\ord} \big(T(X+Y+Z+UE)\big)$ for the d-CDP Algorithm~\ref{alg:d-CDP separ}, 
\end{itemize} 
where $E$ represents the complexity of the extension operation in~\eqref{eq:d-DP op} and~\eqref{eq:opt control J}.
\end{Rem} 

Once again, we see a reduction from quadratic to linear complexity in the modified d-CDP Algorithm~\ref{alg:d-CDP separ} compared to both the d-DP algorithm and the d-CDP Algorithm~\ref{alg:d-CDP general}.

\section{Numerical experiments}
\label{sec:numerical ex}

In this section, we examine the performance of the proposed d-CDP algorithms (referred to as d-CDP~\ref{alg:d-CDP general} and d-CDP~\ref{alg:d-CDP separ} in this section) in comparison with the generic d-DP algorithm (referred to as d-DP in this section) through a synthetic numerical example. 
In particular, we use this numerical example to verify our theoretical results on the complexity and error of the proposed algorithms. 
Here, we focus on the performance of the basic algorithms for deterministic systems for which the conjugate of the (input-dependent) stage cost is analytically available (see Assumptions~\ref{As:conj cost func alg 1} and \ref{As:conj cost func alg 2}). 
The extension of the proposed d-CDP algorithms and their numerical simulations are provided in Appendix~\ref{app:num example}. 
Finally, we note that all the simulations presented in this article were implemented via MATLAB version R2017b, on a PC with an Intel Xeon 3.60~GHz processor and 16~GB RAM. 

We consider a linear system with two states and two inputs described by 
\begin{equation*}
x_{t+1} =  \left[ \begin{array}{cc} -0.5 & 2 \\ 1 & 3 \end{array} \right] x_t 
+ \left[ \begin{array}{cc} 1 & 0.5 \\ 1 & 1 \end{array} \right] u_t,
\end{equation*} 
over the finite horizon $T=10$, 
with the following state and input constraints 
\begin{equation*}
x_t \in \setc{X} = [-1,1]^2 \subset \R^2 , \quad u_t \in \setc{U} = [-2,2]^2 \subset \R^2.
\end{equation*}
Moreover, we consider \emph{quadratic state cost} and \emph{exponential input cost} as follows
\begin{equation*}
\costx (x) = \cost_T (x) =  \norm{x}^2, \quad \costu (u) = e^{|u_1|} + e^{|u_2|} - 2.
\end{equation*}
Note that the conjugate of the input cost is indeed analytically available and given by
\begin{equation*}
\lftc{\costu}(v) = 1 + \inner{\hat{u}}{v} - e^{|\hat{u}_1|} - e^{|\hat{u}_2|}, \quad v \in \R^2,
\end{equation*} 
where 
$$
\hat{u}_i = \left\{ \begin{array}{lc}
\max \big\{ -2, \ \min \left\{ 2, \ \sgn (v_i) \ln |v_i| \right\} \big\}, & v_i \neq 0, \\
0, & v_i = 0,
\end{array} 
\right.
\quad i=1,2.
$$
Moreover, corresponding to the notation of Section~\ref{sec:conj DP}, the stage cost and its conjugate are given by
\begin{align*}
\cost_x(u) &= \cost (x,u) = \norm{x}^2 + e^{|u_1|} + e^{|u_2|} - 2, \quad (x,u) \in \setc{X} \times \setc{U}, \\
\lftc{\cost_x}(v) &= \lftc{\costu}(v) - \norm{x}^2,  \quad (x,v) \in \setc{X} \times \R^2.
\end{align*}

We use \emph{uniform} grid-like discretizations $\setg{X}$ and $\setg{U}$ for the state and input spaces, such that $\co (\setg{X}) = \setc{X}$ and $\co (\setg{U}) = \setc{U}$. 
The grids $\setg{Y}$ and $\setg{Z}$ involved in d-CDP algorithms are also constructed \emph{uniformly}, according to the guidelines provided in Remarks~\ref{rem:constr Y} and \ref{rem:constr Z} (with $\alpha = 1$). 
We are particularly interested in the performance (error and time complexity) of d-CDP algorithms in comparison with d-DP, as the size of these discrete sets increases. 
Considering the fact that all the discrete sets are uniform grids, and we use LERP for all the extension operations (particularly, for the extension of the discrete cost functions in the d-DP operation~\eqref{eq:d-DP op} and for generating the greedy control actions in~\eqref{eq:opt control J}), the complexity of a single evaluation of all extensions is of $\ord(E) = \ord(1)$; see Remark~\ref{rem:complexity of LERP}. 

We begin with examining the error in d-DP and d-CDP algorithms w.r.t. the ``reference'' costs-to-go $J_t^{\star}: \setc{X} \ra \R$. 
Since the problem does not have a closed-form solution, these reference costs $J_t^{\star}$ are computed numerically via a high-resolution application of d-DP with $X,U = 81^2$.
Figure~\ref{fig:error} depicts the maximum absolute error in the discrete cost functions~$\disc{J}_t$ computed using these algorithms over the horizon.
As expected and in line with our error analysis (Theorems~\ref{thm:error d-CDP Alg 1} and \ref{thm:error d-CDP Alg 2} and Proposition~\ref{prop:error d-DP}), using a finer discretization scheme with larger $X,U,Y,Z = N$, leads to a smaller error. 
Moreover, over the time steps in the backward iteration, a general increase is seen in the error which is due to the accumulation of error.  
For further illustration, Figure~\ref{fig:J1} shows the corresponding costs-to-go at $t=9$ and $t=0$, with $N=21^2$.  
Notice that, since the stage and terminal costs are convex and the dynamics is linear, the costs-to-go are also convex. 
As can be seen in Figure~\ref{fig:J1}, while d-CDP~\ref{alg:d-CDP general} preserves the convexity, d-DP and d-CDP~\ref{alg:d-CDP separ} output non-convex costs-to-go (due to the application of LERP in these algorithms).
In particular, notice how $J_0^{\text{CDP1}}$ is convex-extensible, while $J_0^{\text{DP}}$ and $J_0^{\text{CDP2}}$ are not.

\begin{figure}[t]
\begin{subfigure}{.5\textwidth}
  \centering
  \includegraphics[clip, trim=.8cm 0cm 6.6cm 0cm,width=.6\linewidth]{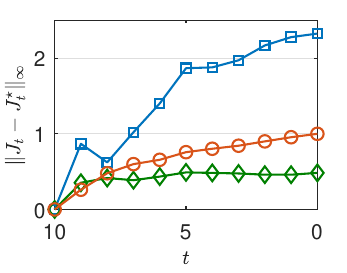}
  \caption{}
\end{subfigure}%
\begin{subfigure}{.5\textwidth}
  \centering
  \includegraphics[clip, trim=6.6cm 0cm .8cm 0cm,width=.6\linewidth]{error.pdf}
  \caption{}
\end{subfigure}
  \caption{Error of the computed discrete costs $\disc{J}_t:\setg{X} \ra \R$ using d-DP, d-CDP~\ref{alg:d-CDP general}, and d-CDP~\ref{alg:d-CDP separ} for grid sizes $X,U,Y,Z=N$: (a) $N=11^2$; (b) $N=41^2$. Notice that the time axis is backward.}
  \label{fig:error}
\end{figure}

\begin{figure}[t]
  \centering
  \includegraphics[clip, trim=1.5cm .5cm 1cm .5cm,width=1\linewidth]{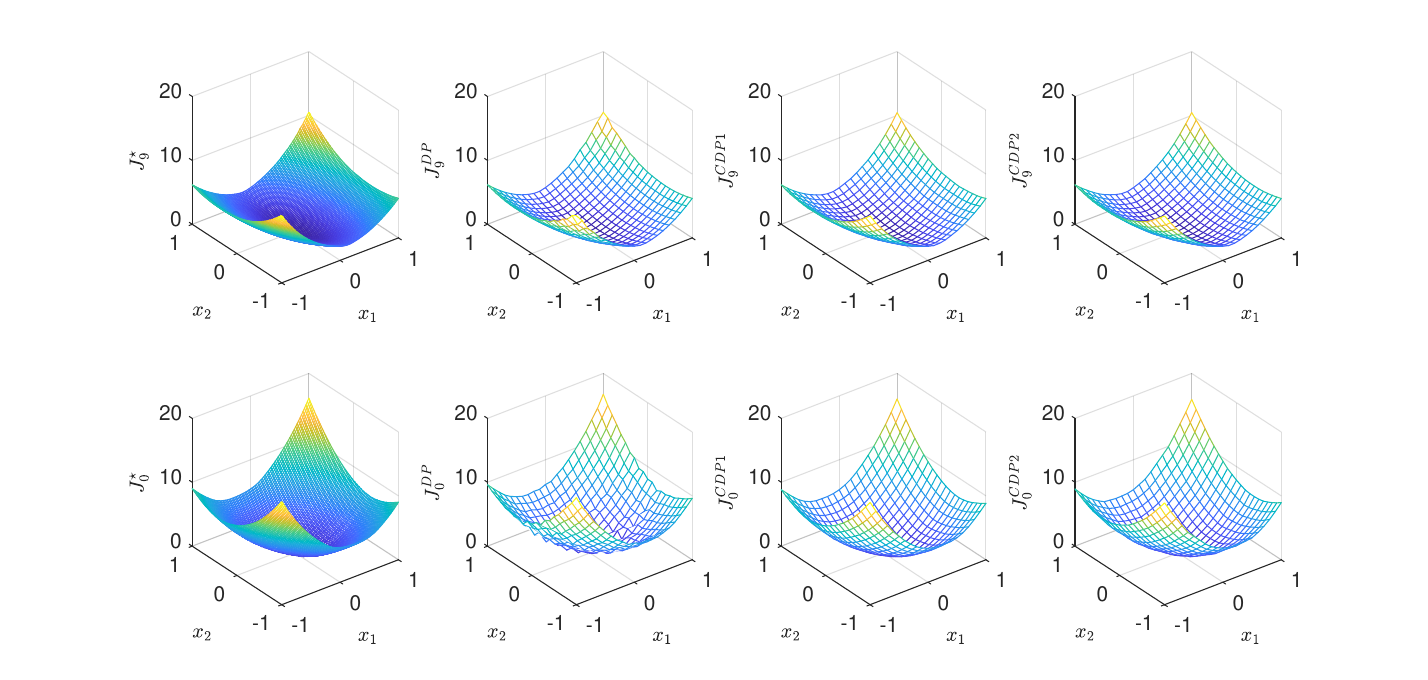}
  \caption{Computed discrete costs $\disc{J}_t:\setg{X} \ra \R$ using d-DP, d-CDP~\ref{alg:d-CDP general}, and d-CDP~\ref{alg:d-CDP separ} for grid sizes $X,U,Y,Z=21^2$ at $t=9$ (top) and $t=0$ (bottom).}
  \label{fig:J1}
\end{figure} 

We next compare the performance of the three algorithms in solving instances of the optimal control problem, using the cost functions derived from the backward value iteration. 
To this end, we apply the greedy control input~\eqref{eq:opt control J} w.r.t. the computed discrete costs-to-go $\disc{J}_t$ using d-DP and d-CDP algorithms, and the same discrete input space $\setg{U}$ as the one in d-DP. 
Let us first consider the complexity of d-DP and d-CDP algorithms. 
Figure~\ref{fig:complexity} reports the \emph{total} run-time of a random problem instance for different grid sizes (i.e., the time requirement of backward value iteration for finding $\disc{J}_t, \ t = 0,1,\ldots,T-1$, plus the time requirement of forward iteration for finding $u\opt_t, \ t = 0,1,\ldots,T-1$). 
Regarding the reported running times, note that they correspond to the given complexities in Theorems~\ref{thm:complexity d-CDP Alg 1} and \ref{thm:complexity d-CDP Alg 2} and Remark~\ref{rem:complexity compare}: 
For our numerical example, the running time is of $\ord(TN^2)$ for d-DP and d-CDP~\ref{alg:d-CDP general}, and of $\ord(TN)$ for d-CDP~\ref{alg:d-CDP separ}. 
The difference can be readily seen in the slope of the corresponding lines in Figure~\ref{fig:complexity} as $N$ increases. 
In this regard, we also note that the backward value iteration is the absolutely dominant factor in the reported running times. (Effectively, the reported numbers can be taken to be the run-time of the backward value iteration). 
In Figure~\ref{fig:cost}, we also report the average cost of the controlled trajectories over 100 instances of the optimal control problem with random initial conditions, chosen uniformly from $\setc{X} = [-1,1]^2$. 

\begin{figure}[t]
\begin{subfigure}{.5\textwidth}
  \centering
  \includegraphics[clip, trim=0cm 0cm 0cm .2cm,width=.6\linewidth]{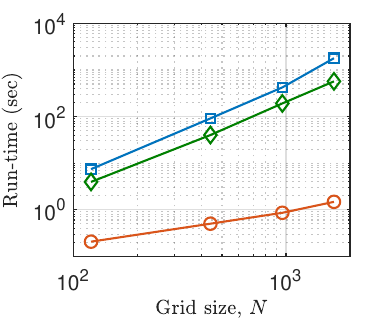}
  \caption{}
  \label{fig:complexity}
\end{subfigure}%
\begin{subfigure}{.5\textwidth}
  \centering
  \includegraphics[clip, trim=0cm 0cm 0cm .2cm,width=.6\linewidth]{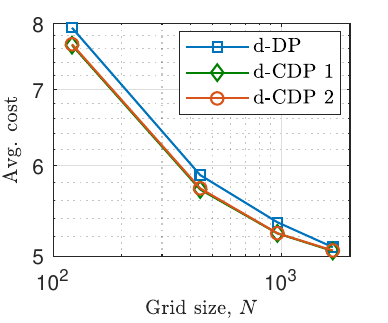}
  \caption{}
  \label{fig:cost}
\end{subfigure}
\caption{Performance of d-DP, d-CDP~\ref{alg:d-CDP general}, d-CDP~\ref{alg:d-CDP separ} for different grid sizes $X,U,Y,Z = N$: (a) the total running time for solving a random problem instance; (b) the average cost of controlled trajectories for 100 random initial states.}
\label{fig:perf}
\end{figure}

Looking at Figure~\ref{fig:perf}, one notices that d-CDP~\ref{alg:d-CDP separ}, compared to d-DP, has a similar performance when it comes to the quality of greedy control actions, however, with a significant reduction in the running time. 
In particular, notice how the lower complexity of d-CDP~\ref{alg:d-CDP separ} allows us to increase the size of the grids to $N=41^2$, while keeping the running time at the same order as that of d-DP with $N=11^2$. 
Comparing the performance of d-CDP~\ref{alg:d-CDP general} with d-DP, on the other hand, one notices that they show effectively the same performance w.r.t. the considered measures. 
d-CDP~\ref{alg:d-CDP general}, however, gives us an extra degree of freedom for the size $Y$ of the dual grid. 
In particular, if the cost functions are ``compactly representable'' in the dual domain (i.e., via their slopes), we can reduce the time complexity of d-CDP~\ref{alg:d-CDP general} by using a more coarse grid $\setg{Y}$, with a limited effect on the ``quality'' of computed cost functions. 
This effect is illustrated in Table~\ref{tab:comparison 1}: For solving the same optimal control problem with $X,U,Y=41^2$, we can reduce the size of the dual grid by a factor of $4$ to $Y=21^2$, and hence reduce the running time of d-CDP~\ref{alg:d-CDP general}, while achieving the same average cost in the controlled trajectories.

\begin{table}[t]
\caption{Performance d-DP and d-CDP~\ref{alg:d-CDP general} for grid sizes $X,Y,U$: 
The reported numbers are the total running time for solving a random problem instance and the average cost of controlled trajectories for 100 random initial states. The first two rows correspond to the rightmost data points in Figure~\ref{fig:perf}.}
\label{tab:comparison 1}
\begin{center}
\begin{small}
\begin{tabular}{lcc}
\toprule
Algorithm  & Run-time (sec) & Avg. cost  \\
\midrule
$\mathrm{d}$-DP with $X,U = 41^2$                                       & $1790$ & $5.09$  \\
$\mathrm{d}$-CDP~\ref{alg:d-CDP general} with $X,Y = 41^2$         & $570$ & $5.05$  \\
$\mathrm{d}$-CDP~\ref{alg:d-CDP general} with $X = 41^2, Y = 21^2$ & $187$ & $5.05$  \\
\bottomrule
\end{tabular}
\end{small}
\end{center}
\end{table}

\section{Final remarks}
\label{sec:remarks}

In this final section, the limitations of the proposed algorithms and possible remedies to alleviate them are discussed.
We also discuss some of the algorithms available in the literature and their connection to the d-CDP algorithms. 
Finally, we mention possible extensions of the current work as future research directions. 

\subsection{Curse of dimensionality and grid-like discretization} 

The proposed d-CDP algorithms still suffer from the infamous ``curse of dimensionality'' in the sense that the computational cost increases exponentially with the dimension of the state and input spaces. 
This is because the size of the discretized state and input spaces increase exponentially with their dimensions. 
However, we note that in the d-CDP Algorithm~\ref{alg:d-CDP separ}, the rate of exponential increase is $\max\{n,m\}$ (corresponding to $\ord(X+U)$ complexity), compared to the rate $m+n$ for the d-DP algorithm (corresponding to $\ord(XU)$ complexity). 
Moreover, in this study, we used grid-like discretizations of \emph{both} primal and dual state spaces. 
This is particularly suitable for problems with (almost) box constraints on the state variables (as illustrated in the numerical example in Section~\ref{sec:numerical ex}). 
However, we note that to enjoy the linear-time complexity of LLT, we are \emph{only} required to choose a grid-like \emph{dual} grid \cite[Rem.~5]{Lucet97}; 
that is, the discretization of the state space in the primal domain need not be grid-like. 

\subsection{Towards quantum dynamic programming} \label{subsec:quantum CDP} 
An interesting feature of the conjugate dynamic programming framework proposed in this study is that it can be potentially combined with existing tools/techniques for further reduction in time complexity. 
For example, the proposed framework can be readily combined with sample-based value iteration algorithms that focus on transforming the infinite-dimensional optimization in DP problems into computationally tractable ones (e.g., the common state aggregation technique \cite[Sec. 8.1]{Pow11} with piece-wise constant approximation). 
More interestingly, motivated by the recent quantum speedup for discrete conjugation~\cite{Sutter20},
we envision that the proposed framework paves the way for developing a quantum DP algorithm. 
Indeed, the proposed algorithms are developed such that any reduction in the complexity of discrete conjugation immediately translates to a reduced computational cost of these algorithms. 

\subsection{Value iteration in the conjugate domain} \label{subsec:fvi}
Let us first note that the algorithms developed in this study involve two LLT transforms at the beginning and end of each step (see, e.g., lines~\ref{line_alg_s:LLT of J} and~\ref{line_alg_s:LLT of h} in Algorithm~\ref{alg:d-CDP separ}). 
Hence, the possibility of a perfect transformation of the minimization in the primal domain to a simple addition in the conjugate domain is interesting since it allows for performing the value iteration completely in the conjugate domain for the \emph{conjugate} of the costs-to-go. 
In other words, we can stay in the conjugate domain over multiple steps in time, and avoid the first conjugate operation at the beginning of the intermediate steps. 
This, in turn, leads to a lower computational cost in multistep implementations. 
However, for such a perfect transformation to be possible, we need to impose further restrictions on the problem data. 
To be precise, we need (cf. Setting~\ref{Set:prob class II}) 
\begin{itemize}
\item[(i)] the dynamics to be \emph{linear}, i.e., $\dyn(x,u) = Ax+Bu$, where the state matrix $A$ is \emph{invertible}, 
\item[(ii)] and the stage cost to be \emph{state-independent}, i.e., $\cost(x,u) = \costu(u)$.
\end{itemize}
For systems satisfying these conditions, the DP operator reads as
\begin{align*}
\dpo [J] (x)= \min_{u }\left\{ \costu(u) + J (Ax+Bu) \right\}, \quad x \in \setc{X},
\end{align*}
and its conjugate can be shown to be given by  
\begin{align*}
\lftc{[\dpo [J]]} (y) = \lftc{\costu}(-B\tr A\itr y) + \lftc{J}(A\itr y), \quad y \in \R^n. 
\end{align*}
Notice the perfect transformation of the minimization in the DP operator in the primal domain to an addition in the dual domain. 
This property indeed allows us to stay in the dual domain over multiple steps in time, while only computing the conjugate of the costs in the intermediate steps. 
The possibility of such a perfect transformation, accompanied by the application of LLT for better time complexity, was first noticed in \cite{Caprio16}, 
where the authors introduced the ``fast value iteration'' algorithm for a more restricted class of DP problems (besides the properties discussed above, they required, among other conditions, the state matrix $A$ to be non-negative and monotone). 
In this regard, we also note that, as in \cite{Caprio16}, the possibility of staying in the conjugate domain over multiple steps is particularly interesting for infinite-horizon problems.  

\subsection{Relation to max-plus linear approximations} \label{subsec:max-plus} 
Recall the d-CDP reformulation
\begin{equation*} 
\dcdpo [\disc{J}](x) = \min_{u} \left\{ \cost(x,u) + \bcdd{J} \big( \dyn(x,u) \big) \right\},
\end{equation*}
in Proposition~\ref{prop:d-CDP op}, and note that 
$$
\bcdd{J}(x) = \max_{y \in \setg{Y}} \left\{ \inner{x}{y} - \lftdd{J}(y) \right\},
$$
is a max-plus linear combination using the linear basis functions $x \mapsto \inner{x}{y}$ and coefficients $\lftdd{J}(y)$, with $y \in \setg{Y}$ being the slopes for the basis functions. 
That is, the d-CDP algorithm, similarly to the approximate value iteration algorithms in~\cite{Bach19,Bach20}, employs a max-plus linear approximation of $J$ as a piece-wise affine function. 
The key difference in our algorithms is however that by choosing a grid-like dual domain~$\setg{Y}$, we can take advantage of the linear-time complexity of LLT in computing the coefficients $\lftdd{J}(y)$ using the data points $\disc{J}:\setg{X} \ra \R$. 
Moreover, instead of using a fixed basis, we incorporate a dynamic basis by updating the grid~$\setg{Y}$ at each iteration to reduce the error of the algorithm. 

An interesting future research direction is to consider other forms of max-plus linear approximations for the cost functions. 
In particular, instead of convex, piece-wise affine approximation, one can consider the semi-concave, piece-wise quadratic approximation~\cite{McEn06} 
$$
J^{\mathrm{d}\circledast\mathrm{d}\circledast}(x) = \max_{w \in \setg{W}} \left\{ c \norm{x-w}^2 + J^{\mathrm{d}\circledast\mathrm{d}}(w) \right\},
$$
for a proper finite set $\setg{W} \subset \setc{X}$ and constant $c >0$. 
The important issue then is the fast computation of the coefficients $J^{\mathrm{d}\circledast\mathrm{d}}: \setg{W} \ra \R$ using the data points $\disc{J}:\setg{X} \ra \R$. 
This seems to be possible considering the fact that the operation $[\cdot]^{\circledast}$ closely resembles the ``distance transform'' \cite{Felzen12,Lucet09}. 

\subsection{The optimizer map in LLT} 
Consider a discrete function~$\disc{h}:\setd{X} \ra \R$ and its discrete conjugate $\lftdd{h}:\setg{Y} \ra \R$ computed using LLT for some finite set $\setg{Y}$.  
LLT is, in principle, capable of providing us with the optimizer mapping 
$$x\opt: \setg{Y} \ra \setd{X}: y \mapsto \argmax_{x \in \setd{X}} \{ \inner{x}{y} - \disc{h}(x) \},$$ 
where for each $y \in \setg{Y}$, we have $\lftdd{h}(y) = \inner{x\opt(y)}{y} - \disc{h}\big(x\opt(y)\big)$. 
This capability of LLT can be employed to address some of the drawbacks of the proposed d-CDP algorithm:

\textit{(i)~Avoiding approximate conjugation:} Let us first recall that by approximate (discrete) conjugation we mean that we first compute the conjugate function $\lftdd{h}:\setg{Y} \ra \R$ for some grid $\setg{Y}$ using the data points $\disc{h}:\setd{X} \ra \R$, 
and then for any $\tilde{y}$ (not necessarily belonging to $\setg{Y}$) we use the LERP extension $\llerp{\lftdd{h}}(\tilde{y})$ as an approximation for $\lftd{h}(\tilde{y})$. 
This approximation scheme is used in Algorithm~\ref{alg:d-CDP separ} (and all the extended algorithms in Appendix~\ref{app:ext} for computing the conjugate of the stage cost numerically). 
Indeed, it is possible to avoid this approximation and compute $\lftd{h}(\tilde{y})$ exactly by incorporating a smart search for the corresponding optimizer~$\tilde{x} \in \setd{X}$ for which $\lftd{h}(\tilde{y}) = \inner{\tilde{x}}{\tilde{y}} - h(\tilde{x})$. 
To be precise, if $\tilde{y} \in \co( \setd{\wt{Y}})$ for some subset~$\setd{\wt{Y}}$ of $\setg{Y}$, then $\tilde{x} \in \co\big( x\opt(\setd{\wt{Y}}) \big)$, where $x\opt: \setg{Y} \ra \setd{X}$ is the corresponding optimizer mapping. 
That is, in order to find the optimizer~$\tilde{x} \in \setd{X}$ corresponding to $\tilde{y}$, it suffices to search in the set~$\setd{X} \cap \co\big( x\opt(\setd{\wt{Y}}) \big)$, instead of the entire discrete primal domain~$\setd{X}$. 
This, in turn, can lead to a lower time requirement for computing the exact discrete conjugate function. 

\textit{(ii)~Extracting the optimal policy within the d-CDP algorithm:} 
The backward value iteration using the proposed d-CDP algorithms provides us \emph{only} with discrete costs $\disc{J}_t: \setg{X} \ra \R,\ t = 0,1, \ldots, T-1$. 
On the other hand, the backward value iteration using the d-DP algorithm \emph{also} provides us with control laws $\disc{\mu}_t: \setg{X} \ra \setd{U},\ t = 0,1, \ldots, T-1$. 
Application of these control laws can potentially render the computation of the control sequence for a given initial condition less costly. 
To address this issue, we have to look at the possibility of extracting the control laws within the d-CDP algorithm. 
A promising approach is to keep track of the dual pairs in each conjugate transform, i.e., the pairs $(x,y)$ for which $\inner{x}{y} = h(x)+\lftc{h}(y)$. 
This indeed seems possible considering the capability of LLT in providing the optimizer mapping $x\opt: \setg{Y} \ra \setg{X}$.


\appendix 

\section{Error of d-DP} \label{app:error d-DP}

In this section, we consider the error in the d-DP operator w.r.t. the DP operator.

\begin{Prop}[Error of d-DP]\label{prop:error d-DP} 
Consider the DP operator~$\dpo$~\eqref{eq:DP op} and the d-DP operator~$\ddpo$~\eqref{eq:d-DP op}.
Assume that the functions $J$ and $\lerp{\disc{J}}$ are Lipschtiz continuous, and $\lerp{\disc{J}}(x) = J(x)$ for all $x \in \setg{X}$. 
Then, 
\begin{equation*}\label{eq:error d-DP}
-e_1 \leq \ddpo [\disc{J}](x) - \dpo [J](x) \leq e_1 + e_2(x), \quad \forall x \in \setg{X},
\end{equation*}
where
\begin{align*}
&e_1 = \big[ \lip(J) + \lip(\lerp{\disc{J}}) \big] \cdot \dish (\setc{X}, \setg{X}), \\ 
&e_2(x) = \big[ \lip(J) + \lip(C) \big] \cdot \dish \big(\setc{U}(x), \setd{U}(x)\big).
\end{align*} 
\end{Prop}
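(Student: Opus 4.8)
The plan is to split the discrepancy $\ddpo[\disc{J}](x) - \dpo[J](x)$ into the two independent error sources hidden in the d-DP operator: (a) replacing the true cost-to-go $J$ by its extension $\lerp{\disc{J}}$ inside the objective, and (b) replacing the continuous admissible set $\setc{U}(x)$ by its discretization $\setg{U}(x)=\setc{U}(x)\cap\setg{U}$. Writing $g(u) \Let \cost(x,u)+J\big(\dyn(x,u)\big)$ and $\tilde{g}(u) \Let \cost(x,u)+\lerp{\disc{J}}\big(\dyn(x,u)\big)$, we have $\dpo[J](x)=\min_{u\in\setc{U}(x)}g(u)$ and $\ddpo[\disc{J}](x)=\min_{u\in\setg{U}(x)}\tilde{g}(u)$, so the two bounding directions must be treated asymmetrically, since shrinking the feasible set behaves differently from perturbing the objective.

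The first step is a pointwise extension estimate. For every $u\in\setc{U}(x)$, admissibility gives $\dyn(x,u)\in\setc{X}$; choosing a nearest grid point $\hat{z}\in\setg{X}$ to $z=\dyn(x,u)$ and using $\lerp{\disc{J}}(\hat z)=J(\hat z)$ together with the Lipschitz continuity of both $J$ and $\lerp{\disc{J}}$ yields, via the triangle inequality, $|\tilde g(u)-g(u)|=|\lerp{\disc{J}}(z)-J(z)|\le [\lip(J)+\lip(\lerp{\disc{J}})]\,\dist(z,\setg{X})\le e_1$, because $\dist(z,\setg{X})\le\dish(\setc{X},\setg{X})$.

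For the lower bound I would exploit $\setg{U}(x)\subseteq\setc{U}(x)$: pointwise $\tilde g(u)\ge g(u)-e_1\ge\dpo[J](x)-e_1$ for every $u\in\setg{U}(x)$, and minimizing over $\setg{U}(x)$ gives $\ddpo[\disc{J}](x)\ge\dpo[J](x)-e_1$; note that $e_2$ does not enter here, since restricting to a finite feasible set can only raise the optimal value. For the upper bound, let $u^\star\in\argmin_{u\in\setc{U}(x)}g(u)$ (attained by compactness of $\setc{U}(x)$), pick $\hat u\in\setg{U}(x)$ with $\norm{\hat u-u^\star}=\dist(u^\star,\setg{U}(x))\le\dish(\setc{U}(x),\setg{U}(x))$, and bound $\ddpo[\disc{J}](x)\le\tilde g(\hat u)\le g(\hat u)+e_1$. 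It then remains to control $g(\hat u)-g(u^\star)=[\cost(x,\hat u)-\cost(x,u^\star)]+[J(\dyn(x,\hat u))-J(\dyn(x,u^\star))]$ by $e_2(x)$, after which combining the two chains gives $\ddpo[\disc{J}](x)\le\dpo[J](x)+e_1+e_2(x)$.

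The main obstacle is this last estimate: the increment $J(\dyn(x,\hat u))-J(\dyn(x,u^\star))$ must be dominated by $\lip(J)\,\norm{\hat u-u^\star}$, which amounts to asserting that the composite map $u\mapsto J\big(\dyn(x,u)\big)$ inherits the Lipschitz constant $\lip(J)$ of $J$; this is where the precise hypotheses on the dynamics must be invoked. The cost increment is simpler, since moving only in the input argument changes $\cost$ by at most $\lip(\cost)\,\norm{\hat u-u^\star}$ directly from the joint Lipschitz constant. Once these two increments are summed and bounded by $[\lip(J)+\lip(\cost)]\,\dish(\setc{U}(x),\setg{U}(x))=e_2(x)$, the claim follows; all remaining steps are elementary manipulations of the two error sources through the triangle inequality.
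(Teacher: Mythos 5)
Your proposal follows essentially the same route as the paper's proof: split the discrepancy into the extension error ($e_1$) and the input-discretization error ($e_2$), bound the first by comparing $J$ and $\lerp{\disc{J}}$ at a nearest point of $\setg{X}$ (using that they agree on the grid), and bound the second by moving the continuous minimizer to a nearest point of $\setg{U}(x)$, with the lower bound needing only $e_1$ because $\setg{U}(x)\subseteq\setc{U}(x)$. The only structural difference is bookkeeping: the paper inserts an explicit intermediate operator $\idpo[J](x)=\min_u \lerp{Q}_x(u)$ with $\lerp{Q}_x(u)=\cost(x,u)+\lerp{\disc{J}}\big(\dyn(x,u)\big)$ and measures the input-discretization error with the \emph{extended} objective and its minimizer $\tilde u^\star$, whereas you measure it with the \emph{true} objective $g$ and its minimizer $u^\star$; both orderings of the two error sources yield $e_1+e_2(x)$.

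Regarding the obstacle you flag --- that $u\mapsto J\big(\dyn(x,u)\big)$ must inherit the Lipschitz constant $\lip(J)$, which implicitly requires $\dyn(x,\cdot)$ to be nonexpansive --- this is not a gap relative to the paper: the paper's own proof makes exactly the same leap, asserting $\lip(\lerp{Q}_x)\le \lip(J)+\lip(C)$ without justification. Indeed the paper's version is looser, since $\lerp{Q}_x$ involves $\lerp{\disc{J}}\circ\dyn(x,\cdot)$ rather than $J\circ\dyn(x,\cdot)$, so it would additionally need $\lip(\lerp{\disc{J}})\le\lip(J)$. The stated constant $e_2(x)$ can only be exactly right under such an implicit normalization of the input-to-state map, so your treatment, which at least names the hypothesis being used, is if anything more careful than the paper's.
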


\begin{proof}
Define $ Q_x(u) \Let  \cost(x,u) + J\big(\dyn(x,u)\big)$ and $\lerp{Q}_x(u) \Let \cost(x,u) + \lerp{\disc{J}}\big(\dyn(x,u)\big)$. 
Let us fix $x \in \setg{X}$. 
In what follows, we consider the effect of (i)~replacing $J$ with $\lerp{\disc{J}}$, and (ii)~minimizing over $\setd{U}$ instead of $\setc{U}(x)$, separately. 
To this end, we define the \emph{intermediate} DP operator
\begin{align*} 
& \idpo [J](x) \Let \min_{u} \ \lerp{Q}_x(u), \quad x \in \setg{X}.
\end{align*}

\noindent\textit{(i) Difference between $\dpo$ and $\idpo$:} Let 
$u\opt \in \argmin_{u} Q(x,u) \subseteq \setc{U}(x),$
so that $\dpo[J](x) = Q(x,u\opt)$ and $\idpo [J] (x) \leq \lerp{Q}(x,u\opt)$. 
Also, let $z\opt \in \argmin_{z \in \setg{X}} \norm{z - \dyn(x,u\opt)}$. 
Then,
\begin{align*}
\idpo[J](x) - \dpo[J] (x) &\leq \lerp{Q}(x,u\opt) - Q(x,u\opt) \\
&= \lerp{\disc{J}}\big(\dyn(x,u\opt)\big) - \lerp{\disc{J}}(z\opt) + J(z\opt) - J\big(\dyn(x,u\opt)\big),
\end{align*}
where we used the assumption that $\lerp{\disc{J}}(z\opt) = J(z\opt)$ for $z\opt \in \setg{X}$. Hence,
\begin{align*}
\idpo[J](x) - \dpo[J] (x) & \leq  \big[ \lip(J) + \lip(\lerp{\disc{J}}) \big] \cdot \norm{z\opt - \dyn(x,u\opt)}  \\
&=  \big[ \lip(J) + \lip(\lerp{\disc{J}}) \big] \cdot \min_{z \in \setg{X}} \norm{z - \dyn(x,u\opt)} \\
& \leq  \big[ \lip(J) + \lip(\lerp{\disc{J}}) \big] \cdot \max_{z' \in \setc{X}} \ \min_{z \in \setg{X}} \norm{z - z'} \\
&=\big[ \lip(J) + \lip(\lerp{\disc{J}}) \big] \cdot \dish (\setc{X}, \setg{X}) = e_1,
\end{align*}
where for the second inequality we used the fact that $\dyn(x,u\opt) \in \setc{X}$. 
We can use the same line of arguments by defining $\tilde{u}\opt \in \argmin_{u} \lerp{Q}(x,u)$, and $\tilde{z}\opt \in \argmin_{z \in \setg{X}} \norm{z - \dyn(x,\tilde{u}\opt)}$ to show that $\idpo[J](x) - \dpo[J] (x) \leq  e_1$. 
Combining these results, we have
\begin{align}\label{eq:diff idpo and dpo}
-  e_1 \leq \idpo[J](x) - \dpo[J] (x) \leq  e_1.
\end{align}

\noindent\textit{(ii) Difference between $\idpo$ and $\ddpo$:}  First note that, by construction, we have $\idpo[J](x) \leq \ddpo[\disc{J}] (x)$. 
Now, let 
$\tilde{u}\opt \in \argmin_{u} \lerp{Q}(x,u) \subseteq \setc{U}(x),$ 
so that $\idpo [J](x) = \lerp{Q}(x,\tilde{u}\opt)$. 
Also, let 
$\bar{u}\opt \in \argmin_{u \in \setd{U}(x)} \norm{u - \tilde{u}\opt}$, 
and note that $\ddpo [\disc{J}](x) \leq  \lerp{Q}(x,\bar{u}\opt)$. 
Then, using the fact that $\lerp{Q}$ is Lipschitz continuous, we have
\begin{align*}
0 \leq \ddpo [\disc{J}](x) - \idpo [J](x) &\leq \lerp{Q}(x,\bar{u}\opt) - \lerp{Q}(x,\tilde{u}\opt) 
 \leq \lip(\lerp{Q}_x) \cdot \norm{\bar{u}\opt - \tilde{u}\opt} \\
& \leq  \big[ \lip(J) + \lip(C) \big] \cdot \min_{u \in \setd{U}(x)} \norm{u - \tilde{u}\opt} 
\\ &
\leq  \big[ \lip(J) + \lip(C) \big] \cdot \max_{u' \in \setc{U}(x)} \ \min_{u \in \setd{U}(x)} \norm{u - u'} \\
& = \big[ \lip(J) + \lip(C) \big] \cdot \dish \big(\setc{U}(x), \setd{U}(x)\big) = e_2(x),
\end{align*}
Combining this last result with the inequality~\eqref{eq:diff idpo and dpo}, we derive the bounds of the proposition. 
\end{proof}

\section{Technical proofs} \label{app:proofs}

\subsection{Proof of Lemma~\ref{lem:conj vs. d-conj}} \label{proof:conj vs. d-conj}

Let $y \in \R^n$, and observe that (recall that $\disc{h} (x) = h(x)$ for all $x \in \setd{X} \subset \setc{X}$)
\begin{equation*}
\lftd{h}(y) = \max_{x \in \setd{X}} \{\inner{y}{x} - \disc{h}(x) \} \leq \max_{x \in \setc{X}} \{\inner{y}{x} - h(x) \} = \lftc{h}(y). 
\end{equation*} 
This settles the first inequality in~\eqref{eq:conj vs. d-conj I} and \eqref{eq:conj vs. d-conj II}. 
Also, observe that if $\partial \lftc{h}(y) = \emptyset$, then the upper bound in~\eqref{eq:conj vs. d-conj I} becomes trivial, i.e., $\lftc{h}(y) = +\infty$, $\lftd{h}(y) < +\infty$, and $\wt{e}_1 = +\infty$. 
Now, assume that $\partial \lftc{h}(y) \neq \emptyset$, and let $x \in \partial \lftc{h}(y)$ so that $h(x) + \lftc{h} (y) = \inner{y}{x}$ \cite[Prop.~5.4.3]{Bertsekas09}. 
Also, let $ \tilde{x} \in \argmin_{z \in \setd{X}} \norm{x-z}$, and note that $\lftd{h}(y) \geq \inner{y}{\tilde{x}} - \disc{h}(\tilde{x})$. 
Then,
\begin{align*}
\lftc{h}(y) - \lftd{h}(y) &\leq \inner{y}{x - \tilde{x}} - h(x) + \disc{h}(\tilde{x}) \\
&\leq \big[ \norm{y} + \lip \big( h; \{x\} \cup \setd{X} \big) \big] \cdot \norm{x - \tilde{x}} \\
&= \big[ \norm{y} + \lip \big( h; \{x\} \cup \setd{X} \big) \big] \cdot \dist(x,\setd{X}) .
\end{align*}
Hence, by minimizing over $x \in \partial \lftc{h}(y)$, we derive the upper bound provided in~\eqref{eq:conj vs. d-conj I}. 
Finally, the additional constraint of compactness of $\setc{X} = \dom(h)$ implies that $\partial  \lftc{h}(y) \cap \setc{X} \neq \emptyset$. 
Hence, we can choose $x \in \partial \lftc{h}(y) \cap \setc{X}$  and use Lipschitz-continuity of $h$ to write
\begin{align*}
\lftc{h}(y) - \lftd{h}(y) 
&\leq  \big[ \norm{y} + \lip \big( h; \{x\} \cup \setd{X} \big) \big] \cdot \dist(x,\setd{X})  \\
&\leq  \big[ \norm{y} + \lip (h) \big] \cdot \max_{z \in \setc{X}} \dist(z,\setd{X}) = \wt{e}_2(y,h,\setd{X}).
\end{align*}

\subsection{Proof of Lemma~\ref{lem:conj via lerp}} \label{proof:conj via lerp}

Let us first consider the case $y \in \co (\setg{Y})$. 
The value of the multi-linear interpolation $\llerp{\lftcd{h}} (y)$ is a convex combination of $\lftcd{h}(y^{(k)}) = \lftc{h}(y^{(k)})$ over the grid points $y^{(k)} \in \setg{Y}, \ k \in {1, \ldots, 2^n}$, located at the vertices of the hyper-rectangular cell that contains $y$  such that 
$$
y = \ssum_{k} \alpha^{(k)} \ y^{(k)} \ \ \ \text{and} \ \ \ \llerp{\lftcd{h}} (y) = \ssum_{k} \alpha^{(k)} \ \lftc{h} (y^{(k)}),
$$
where $\sum_k \alpha^{(k)} = 1$ and $\alpha^{(k)} \in [0,1]$. 
Then,
\begin{align}\label{eq:proof lem conj via lerp 1}
\lftc{h} (y) = \lftc{h} \left(\ssum_{k} \alpha^{(k)} \ y^{(k)} \right) \leq \ssum_{k} \alpha^{(k)} \ \lftc{h} (y^{(k)})  = \llerp{\lftcd{h}} (y),
\end{align}
where the inequality follows from convexity of $\lftc{h}$. 
Also, notice that
\begin{align*}
\llerp{\lftcd{h}} (y) &= \ssum_{k} \alpha^{(k)} \ \lftc{h}(y^{(k)}) = \ssum_{k} \alpha^{(k)} \ \max\limits_{x \in \setc{X}} \left\{ \inner{y^{(k)}}{x} - h (x) \right\} \\
& = \ssum_{k} \alpha^{(k)} \ \max\limits_{x \in \setc{X}} \left\{ \inner{y}{x} - h (x) + \inner{y^{(k)} - y}{x} \right\} \\
& \leq \ssum_{k} \alpha^{(k)} \ \max\limits_{x \in \setc{X}} \left\{ \inner{y}{x} - h (x) +  \norm{y^{(k)} - y} \cdot  \norm{x} \right\} \\
& \leq \ssum_{k} \alpha^{(k)} \ \max\limits_{x \in \setc{X}} \left\{ \inner{y}{x} - h (x) + \diam{\setc{X}} \cdot \dist(y,\setg{Y}) \right\}.
\end{align*} 
Then, using $\sum_k \alpha^k = 1$, we have
\begin{align}\label{eq:proof lem conj via lerp 2}
\llerp{\lftcd{h}} (y) &\leq \max_{x \in \setc{X}} \left\{ \inner{y}{x} - h (x) \right\} + \diam{\setc{X}} \cdot \dist(y,\setg{Y}) = \lftc{h} (y) + \diam{\setc{X}} \cdot \dist(y,\setg{Y}).
\end{align}
Combining the two inequalities~\eqref{eq:proof lem conj via lerp 1} and~\eqref{eq:proof lem conj via lerp 2} gives us the inequality~\eqref{eq:conj via lerp error} in the lemma.

We next consider the case $y \not\in \co (\setg{Y})$ under the extra assumption $\co(\setsg{Y}) \supseteq \setc{L}(h)$. 
Note that this assumption implies that (consult the notation preceding the lemma):
\begin{itemize}
\item  $\setc{L}(h)$ is bounded ($h$ is Lipschitz continuous); and,
\item  $y_i^1 < y_i^2 \leq \lip_i^-(h)$ and $\lip_i^+(h) \leq y_i^{Y_i-1} < y_i^{Y_i}$ for all $i \in \{1,\ldots,n\}$.
\end{itemize}
To simplify the exposition, we consider the two-dimensional case ($n=2$), while noting that the provided arguments can be generalized to higher dimensions.  
So, let $\setg{Y} = \setg{Y}_1 \times \setg{Y}_2$, where $\setg{Y}_i \ (i = 1,2)$ is the finite set of real numbers $y_i^1 < y_i^2 < \ldots < y_i^{Y_i}$ with $Y_i \geq 3$. 
Let us further simplify the argument by letting $y = (y_1,y_2) \not\in\co (\setg{Y}) $ be such that $ y_1 < y_1^{1}$ and $ y_2^{1} \leq y_2 \leq y_2^{2}$, 
so that computing $\llerp{\lftcd{h}}(y)$ involves extrapolation in the first dimension and interpolation in the second dimension; 
see Figure~\ref{fig:grid} for a visualization of this setup. 
Since the extension uses LERP, using the points depicted in Figure~\ref{fig:grid}, we can write
\begin{align}\label{eq:lem conj via lerp 4}
\llerp{\lftcd{h}} (y) = \alpha \ \llerp{\lftcd{h}} (y') + (1-\alpha) \ \llerp{\lftcd{h}} (y''),
\end{align} 
where $\alpha = (y_1^2-y_1)/(y_1^2-y_1^1)$, and (recall that $\lftcd{h}(y) = \lftc{h}(y)$ for $y \in \setg{Y}$)
\begin{align} \label{eq:lem conj via lerp 1}
\begin{array}{l}
\llerp{\lftcd{h}} (y') = \beta \ \lftcd{h} (y^{1,1}) + (1-\beta) \ \lftcd{h} (y^{1,2}) = \beta \ \lftc{h} (y^{1,1}) + (1-\beta) \ \lftc{h} (y^{1,2}), \\
\llerp{\lftcd{h}} (y'') = \beta \ \lftcd{h} (y^{1,2}) + (1-\beta) \ \lftcd{h} (y^{2,2}) = \beta \ \lftc{h} (y^{1,2}) + (1-\beta) \ \lftc{h} (y^{2,2}),
\end{array}
\end{align}
where $\beta = (y_2^2-y_2)/(y_2^2-y_2^1)$. 
In Figure~\ref{fig:grid}, we have also paired each of the points of interest in the dual domain with its corresponding maximizer in the primal domain. 
That is, for $\xi = y,y',y'',y^{1,1},y^{1,2},y^{1,2},y^{2,2}$, we have respectively identified $\eta = x,x',x'',x^{1,1},x^{1,2},x^{1,2},x^{2,2} \in \setc{X}$, where $\xi \in \partial h (\eta)$ so that
\begin{align}\label{eq:lem conj via lerp 3}
\lftc{h}(\xi) = \inner{\eta}{\xi} - h(\eta).
\end{align} 
We now list the \emph{implications} of the assumption $y_1^1 < y_1^2 \leq \lip_1^-(h)$; Figure~\ref{fig:implication} illustrates these implications in the one-dimensional case: 
\begin{itemize}
\item[I.1.] We have $\lftc{h}(y) = \alpha \ \lftc{h} (y') + (1-\alpha) \ \lftc{h} (y'')$.
\item[I.2.] We can choose the maximizers in the primal domain such that
\begin{itemize}
\item[I.2.1.] $x^{1,1} = x^{2,1}$, $x^{1,2} = x^{2,2}$, and $x = x' = x''$;
\item[I.2.2.] $x_1^{1,1} = x_1^{1,2} = x_1 = \min\limits_{(z_1,z_2) \in \setc{X}} z_1$.
\end{itemize}
\end{itemize}

\begin{figure}[t]
\begin{subfigure}{.5\textwidth}
  \centering
  \scalebox{.7}{\begin{tikzpicture}

\filldraw[fill=black!10!white, draw=black!10!white] (2,1) rectangle (7,8);

\path (0,0) node(y1)[below=4pt] {$y_1$}
	  (2,0) node(y11)[below] {$y_1^1$}
	  (4,0) node(y12)[below] {$y_1^2$}
	  (7,0) node(y1n)[below] {$y_1^{N_1}$}
	  (8,1) node(y21)[right] {$y_2^1$}
	  (8,3) node(y2)[right] {$y_2$}
	  (8,5) node(y22)[right] {$y_2^2$}
	  (8,8) node(y2n)[right] {$y_2^{N_2}$};

\draw[thin] 
(y11) -- (2,8) 
(y12) -- (4,8)
(y1n) -- (7,8)
(y21) -- (-1,1) 
(y22) -- (2,5)
(y2n) -- (2,8);

\draw[thick, dashed] 
(y1) -- (0,3) 
(y2) -- (0,3);
	  
\filldraw[blue] 
(0,3) circle (2pt) node[right=13pt, above=1pt] {$y \ [x]$}
(2,3) circle (2pt) node[right=16pt, above=1pt] {$y' \ [x']$}
(4,3) circle (2pt) node[right=18pt, above=1pt] {$y'' \ [x'']$}
(2,1) circle (2pt) node[right=24pt, above=1pt] {$y^{1,1} \ [x^{1,1}]$}
(4,1) circle (2pt) node[right=24pt, above=1pt] {$y^{2,1} \ [x^{2,1}]$}
(2,5) circle (2pt) node[right=24pt, above=1pt] {$y^{1,2} \ [x^{1,2}]$}
(4,5) circle (2pt) node[right=24pt, above=1pt] {$y^{2,2} \ [x^{2,2}]$};

\draw[ultra thin] (3,6.5) -- (1,7) node[left=10pt, above=1pt] {$\co(\setg{Y})$};

\end{tikzpicture}}
  \caption{Position of the point $y$ w.r.t. the grid $\setg{Y}$ }
  \label{fig:grid}
\end{subfigure}%
\begin{subfigure}{.5\textwidth}
  \centering
  \scalebox{.7}{\begin{tikzpicture}

\filldraw[fill=black!10!white, draw=black!10!white] (-3,0) rectangle (3,.3);

\path (-3,1) node(a) {}
	  (3,3) node(b) {};
	  
\draw[->,very thin] (-4,0) -- (4,0) node[anchor=west] {$x$};
\draw[->,very thin] (0,-4) -- (0,4) node[anchor=south] {$h(x)$};

\draw[thick, dashed] 
(-3,1) -- (-3,0) node[anchor=north] {$x^m$}
(3,3) -- (3,0) node[anchor=north] {$x^M$};

\draw[ultra thick] (a) .. controls (1,-1) .. (b);

\draw[semithick, red] 
(-4,1.5) -- (2,-1.5) node[anchor=west] {$s^-$}
(0.5,-2) -- (3.5,4) node[anchor=west] {$s^+$};

\draw[semithick, blue] 
(-4,1.6) -- (2,-2) node[anchor=west] {$y^2$}
(-4,1.8) -- (2,-3) node[anchor=west] {$y^1$}
(-4,2) -- (2,-4) node[anchor=west] {$y$};

\draw[thick, dashed] 
(0,-0.8) -- (-2,-0.8) node[anchor=east] {$\lftc{h}(y^2)$}
(0,-1.4) -- (-2,-1.4) node[anchor=east] {$\lftc{h}(y^1)$}
(0,-2) -- (-2,-2) node[anchor=east] {$\lftc{h}(y)$};

\draw[ultra thin] (0.5,0.15) -- (1,1.5) node[anchor=south] {$\setc{X}$};

\filldraw 
(a) circle (2pt)
(b) circle (2pt)
(0,-0.8) circle (2pt)
(0,-1.4) circle (2pt)
(0,-2) circle (2pt);

\end{tikzpicture}}
  \caption{Implications of the assumption}
  \label{fig:implication}
\end{subfigure}
\caption{Illustration of the proof of Lemma~\ref{lem:conj via lerp}. 
(a)~The dual grid $\setg{Y}$ and the position of the point $y$ w.r.t. the grid. 
The blue dots show the points of interest and their corresponding maximizer in the primal domain. E.g., ``$y \ [x]$'' implies that $y \in \partial h (x)$, where $x\in \setc{X}$, so that $\inner{x}{y} = h(x) + \lftc{h}(y)$. 
(b)~Illustration of the implications of the assumption $y^1 < y^2 \leq s^- = \lip^-(h)$ in the one-dimensional case. 
The colored (red and blue) variables denote the slope of the corresponding lines. 
Note that $\{y , y^1,y^2\} \subset \partial h (x^m)$, where $x^m = \min_{x \in \setc{X}} x$. 
Indeed, for all $y \leq s^-$, the conjugate $\lftc{h}(y) = \inner{x^m}{y} - h(x^m)$ is a linear function with slope $x^m$. 
In particular, for $y < y^1$, we have $\lftc{h}(y) = \alpha \lftc{h}(y^1) + (1-\alpha) \lftc{h}(y^2)$, where $\alpha = (y^2-y)/(y^2-y^1)$.  }
\label{fig:proof of conj via lerp}
\end{figure}

With these preparatory discussions, we can now consider the error of extrapolative discrete conjugation at the point $y$. 
In this regard, first note that $\{y', y''\} \subset \co(\setg{Y})$, and hence we can use the result of the first part of the lemma to write
\begin{eqnarray} \label{eq:lem conj via lerp 2}
\begin{array}{l} 
\llerp{\lftcd{h}} (y') = \lftc{h}(y') + e', \quad
\llerp{\lftcd{h}} (y'') = \lftc{h}(y'') + e'',
\end{array}
\end{eqnarray} 
where $\{e',e'' \} \subset [0, \diam{\setc{X}} \cdot \dish(\{y', y''\},\setg{Y})]$. 
We claim that these error terms are equal. 
Indeed, from \eqref{eq:lem conj via lerp 1} and \eqref{eq:lem conj via lerp 2}, we have
\begin{align*}
e' - e'' = \beta  \left[ \lftc{h} (y^{1,1}) - \lftc{h} (y^{2,1}) \right]+ (1-\beta) \left[ \lftc{h} (y^{1,2}) - \lftc{h} (y^{2,2}) \right] + \lftc{h}(y'') - \lftc{h}(y').
\end{align*}
Then, using the pairings in~\eqref{eq:lem conj via lerp 3} and the implication I.2, we can write 
\begin{align*}
e' - e'' &\overset{(I.2.1)}{=} \beta  \inner{x^{1,1}}{y^{1,1}-y^{2,1}}+ (1-\beta) \inner{x^{1,2}}{y^{1,2}-y^{2,2}} + \inner{x}{y''-y'} \\
& \ \ = \ \ \beta  \inner{x^{1,1}}{(y_1^1- y_1^2, 0)}+ (1-\beta) \inner{x^{1,2}}{(y_1^1- y_1^2, 0)} + \inner{x}{(y_1^2 - y_1^1 , 0)} \\
& \ \ = \ \  \left( \beta x_1^{1,1} + (1-\beta)x_1^{1,2} - x_1 \right) (y_1^1- y_1^2) \overset{(I.2.2)}{=} 0.
\end{align*}
With this result at hand, we can employ the equality~\eqref{eq:lem conj via lerp 4} and the implication I.1 to write
\begin{align*}
\llerp{\lftcd{h}} (y) - \lftc{h}(y) &= \alpha \left[ \llerp{\lftcd{h}} (y') - \lftc{h}(y') \right] + (1-\alpha) \left[ \llerp{\lftcd{h}} (y'') - \lftc{h}(y'') \right] = \alpha e' + (1-\alpha) e'' = e'.
\end{align*}
That is, 
\begin{align*}
0\leq \llerp{\lftcd{h}} (y) - \lftc{h}(y) \leq \diam{\setc{X}} \cdot \dish(\{y', y''\},\setg{Y}) \leq \diam{\setc{X}} \cdot \dish\big(\co (\setg{Y}),\setg{Y}\big),
\end{align*}
where for the last inequality we used the fact that $\{y', y''\} \subset \co(\setg{Y})$.

\subsection{Proof of Corollary~\ref{cor:disc conj via lerp}} \label{proof:disc conj via lerp}

The first statement immediately follows from Lemma~\ref{lem:conj via lerp} since the finite set $\setd{X}$ is compact. 
For the second statement, the extra condition $\co(\setsg{Y}) \supseteq \setc{L}(h)$ has the same implications as the ones provided in the proof of Lemma~\ref{lem:conj via lerp} in Appendix~\ref{proof:conj via lerp}. 
Hence, following the same arguments, we can show that provided bounds hold for all $y \in \R^n$ under the given condition.

\subsection{Proof of Lemma~\ref{lem:CDP op}} \label{proof:CDP op}

Using the definition of conjugate transform, we have
\begin{align*}
\cdpo [J](x) &= \max_{y \in \R^n} \ \min_{u, z \in \R^n} \left\{\cost(x,u) + J (z) + \inner{y}{\dynx(x) + \dynu(x) u-z}  \right\} \\
&= \max_{y} \left\{ \inner{y}{\dynx(x)} - \max_{u} \left[ \inner{-\dynu(x)\tr y}{u} - \cost(x,u) \right] - \max_{ z} \left[ \inner{y}{z} -J (z) \right]  \right\} \\
&= \max_{y } \left\{ \inner{y}{\dynx(x)} - \lftc{\cost_x}(-\dynu(x)\tr y) -\lftc{J} (y)  \right\} \\
&= \max_{y } \left\{ \inner{y}{\dynx(x)} - \phi_x(y)  \right\} 
= \lftc{\phi_x}\big(\dynx(x)\big).
\end{align*}

\subsection{Proof of Theorem~\ref{thm:complexity d-CDP Alg 1}} \label{proof:complexity d-CDP Alg 1}

In what follows, we provide the time complexity of each line of Algorithm~\ref{alg:d-CDP general}.  
The LLT of line~\ref{line_alg_g:LLT of J} requires $\ord (X+Y)$ operations; see Remark~\ref{rem:complexity of LLT}. 
By Assumption~\ref{As:conj cost func alg 1}, computing $\disc{\psi}_x$ in line~\ref{line_alg_g:h} has a complexity of $\ord (Y)$. 
The minimization via enumeration in line~\ref{line_alg_g:output} also has a complexity of $\ord (Y)$. 
This, in turn, implies that the \texttt{for loop} over~$x \in \setg{X}$ requires $\ord (XY)$ operations. 
Hence, the total time complexity of $\ord (XY)$. 

\subsection{Proof of Proposition~\ref{prop:d-CDP op}} \label{proof:d-CDP op}

We can use the representation~\eqref{eq:d-CDP op} and the definition~\eqref{eq:conj cost func alg 1} to obtain
\begin{align*}
\dcdpo [\disc{J}](x) &= \max_{y \in \setg{Y}} \ \{ \inner{\dynx(x)}{y} - \disc{\psi}_x(y) \} \\
& = \max_{y \in \setg{Y}} \ \left\{ \inner{\dynx(x)}{y} - \lftc{\cost_x}(-\dynu(x)\tr y) - \lftdd{J}(y) \right\} \\ 
& = \max_{y \in \setg{Y}} \ \left\{ \inner{\dynx(x)}{y} - \max_{u \in \dom\cost(x,\cdot)} \left[ \inner{-\dynu(x)\tr y}{u} - \cost(x,u) \right] - \lftdd{J}(y) \right\} \\ 
& = \max_{y \in \setg{Y}} \  \min_{u \in \dom\cost(x,\cdot)} \left\{ \cost(x,u) +  \inner{y}{\dyn (x,u)} - \lftdd{J}(y)  \right\},
\end{align*} 
Since $C$ is convex in $u$ and the mapping $\dyn$ is affine in $u$, the objective function of this maximin problem is convex in $u$, with $\dom \big(C(x,\cdot)\big)$ being compact. 
Also, the objective function is Ky Fan concave in $y$, which follows from the convexity of $\lftd{J}$. 
Then, by the Ky Fan's Minimax Theorem (see, e.g., \cite[Thm.~A]{Joo82}), we can swap the maximization and minimization operators to obtain
\begin{align*}
\dcdpo [\disc{J}](x) & =  \min_{u \in \dom C(x,\cdot)} \  \max_{y \in \setg{Y}} \  \left\{ \cost(x,u) +  \inner{y}{\dyn(x,u)} - \lftdd{J}(y)  \right\} \nonumber \\
& = \min_{u } \left\{ \cost(x,u) + \bcdd{J}\big( \dyn(x,u) \big) \right\}.
\end{align*}

\subsection{Proof of Theorem~\ref{thm:error d-CDP Alg 1}} \label{proof:error d-CDP Alg 1}

Fix $x \in \setg{X}$ and observe that
\begin{align}
\dpo [J](x) - \dcdpo [\disc{J}](x) = \left[ \dpo [J](x) - \cdpo [J](x) \right] + \left[ \cdpo [J](x) - \dcdpo [\disc{J}](x) \right]. \label{eq:error d-CDP alg 1 - (0)}
\end{align}
Let us first note that the convexity $\cost :\setc{X} \times \setc{U} \ra \Ru $ (in $u$) and $J:\setc{X} \ra \R$ implies that the duality gap $\cdpo [J]- \dcdpo [\disc{J}]$ in \eqref{eq:error d-CDP alg 1 - (0)} is zero. 
Indeed, following a similar argument as the one provided in the proof of Proposition~\ref{prop:d-CDP op} in Appendix~\ref{proof:d-CDP op}, and using Sion's Minimax Theorem (see, e.g., \cite[Thm.~3]{Simons95}), we can show that 
$$
\cdpo [J](x) = \min_{u} \left\{ \cost(x,u) + \bcc{J} \big( \dyn(x,u) \big) \right\}, \quad x \in \setc{X}.
$$
Then, since $J$ is a proper, closed, convex function, we have $\bcc{J} = J$, and hence $\cdpo [J] = \dpo [J]$. 
We next consider the discretization error $\cdpo [J]- \dcdpo [\disc{J}]$ in \eqref{eq:error d-CDP alg 1 - (0)}. 
From \eqref{eq:CDP op conj a} and \eqref{eq:d-CDP op a}, we have 
\begin{align}
\cdpo [J](x) - \dcdpo [\disc{J}](x) &= \lftc{\phi_x} \big(\dynx(x) \big) - \lftd{\psi_x} \big(\dynx(x) \big) \nonumber \\
&= \left[ \lftc{\phi_x} \big(\dynx(x) \big) - \lftd{\phi_x} \big(\dynx(x) \big) \right] + \left[ \lftd{\phi_x} \big(\dynx(x) \big) - \lftd{\psi_x} \big(\dynx(x) \big) \right], \label{eq:error d-CDP alg 1 - (01)}
\end{align}
where $\disc{\phi_x}:\setg{Y} \ra \R $ is the discretization of $\phi_x:\R^n \ra \R$. 
For $\lftc{\phi_x} - \lftd{\phi_x} $ in \eqref{eq:error d-CDP alg 1 - (01)}, 
by Lemma~\ref{lem:conj vs. d-conj}, we have
\begin{align*}
0 \leq \lftc{\phi_x} \big(\dynx(x) \big) - \lftd{\phi_x} \big(\dynx(x) \big) &\leq \wt{e}_1(\dynx(x), \phi_x, \setg{Y}) \\
&= \min\limits_{y \in \partial  \lftc{\phi_x}(\dynx(x) )} \bigg\{ \big[ \norm{\dynx(x)} + \lip \big( \phi_x; \{y\} \cup \setg{Y} \big) \big] \cdot \dist(y,\setg{Y}) \bigg\} \\ 
&\leq \min\limits_{y \in \partial  \dpo [J] (x)} \bigg\{ \big[ \norm{\dynx(x)} + \norm{\dynu(x)} \cdot \diam{\setc{U}} + \diam{\setc{X}} \big] \cdot \dist(y,\setg{Y}) \bigg\},
\end{align*}
where we used the fact that $\lftc{\phi_x} \big(\dynx(\cdot)\big) = \cdpo [J] (\cdot) = \dpo [J] (\cdot)$, and 
\begin{align*}
\lip \big( \phi_x(\cdot) \big) &\leq \lip \big( \lftc{C_x} (-\dynu(x)\tr \cdot) \big) + \lip \big( \lftc{J} (\cdot) \big) \\
& \leq  \norm{\dynu(x)} \cdot \lip( \lftc{C_x} ) + \lip ( \lftc{J} ) \\
&\leq \norm{\dynu(x)} \cdot \diam{\dom(C(x,\cdot))} +  \diam{\dom(J)} \\
& \leq \norm{\dynu(x)} \cdot \diam{\setc{U}} + \diam{\setc{X}}.
\end{align*}
Hence, 
\begin{align} \label{eq:error d-CDP alg 1 - (1)}
0 \leq \lftc{\phi_x} \big(\dynx(x) \big) - \lftd{\phi_x} \big(\dynx(x) \big) &\leq  \big[ \norm{\dynx(x)} + \norm{\dynu(x)} \cdot \diam{\setc{U}} + \diam{\setc{X}} \big] \cdot \min\limits_{y \in \partial  \dpo [J] (x)} \dist(y,\setg{Y}) \nonumber \\
&= \big[ \norm{\dynx(x)} + \norm{\dynu(x)} \cdot \diam{\setc{U}} + \diam{\setc{X}} \big] \cdot \dist\big(\partial  \dpo [J] (x),\setg{Y}\big) = e_1(x)
\end{align}
For $\lftd{\phi_x} - \lftd{\psi_x}$ in \eqref{eq:error d-CDP alg 1 - (01)}, first observe that for each $y \in \setg{Y}$, we have (see \eqref{eq:CDP op conj b} and \eqref{eq:d-CDP op b}, and recall that $\disc{h}$ is simply a sampled version of $h$)
\begin{align*}
\disc{\phi}_x(y) -  \disc{\psi}_x(y) = \lftcd{J}(y) - \lftdd{J}(y) = \lftc{J}(y) - \lftd{J}(y).
\end{align*}
Moreover, we can use Lemma~\ref{lem:conj vs. d-conj}, and the fact that $\dom(J) = \setc{X}$ is compact, to write 
\begin{align*}
0 \leq \lftc{J}(y) - \lftd{J}(y) & \leq  \left[ \norm{y} +  \lip(J) \right] \cdot \dish (\setc{X}, \setg{X}) \\
& \leq  \left[ \diam{\setg{Y}} +  \lip(J) \right] \cdot \dish (\setc{X}, \setg{X})  = e_2.
\end{align*}
That is, 
$$0 \leq \disc{\phi}_x(y) -  \disc{\psi}_x(y) \leq e_2, \quad \forall y \in \setg{Y}. $$ 
Then, using the definition of discrete conjugate, we have
\begin{align*} 
0 \leq \lftd{\psi_x} \big(\dynx(x) \big) - \lftd{\phi_x} \big(\dynx(x) \big) \leq  e_2.
\end{align*}
Combining the last inequality with the inequality~\eqref{eq:error d-CDP alg 1 - (1)} completes the proof.  

\subsection{Proof of Theorem~\ref{thm:complexity d-CDP Alg 2}} \label{proof:complexity d-CDP Alg 2}

In what follows, we provide the time complexity of each line of Algorithm~\ref{alg:d-CDP separ}. 
The LLT of line~\ref{line_alg_s:LLT of J} requires $\ord (X+Y)$ operations; see Remark~\ref{rem:complexity of LLT}. 
By Assumption~\ref{As:conj cost func alg 2}, computing $\disc{\psi}$ in line~\ref{line_alg_s:h} has a complexity of $\ord (Y)$. 
The LLT of line~\ref{line_alg_s:LLT of h} requires $\ord (Y+Z)$ operations. 
The approximation of line~\ref{line_alg_s:LERP of h} using LERP has a complexity of $\ord (\log Z)$; see Remark~\ref{rem:complexity of LERP}. 
Hence, the \texttt{for loop} over~$x \in \setg{X}$ requires $\ord (X \log Z) = \wt{\ord}(X)$ operations. 
The time complexity of the whole algorithm can then be computed by adding all the aforementioned complexities. 

\subsection{Proof of Theorem~\ref{thm:error d-CDP Alg 2}} \label{proof:error d-CDP Alg 2}

Let $\dcdpo$ denote the output of the implementation of the d-CDP operator~\eqref{eq:d-CDP op separ} via Algorithm~\ref{alg:d-CDP general}. 
Note that the computation of the modified d-CDP operator~$\mdcdpo$~\eqref{eq:d-CDP op separ modified} via Algorithm~\ref{alg:d-CDP separ} differs from that of the d-CDP operator~$\dcdpo$~\eqref{eq:d-CDP op separ} via Algorithm~\ref{alg:d-CDP general} only in the last step. 
To see this, note that $\dcdpo$ \emph{exactly} computes~
$\lftd{\psi}\big( \dynx(x) \big)$ for~$x \in \setg{X}$ (see line~\ref{line_alg_g:output} of Algorithm~\ref{alg:d-CDP general}). 
However, in $\mdcdpo$, the \emph{approximation}~$\llerp{\lftdd{\psi}} \big( \dynx(x) \big)$ is used (see line~\ref{line_alg_s:LERP of h} of Algorithm~\ref{alg:d-CDP separ}), where the approximation uses LERP over the data points~$\lftdd{\psi}:\setg{Z} \ra \R$. 
By Corollary~\ref{cor:disc conj via lerp} and the assumption $\co (\setg{Z}) \supseteq \dynx(\setg{X})$, this leads to an over-approximation of~$\lftd{\psi}$, with the upper bound 
$$
e_3 = \diam{\setg{Y}} \cdot \max_{x \in \setg{X}} \dist\big(\dynx (x), \setg{Z} \big) = \diam{\setg{Y}} \cdot \dish\big(\dynx (\setg{X}), \setg{Z} \big).
$$ 
Hence, compared to $\dcdpo$, the operator $\mdcdpo$ is an over-approximation with the difference bounded by $e_3$, i.e.,  
\begin{equation}\label{eq:error d-CDP alg 2 - (1)}
0 \leq \mdcdpo [\disc{J}] (x) - \dcdpo [\disc{J}] (x) \leq  e_3, \quad \forall x \in \setg{X}.
\end{equation}
The result then follows from Theorem~\ref{thm:error d-CDP Alg 1}. 
Indeed, using the definition of $\dcdpo$~\eqref{eq:d-CDP op separ}, we can define
\begin{align*}
& \disc{\psi}(y) \Let \lftc{\costu}(-B\tr y) + \lftdd{J}(y), & y \in \setg{Y}, \\
& \idcdpo [\disc{J}](x) \Let \dcdpo [\disc{J}](x) - \costx(x) = \lftd{\psi} \big( \dynx(x) \big), &  x \in \setg{X}.
\end{align*}
Similarly, using the DP operator~\eqref{eq:DP op separ}, we can also define 
$$\iidpo [J](x) \Let \dpo [J](x)- \costx(x) = \min_{u}\left\{ \costu(u) + J \big(\dyn(x,u)\big) \right\}.$$
Then, by Theorem~\ref{thm:error d-CDP Alg 1}, for all $x \in \setg{X}$, it holds that  
\begin{align} \label{eq:error d-CDP alg 2 - (2)}
\begin{array}{ll}
& - e_2 \leq \iidpo [J](x) - \idcdpo [\disc{J}](x) = \dpo [J](x) - \dcdpo [\disc{J}](x) \leq e^m_1(x),
\end{array}
\end{align}
where $e_2$ is given in \eqref{eq:err terms d-CDP Alg 1}, and 
\begin{align*}
e^m_1(x) &= \big[ \norm{\dynx(x)} + \norm{B} \cdot\diam{\setc{U}} + \diam{\setc{X}} \big] \cdot \dist\big(\partial  \iidpo [J] (x),\setg{Y}\big) \\
&=  \big[ \norm{\dynx(x)} + \norm{B} \cdot \diam{\setc{U}} + \diam{\setc{X}} \big] \cdot \dist\big(\partial  \big(\dpo [J] - \costx \big) (x),\setg{Y}\big).
\end{align*}
Combining the inequalities \eqref{eq:error d-CDP alg 2 - (1)} and \eqref{eq:error d-CDP alg 2 - (2)} completes the proof.


\section{Extended algorithms \& further numerical examples} \label{app:num example} 

In this section, we consider the extensions of the proposed d-CDP algorithm and their implications on its complexity. 
In particular, the extension to stochastic systems with additive disturbance and the possibility of numerical computation of the conjugate of the (input-dependent) stage cost are discussed. 
The pseudo-codes for the multistep implementation of the extended d-CDP algorithms are provided in Algorithms~\ref{alg:d-CDP general extended} and~\ref{alg:d-CDP separ extended}. 
Moreover, we showcase the application of these algorithms in solving the optimal control problem for a simple epidemic model and a noisy inverted pendulum.

\subsection{Extensions of d-CDP algorithm}\label{subsec:extension} 

\subsubsection{Stochastic systems} \label{subsec:extension stochastic}
Consider the stochastic version of the dynamics~\eqref{eq:dyn} described by
\begin{equation*} \label{eq:dyn stoch}
x_{t+1} = \dyn(x_t,u_t) + w_t,
\end{equation*}
where $w_t, \ t =0,\ldots,T-1$, are independent, \emph{additive} disturbances. 
Then, the stochastic version of the CDP operator $\cdpo$ \eqref{eq:CDP op conj} still reads the same, except it takes $\ef{J} (\cdot) \Let \EE_w J(\cdot+w)$ as the input, where $\EE_w$ is the expectation operator w.r.t. $w$. 
In other words, we need to first pass the cost-to-go~$J$ through the ``expectation filter'', and then feed it to the CDP operator. 
The extension of the d-CDP algorithms for handling this type of stochasticity involves similar considerations as we explain next. 

Let us first consider the extension of the d-CDP Algorithm~\ref{alg:d-CDP general} for stochastic dynamics with additive disturbance. 
For illustration, assume that the disturbances are i.i.d. and belong to a finite set $\setd{W} \subset \R^n$, with a known probability mass function (p.m.f.)~$p:  \setd{W} \ra [0,1]$.\footnote{The set $\setd{W}$ can indeed be considered as a discretization of a bounded set of disturbances. 
Of course, one can modify the algorithm by incorporating other schemes for computing/approximating the expectation operation.} 
The corresponding extension then involves applying $\dcdpo$~\eqref{eq:d-CDP op} to $\disc{\ef{J}}(x):\setg{X} \ra \Ru$ given by 
\begin{equation}\label{eq:expectation op}
\disc{\ef{J}}(x) = \ssum_{w \in \setd{W}} p(w) \cdot \lerp{\disc{J}}(x+w),
\end{equation}
where $\lerp{[\cdot]}$ is an extension operator (see also line~\ref{line_alg_ge:expect} of Algorithm~\ref{alg:d-CDP general extended}). 
Assuming that a single evaluation of the employed extension operator in \eqref{eq:expectation op} requires $\ord(E)$ operations, 
the stochastic version of the d-CDP Algorithm~\ref{alg:d-CDP general} that utilizes the scheme described above requires $\ord\big(X(WE+Y)\big)$ operations ($\ord(XWE)$ for computing $\disc{\ef{J}}$ and $\ord(XY)$ for applying $\dcdpo$). 
The same extension can be applied to the modified d-CDP operator~$\mdcdpo$~\eqref{eq:d-CDP op separ modified}, as it is done in line~\ref{line_alg_se:expect} of Algorithm~\ref{alg:d-CDP separ extended}. 
In particular, the stochastic version of the modified d-CDP Algorithm~\ref{alg:d-CDP separ} that uses this scheme requires $\wt{\ord}(XWE+Y+Z)$ operations in each iteration. 
On the other hand, the stochastic version of the d-DP operation, described by
\begin{equation}\label{eq:d-DP stoch}
 \ddpo_{\text{s}} [\disc{J}](x) \Let \min_{u \in \setd{U}} \left\{ \cost(x,u) + \EE_w \left[ \lerp{\disc{J}}\big(\dyn(x,u)+w\big) \right]\right\}, \quad x \in \setg{X},
\end{equation} 
has a time complexity of $\ord(XUWE)$. 

\begin{algorithm}[b]
\begin{small}
   \caption{Multistep implementation of the extended d-CDP Algorithm~\ref{alg:d-CDP general}}
   \label{alg:d-CDP general extended}
\begin{algorithmic}[1]
	\REQUIRE dynamics~$\dynx: \R^n \ra \R^n, \ \dynu : \R^n \ra \R^{n\times m}$; \\
	discrete stage cost~$\disc{\cost}(x,\cdot): \setg{U} \ra \Ru$ for $x \in \setg{X}$; \\
	discrete terminal cost~$\disc{\cost}_T: \setg{X} \ra \R$; \\
	discrete disturbance~$\setd{W}$ and its p.m.f.~$p: \setd{W} \ra [0,1]$.
	\ENSURE discrete costs-to-go~$\disc{J}_t: \setg{X} \ra \R, \ t=0,1,\ldots, T$.

  	\FOR{each $x \in \setg{X}$} 
    
 		\STATE construct the grid $\setg{V}(x)$; \label{line_alg_ge:const V}

  		\STATE use LLT to compute $\lftdd{\cost_x}: \setg{V}(x) \ra \R$ from $\disc{\cost}(x,\cdot): \setg{U} \ra \Ru$; \label{line_alg_ge:LLT of g}
  		
  	\ENDFOR
  	
  	\STATE $\disc{J}_T(x) \gets \disc{\cost}_T(x)$ for $x \in \setg{X}$;
  	  	
  	\FOR{$t= T, \ldots, 1$} \label{line_alg_ge:for loop over t}

  		\STATE  $\disc{J}_{\mathrm{w},t}(x) \gets \sum_{w \in \setd{W}} p(w) \cdot \lerp{\disc{J_t}}(x+w)$ for $x \in \setg{X}$; \label{line_alg_ge:expect}

  		\STATE construct the grid $\setg{Y}$; \label{line_alg_ge:const Y}
    
    	\STATE use LLT to compute $\lftdd{J}_{\mathrm{w},t}: \setg{Y} \ra \R$ from $\disc{J}_{\mathrm{w},t}: \setg{X} \ra \R$; \label{line_alg_ge:LLT of J}

    	\FOR{each $x \in \setg{X}$} \label{line_alg_ge:for loop over x}

   			\FOR{each $y \in \setg{Y}$}
   				
   				\STATE use LERP to compute $\llerp{\lftdd{\cost_x}}(-\dynu(x)\tr y)$ from $\lftdd{\cost_x}: \setg{V}(x) \ra \R$; \label{line_alg_ge:before h}
   			
    			\STATE  $\disc{\psi}_x(y) \gets \llerp{\lftdd{\cost_x}}(-\dynu(x)\tr y) + \lftdd{J}_{\mathrm{w},t}(y)$; \label{line_alg_ge:h}
    		
  				\ENDFOR
  				
  				\STATE $\disc{J}_{t-1}(x) \gets \max\limits_{y \in \setg{Y}}\{\inner{\dynx(x)}{y} - \disc{\psi}_x(y) \}$. \label{line_alg_ge:output}

  		\ENDFOR

  	\ENDFOR

\end{algorithmic}
\end{small}
\end{algorithm}

\begin{algorithm}
\begin{small}
   \caption{Multistep implementation of the extended d-CDP Algorithm~\ref{alg:d-CDP separ}}
   \label{alg:d-CDP separ extended}
\begin{algorithmic}[1]
	\REQUIRE dynamics~$\dynx: \R^n \ra \R^n, \ B \in \R^{n\times m}$; \\
	discrete state cost~$\disc{\costx}: \setg{X}  \ra \R$; \\
	discrete input stage cost~$\disc{\costu}: \setg{U} \ra \R$; \\
	discrete terminal cost~$\disc{\cost}_T: \setg{X} \ra \R$; \\
	discrete disturbance~$\setd{W}$ and its p.m.f.~$p: \setd{W} \ra [0,1]$.
	\ENSURE discrete costs-to-go~$\disc{J}_t: \setg{X} \ra \R, \ t=0,1,\ldots, T$.
  	
    \STATE construct the grid $\setg{V}$; \label{line_alg_se:const V}

  	\STATE use LLT to compute $\lftdd{\costu}: \setg{V} \ra \R$ from $\disc{\costu}: \setg{U} \ra \R$; \label{line_alg_se:LLT of g}
  	
  	\STATE construct the grid $\setg{Z}$;
  	\STATE $\disc{J}_T(x) \gets \disc{\cost}_T(x)$ for $x \in \setg{X}$;
  	  	
  	\FOR{$t= T, \ldots, 1$} \label{line_alg_se:for loop over t}
  	
  		\STATE  $\disc{J}_{\mathrm{w},t}(x) \gets \sum_{w \in \setd{W}} p(w) \cdot \lerp{\disc{J_t}}(x+w)$ for $x \in \setg{X}$;\label{line_alg_se:expect}
  	
  		\STATE construct the grid $\setg{Y}$; \label{line_alg_se:const Y}
    
    	\STATE use LLT to compute $\lftdd{J}_{\mathrm{w},t}: \setg{Y} \ra \R$ from $\disc{J}_{\mathrm{w},t}: \setg{X} \ra \R$; \label{line_alg_se:LLT of J}
    	
    	\FOR{each $y \in \setg{Y}$}
   				
   			\STATE use LERP to compute $\llerp{\lftdd{\costu}}(-B\tr y)$ from $\lftdd{\costu}: \setg{V} \ra \R$; \label{line_alg_se:before h}
   			
    		\STATE  $\disc{\psi}(y) \gets \llerp{\lftdd{\costu}}(-B\tr y) + \lftdd{J}_{\mathrm{w},t}(y)$; \label{line_alg_se:h}
    		
		\ENDFOR
  
    	\STATE use LLT to compute $\lftdd{\psi}: \setg{Z} \ra \R$ from  $\disc{\psi}: \setg{Y} \ra \R$; \label{line_alg_se:LLT of h}
    
    	\FOR{each $x \in \setg{X}$}
    
 			\STATE  use LERP to compute $\llerp{\lftdd{\psi}}\big( \dynx(x) \big)$ from $\lftdd{\psi}: \setg{Z} \ra \R$; \label{line_alg_se:LERP of h}
     	
    		\STATE $\disc{J}_{t-1}(x) \gets \disc{\costx} (x) + \llerp{\lftdd{\psi}}\big( \dynx(x) \big)$;
    	
   		\ENDFOR

  	\ENDFOR

\end{algorithmic}
\end{small}
\end{algorithm} 

\subsubsection{Numerical computation of $\lftc{\cost_x}$ and $\lftc{\costu}$} \label{subsec:extension num conj}
Assumptions~\ref{As:conj cost func alg 1} and \ref{As:conj cost func alg 2} on the availability of the conjugate of the (input-dependent) stage cost can be restrictive. 
Alternatively, we can use \emph{approximate discrete conjugation} for computing these objects numerically. 
Let us begin with describing such a scheme for numerical computation of $\lftc{\cost_x}$ in the d-CDP operator~$\dcdpo$~\eqref{eq:d-CDP op}. 
The scheme has two main steps (see also lines~\ref{line_alg_ge:const V}-\ref{line_alg_ge:LLT of g} and \ref{line_alg_ge:before h}-\ref{line_alg_ge:h} of Algorithm~\ref{alg:d-CDP general extended}):
\begin{itemize} 
\item \textbf{Step 1.} For each $x \in \setg{X}$: 
\begin{itemize}
\item[1.a.] compute/evaluate $\disc{\cost}_x = \disc{\cost}(x,\cdot): \setg{U} \ra \Ru$, where $\setg{U}$ is a \emph{grid-like} discretization of $\setc{U}$;
\item[1.b.] construct the dual grid $\setg{V}(x)$ using the method described below; and,
\item[1.c.] apply LLT to compute $\lftdd{\cost_x}: \setg{V}(x) \ra \R$ using the data points $\disc{\cost}_x: \setg{U} \ra \Ru$. 
\end{itemize}
\item \textbf{Step 2.} For each $y \in \setg{Y}$: use LERP to compute $\llerp{\lftdd{\cost_x}}(-\dynu(x)\tr y)$ from the data points $\lftdd{\cost_x}: \setg{V}(x) \ra \R$, and use the result in \eqref{eq:d-CDP op b} as an approximation of $\lftc{\cost_x}(-\dynu(x)\tr y)$.
\end{itemize} 

This scheme introduces some error that mainly depends on the grids $\setg{U}$ and $\setg{V}(x)$ used for the discretization of the input space and its dual domain, respectively. 
Indeed, we can use Lemmas~\ref{lem:conj vs. d-conj} and Corollary~\ref{cor:disc conj via lerp} to bound  this error. 
We now use those results to provide some guidelines on the construction of the dual grids $\setg{V}(x)$ for each $x \in \setg{X}$. 
By Corollary~\ref{cor:disc conj via lerp}, we can either construct $\setg{V}(x)$ \emph{dynamically} such that $\co \big(\setg{V}(x)\big) \supseteq -\dynu(x)\tr \setg{Y}$ at each iteration, 
or construct a \emph{fixed} grid $\setg{V}(x)$ such that $\co\big(\setsg{V} (x)\big) \supseteq \setc{L}(\disc{\cost}_x) =  \Pi_{i=1}^{m} \left[\lip_i^-(\disc{\cost}_x), \lip_i^+(\disc{\cost}_x)\right]$. 
The former requires $\ord(XY)$ operations \emph{per iteration}, while the latter has a \emph{one-time} computational cost of $\ord (X)$ assuming we have access to $\setc{L}(\disc{\cost}_x)$ for each $x \in \setg{X}$ (see also Remark~\ref{rem:constr V}). 
For this reason, we use the second method.
Then, the problem reduces to computing the ``range of slopes'' of $\disc{\cost}_x$. 
In particular, we can use $\lip_i^- (\disc{\cost}_x) = \min_{u \in \setc{U}} \frac{\partial C(x,u)}{\partial u_i}$ and $\lip_i^+ (\disc{\cost}_x) = \max_{u \in \setc{U}} \frac{\partial C(x,u)}{\partial u_i}$ for each dimension $i =1,2,\ldots,m$.\footnote{If the required maximum and minimum directional Lipschitz constants are not available, one can compute them numerically using the discrete function~$\disc{\cost}_x: \setg{U} \ra \Ru$. 
In particular, if these functions are \emph{convex-extensible}, it is possible to compute the range of slopes with an acceptable computational cost: Take $\lip_i^- (\disc{\cost}_x)$ (resp. $\lip_i^+(\disc{\cost}_x)$) to be the minimum finite first forward (resp. maximum finite last backward) difference of $\disc{\cost}_x$ along each dimension $i =1,2,\ldots,m$. 
If $\disc{\cost}_x: \setg{U} \ra \R$ is also \emph{real-valued}, computing the maximum and minimum directional Lipschitz constants using this method has a complexity of $\ord (XU)$.}
$\setg{V}(x)$ can then be constructed as explained in the following remark. 

\begin{Rem} [Construction of $\setg{V}(x)$ for $x \in \setg{X}$] \label{rem:constr V} 
Construct the dual grid $\setg{V}(x) = \Pi_{i=1}^{m} \setg{V}_{i}(x) \subset \R^m$ such that in each dimension~$i=1,2,\ldots,m$, 
the set~$\setg{V}_{i}(x) \subset \R$ contains at least two elements that are less (resp. greater) than $\lip_i^-(\disc{\cost}_x)$ (resp. $\lip_i^+(\disc{\cost}_x)$), so that $\co\big(\setsg{V} (x)\big) \supseteq \setc{L}(\disc{\cost}_x)$. 
This construction of $\setg{V}(x),\ x \in \setg{X}$, has a time complexity of $\ord (X)$.
\end{Rem}

The proposed numerical scheme also increases the computational cost of the extension of the d-CDP Algorithm~\ref{alg:d-CDP general} that uses this scheme. 
In this regard, notice that, for fixed grids $\setg{V}(x),\ x \in \setg{X}$, the first step of the scheme is carried out \emph{once} in a multistep implementation of the d-CDP algorithm. 
In particular, if the grids $\setg{V}(x),\ x \in \setg{X}$, are all of the same size $V$, for the $T$-step implementation of the d-CDP Algorithm~\ref{alg:d-CDP general}, which uses the scheme described above to compute $\lftc{\cost_x}$ numerically, 
\begin{itemize}
\item Step 1 introduces a \emph{one-time} computational cost of $\ord(X(U+V))$, and,
\item Step 2 increases the \emph{per iteration} computational cost of the algorithm to $\wt{\ord}(XY)$. 
\end{itemize} 
Hence, the extension of the d-CDP Algorithm~\ref{alg:d-CDP general} that computes $\lftc{\cost_x}$ numerically has a time complexity of $\wt{\ord}\big(X(U+V) + TXY \big)$ for a $T$-step value iteration problem. 

Finally,  we note that the same scheme described above can be used for numerical computation of the conjugate~$\lftc{\costu}$ of the input cost in the modified d-CDP operator $\mdcdpo$ \eqref{eq:d-CDP op separ}. 
However, since the function is now independent of the state variable, the two steps of the scheme also become independent of $x$ (see also lines~\ref{line_alg_se:const V}-\ref{line_alg_se:LLT of g} and \ref{line_alg_se:before h}-\ref{line_alg_se:h} of Algorithm~\ref{alg:d-CDP separ extended}). 
In particular, the extension of the d-CDP Algorithm~\ref{alg:d-CDP separ} that computes $\lftc{\costu}$ numerically has a time complexity of $\wt{\ord}\big(U+V + T(X+Y+Z) \big)$ for a $T$-step value iteration problem. 

\subsubsection{Numerical simulations} \label{app:ext}
We now provide the results of our numerical simulations of the extended d-CDP algorithms. 
To simplify the exposition, we consider disturbances that have finite support $\setd{W}$ of size $W$, with a given p.m.f. $p:\setd{W}\ra [0,1]$. 
The pseudo-codes of these algorithms are provided in: 
\begin{itemize}
\item[(i)] Algorithm~\ref{alg:d-CDP general extended}: multistep implementation of the extended version of Algorithm~\ref{alg:d-CDP general};
\item[(ii)] Algorithm~\ref{alg:d-CDP separ extended}: multistep implementation of the extended version of Algorithm~\ref{alg:d-CDP separ}.
\end{itemize}
We note that all the functions involved in these extended algorithms are now discrete. 
The setup of our numerical experiments is the same as the one provided in Section~\ref{sec:numerical ex}. 
However, we now consider stochastic dynamics by introducing an additive disturbance belonging to the finite set $\setd{W} = \{-0.1,0,0.1 \}^2$ with a uniform p.m.f. $p(w) = \frac{1}{9}$ for all $w \in \setd{W}$. 
Moreover, the conjugate of the (input-dependent) stage cost, although analytically available, is computed numerically, where the dual grids of the input space ($\setg{V} (x)$ in Algorithm~\ref{alg:d-CDP general extended} and $\setg{V}$ in Algorithm~\ref{alg:d-CDP separ extended}) are constructed following the guidelines of Remark~\ref{rem:constr V}. 
Let us also note that the extension of discrete cost functions $\disc{J_t}:\setg{X}\ra \R$ is also handled via LERP (in the stochastic d-DP operation~\eqref{eq:d-DP stoch}, for the expectation operations in line~\ref{line_alg_ge:expect} of Algorithm~\ref{alg:d-CDP general extended} and line~\ref{line_alg_se:expect} of Algorithm~\ref{alg:d-CDP separ extended}, and for generating greedy control actions). 
Through these numerical simulations, we compare the performance of the stochastic d-DP algorithm and the extended d-CDP algorithms for solving one hundred instances of the optimal control problem with random initial conditions, chosen uniformly from $\setc{X} = [-1,1]^2$. 
Figure~\ref{fig:perf ext} shows the results of our numerical simulations, i.e., the total running time in seconds and the average trajectory cost using greedy control actions (similar to the setup of Section~\ref{sec:numerical ex}). 
In this regard, we note the reported running times match the complexities of the corresponding algorithms for this example:   
\begin{itemize}
\item[(i)] stochastic d-DP algorithm: $\ord(TXUW)$;
\item[(ii)] d-CDP Algorithm~\ref{alg:d-CDP general extended}: $\ord \big( X(U+V) + T X(W+Y) \big)$ -- assuming all the grids $\setg{V}(x)$ are of size $V$;
\item[(iii)] d-CDP Algorithm~\ref{alg:d-CDP separ extended}: $\ord \big( U+V + T (XW+Y+Z) \big)$. 
\end{itemize}
 

\begin{figure}[t]
\begin{subfigure}{.5\textwidth}
  \centering
  \includegraphics[clip, trim=0cm 0cm 0cm .2cm,width=.6\linewidth]{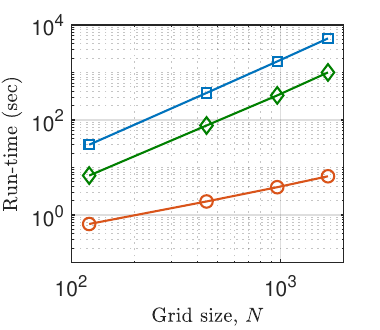}
  \caption{}
  \label{fig:complexity ext}
\end{subfigure}%
\begin{subfigure}{.5\textwidth}
  \centering
  \includegraphics[clip, trim=0cm 0cm 0cm .2cm,width=.6\linewidth]{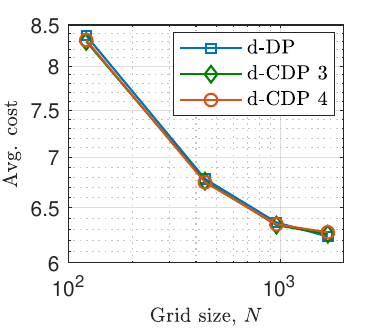}
  \caption{}
  \label{fig:cost ext}
\end{subfigure}
\caption{Performance of the stochastic d-DP algorithm and the extended d-CDP Algorithms~\ref{alg:d-CDP general extended} and \ref{alg:d-CDP separ extended} for different grid sizes ($X,Y,U,Z, V(x)=N$): (a) the total running time for solving a random problem instance; (b) the average cost of controlled trajectories for 100 random initial states.}
\label{fig:perf ext}
\end{figure}

\subsection{Echt examples} \label{app:echt ex} 
In this section, we showcase the application of the proposed d-CDP algorithms in solving the optimal control problem for two typical systems. 
In particular, we use the extended versions of these algorithms for the optimal control of the  SIR (Susceptible--Infected--Recovered) model for epidemics and a noisy inverted pendulum. 
Here, we again compare the performance of the proposed algorithms with the benchmark d-DP algorithm. 
Moreover, through these examples, we highlight some issues that can arise in the real-world application of the proposed algorithms. 

\subsubsection{SIR model} \label{app:example SIR}
We consider the application of the extended version of the d-CDP Algorithm~\ref{alg:d-CDP general} (i.e., Algorithm~\ref{alg:d-CDP general extended}) for computing the optimal vaccination plan in a simple epidemic model. 
To this end, we consider the SIR system described by \cite[Sec.~4]{Ding10}
$$
\left\{ \begin{array}{l}
s_{t+1} = s_t (1-u_t) - \alpha i_t s_t (1-u_t) \\
i_{t+1} = i_t + \alpha i_t s_t (1-u_t) - \beta  i_t  \\
r_{t+1} = r_t + u_t s_t,
\end{array} 
\right.
$$
where $s_t,i_t,r_t \geq 0$ are respectively the normalized number of susceptible, infected, and immune individuals in the population, 
and $u_t \in [0,u_{\max}]$ is the control input which can be interpreted as the proportion of the susceptibles to be vaccinated ($u_{\max}\leq 1$). 
We are interested in computing the optimal vaccination policy with linear cost $\ssum_{t=0}^{T-1}(\gamma i_t + u_t) + \gamma i_T$, over $T=3$ steps ($\gamma > 0$). 
The model parameters are the transmission rate~$\alpha = 2$, the death rate~$\beta = 0.1$, the maximum vaccination capacity $u_{\max} = 0.8$, and the cost coefficient $\gamma = 100$ (corresponding to the values in \cite[Sec.~4.2]{Ding10}). 

We now provide the formulation of this problem w.r.t. the notation of Section~\ref{sec:conj DP}. 
Note that the variable $r_t$ (number of immune individuals) can be safely ignored as it affects neither the evolution of the other two variables nor the cost to be minimized. 
Hence, we can take $x_t = (s_t,i_t) \in \R^2$ and $u_t \in \R$ as the state and input variables. 
The dynamics of the system is then described by $x_{t+1} = \dynx(x_t) + \dynu(x_t)\cdot u_t$, where  
\begin{equation*}
\dynx(s,i) =  \left[ \begin{array}{c} s - \alpha s i \\ (1-\beta)i+\alpha s i \end{array} \right], \quad \dynu(s,i) = \left[ \begin{array}{c} -s + \alpha s i \\ - \alpha s i \end{array} \right].
\end{equation*}
We consider the state constraint $x_t \in \setc{X} = [0,1]\times[0,0.5]$, and the input constraint $u_t \in \setc{U} = [0,0.8]$. 
In particular, the constraint $i_t \in [0,0.5]$ is chosen so that the feasibility condition of Assumption~\ref{As:general}-\ref{As:constr} is satisfied. 
Also, the corresponding stage and terminal costs are $\cost(s,i,u) = \gamma i + u$, and $\cost_T (s,i) = i$, respectively. 
We note that, although the conjugate of the stage cost ($\lftc{\cost_x}$) is analytically available, we use the scheme provided in Appendix~\ref{subsec:extension num conj} to compute $\lftc{\cost_x}$ numerically. 

In order to deploy the d-DP algorithm and the extended d-CDP Algorithm~\ref{alg:d-CDP general extended}, we use uniform grid-like discretizations of the state and input spaces and the their dual spaces ($\setg{X},\setg{Y} \subset \R^2$ and $\setg{U}, \setg{V}(x) \subset \R$ for $x \in \setg{X}$). 
In particular, discrete state and input spaces are such that $\co (\setg{X}) = \setc{X}$ and $\co (\setg{U}) = \setc{U}$. 
The dual grids $\setg{Y}$ and $\setg{V}(x)$ are constructed following the guidelines provided in Remarks~\ref{rem:constr Y} and \ref{rem:constr V} (with $\alpha = 0.5$). 
Let us also note that the extension of discrete cost functions $\disc{J_t}:\setg{X}\ra \R$ in d-DP is handled via LERP. 

Figure~\ref{fig:sir} depicts the computed cost $\disc{J}_0: \setg{X} \ra \R$ and control law $\disc{\mu}_0: \setg{X} \ra \setg{U}$ using the d-DP and d-CDP algorithms. 
In particular, for the d-CDP algorithm, we are reporting the simulation results for two configurations of the dual grids. 
Table~\ref{tab:sir} reports the corresponding grid sizes and the running times for solving the backward value iteration problem.
In particular, notice how the d-DP algorithm outperforms the d-CDP algorithm with the discretization scheme of configuration~1, where $X = Y$ and $U =V$. 
In this regard, we note that, in the setup of this example, the time complexity of the d-DP algorithm is of $\ord(TXU)$, while that of the d-CDP algorithm is of $\ord\big(X(U+V) + TXY) = \ord\big(XU + TX^2)$. 
Hence, what we observe is indeed expected since the number of input channels is less than the dimension of the state space. 
For such problems, we should be cautious when using the d-CDP algorithm, particularly, in choosing the sizes $Y$ and $V$ of the dual grids. 
For instance, for the problem at hand, as reported in Table~\ref{tab:sir}, we can reduce the size of the dual grids as in configuration~2 and hence reduce the running time of the d-CDP algorithm. 
However, as shown in Figure~\ref{fig:sir}, this reduction in the size of the dual grids does not affect the quality of the computed costs and hence the corresponding control laws.

\begin{figure}[t]
\begin{subfigure}{.33\textwidth}
  \centering
  \includegraphics[clip, trim=0cm 0cm 0cm 0cm,width=1\linewidth]{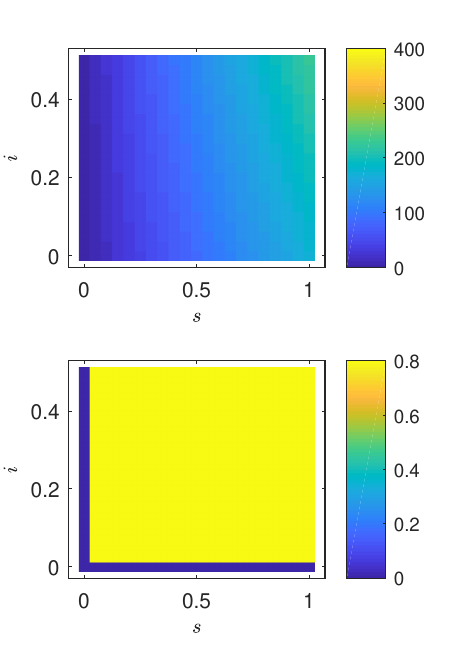}
  \caption{d-DP}
  \label{fig:J_sir_dp}
\end{subfigure}%
\begin{subfigure}{.33\textwidth}
  \centering
  \includegraphics[clip, trim=0cm 0cm 0cm 0cm,width=1\linewidth]{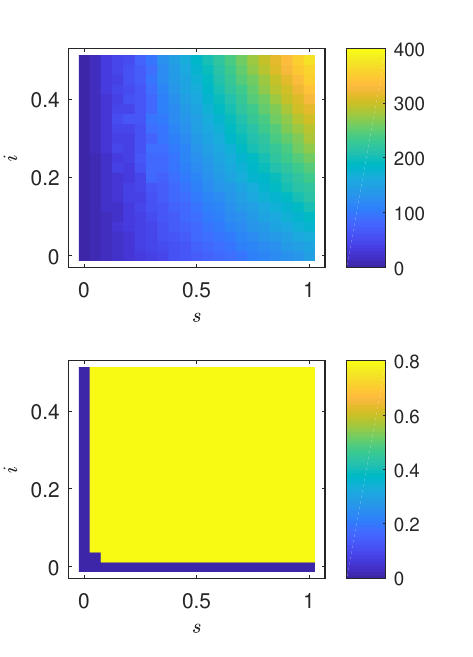}
  \caption{d-CDP (config. 1)}
  \label{fig:J_sir_cdp1}
\end{subfigure}%
\begin{subfigure}{.33\textwidth}
  \centering
  \includegraphics[clip, trim=0cm 0cm 0cm 0cm,width=1\linewidth]{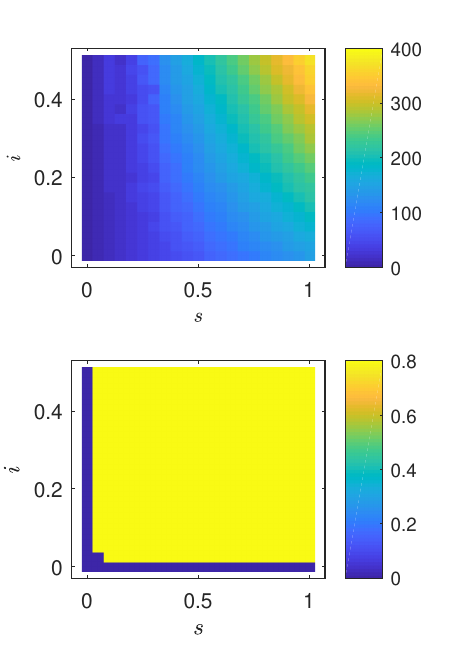}
  \caption{d-CDP (config. 2)}
  \label{fig:J_sir_cdp2}
\end{subfigure}%
\caption{Optimal control of SIR model: Cost $\disc{J}_0:\setg{X}\ra\R$ (top) and control law $\disc{\mu}_0:\setg{X}\ra\setg{U}$ (bottom).}
\label{fig:sir}
\end{figure} 

\begin{table}[t]
\caption{Optimal control of SIR model: Grid sizes and running times.}
\label{tab:sir}
\begin{center}
\begin{small}
\begin{tabular}{lcc}
\toprule
Alg. & Grid size & Running time \\
\midrule
$\mathrm{d}$-DP  & $X = 21^2,\ U = 21$ & $2.00$ sec \\
$\mathrm{d}$-CDP Alg.~\ref{alg:d-CDP general extended} (config. 1)* & $Y = 21^2,\ V = 21$ & $18.26$ sec \\
$\mathrm{d}$-CDP Alg.~\ref{alg:d-CDP general extended} (config. 2)* & $Y = 11^2,\ V = 11$ & $6.19$ sec \\
\bottomrule
\multicolumn{3}{l}{*$X$ and $U$ are the same as in $\mathrm{d}$-DP.} \\
\end{tabular}
\end{small}
\end{center}
\end{table}


\vskip 1cm

\subsubsection{Inverted pendulum} \label{app:example IP} 
We now consider an application of the extension of the d-CDP Algorithm~\ref{alg:d-CDP separ} (i.e., Algorithm~\ref{alg:d-CDP separ extended}) which handles additive disturbance in the dynamics. 
To this end, we consider the optimal control of a noisy inverted pendulum with quadratic costs, over a finite horizon. 
The deterministic, continuous-time dynamics of the system is described by 
$ \ddot{\theta} = \alpha \sin\theta + \beta \dot{\theta} + \gamma u$, 
where $\theta$ is the angle (with $\theta = 0$ corresponding to upward position), and $u$ is the control input \cite[Sec.~4.5.3]{Busoniu17}. 
The values of the parameters are $\alpha = 118.6445$, $\beta = -1.599$, and $\gamma = 29.5398$ (corresponding to the values of the physical parameters in \cite[Sec.~4.5.3]{Busoniu17}). 
Here, we consider the corresponding discrete-time dynamics, by using the forward Euler method with sampling time $\tau = 0.05$. 
We also introduce stochasticity by considering an additive disturbance in the dynamics. 
The discrete-time dynamics then reads as 
$
x_{t+1} = \dynx(x_{t}) + B u_t + w_t,
$ 
where $x_t = (\theta_t,\dot{\theta}_t) \in \R^2$ is the state variable (angle and angular velocity), $w_t \in \R^2$ is the disturbance, and 
\begin{equation*}
\dynx(\theta,\dot{\theta}) =  \left[ \begin{array}{c} \theta \\ \dot{\theta} \end{array} \right] + \tau \cdot \left[ \begin{array}{c} \dot{\theta} \\ \alpha \sin\theta + \beta \dot{\theta} \end{array} \right], \quad B = \left[ \begin{array}{c} 0 \\ \gamma \end{array} \right].
\end{equation*} 
We consider the state constraint 
$x_t \in \setc{X} = [-\frac{\pi}{3},\frac{\pi}{3}] \times [\pi,\pi] \subset \R^2$, 
and the input constraint
$u_t \in \setc{U} = [-3,3] \subset \R$. 
The control horizon is $T=50$, and the state, input, and terminal costs are quadratic, i.e., $\cost_{\nu}(\cdot) = \norm{\cdot}^2, \ \nu \in \{\text{s},\text{i},T\}$. 
We note that the conjugate of the input cost $\lftc{\costu}$ is analytically available, 
and given by $\lftc{\costu}(v) = \hat{u}v - \hat{u}^2, \ v \in \R$, where $\hat{u} = \max \left\{ -3, \ \min \left\{\frac{v}{2}, \ 3 \right\} \right\}$. 
Finally, we assume that the disturbances $w_t$ are i.i.d., with a uniform distribution over the finite  support $\setd{W} = \{0, \pm0.025\frac{\pi}{3}, \pm0.05\frac{\pi}{3} \} \times \{0, \pm0.025\pi, \pm0.05\pi \} \subset \R^2$ of size $W=5^2$. 

We solve the optimal control problem described above by deploying the stochastic versions of the d-DP algorithm~\eqref{eq:d-DP stoch} and the extended d-CDP Algorithm~\ref{alg:d-CDP separ extended} which handles additive disturbance in the dynamics using the method described in Appendix~\ref{subsec:extension stochastic}. 
We use uniform, grid-like discretizations $\setg{X}$ and $\setg{U}$ for the state and input spaces such that $\co (\setg{X})= [-\frac{\pi}{4},\frac{\pi}{4}]\times[-\pi,\pi]\subset \setc{X}$ and $\co (\setg{U})= \setc{U}$. 
This choice of discrete state space $\setg{X}$ particularly satisfies the feasibility condition of Assumption~\ref{As:feasible discrete}. 
(Note that the set~$\setc{X}$ however does not satisfy the feasibility condition of Assumption~\ref{As:general}-\ref{As:constr}). 
For the construction of the grids $\setg{Y}$ and $\setg{Z}$, we follow the guidelines provided in Remarks~\ref{rem:constr Y} and \ref{rem:constr Z} (with $\alpha = 2$).
We note that the extension of discrete cost functions $\disc{J_t}:\setg{X}\ra \R$ in all the algorithms is handled via \emph{nearest neighbor} (w.r.t the discrete points in $\setg{X}$). 
 
The computed cost $\disc{J}_0: \setg{X} \ra \R$ and control law $\disc{\mu}_0: \setg{X} \ra \setg{U}$ using the d-DP and d-CDP algorithms are shown in Figure~\ref{fig:ip}, 
and Table~\ref{tab:sir} reports the grid sizes and the running times for solving the backward value iteration problem. 
In particular, notice how the d-CDP algorithm has a significantly lower time requirement compared to the d-DP algorithm. 
In this regard, we note that, in the setup of this example, the time complexity of the (stochastic) d-DP algorithm is of $\ord(TXUW)$, while that of the d-CDP algorithm is of $\ord\big(T(XW+Y+Z)) = \ord (TXW)$. 
 
\begin{figure}[t]
\begin{subfigure}{.5\textwidth}
  \centering
  \includegraphics[clip, trim=0cm 0cm 0cm 0cm,width=.8\linewidth]{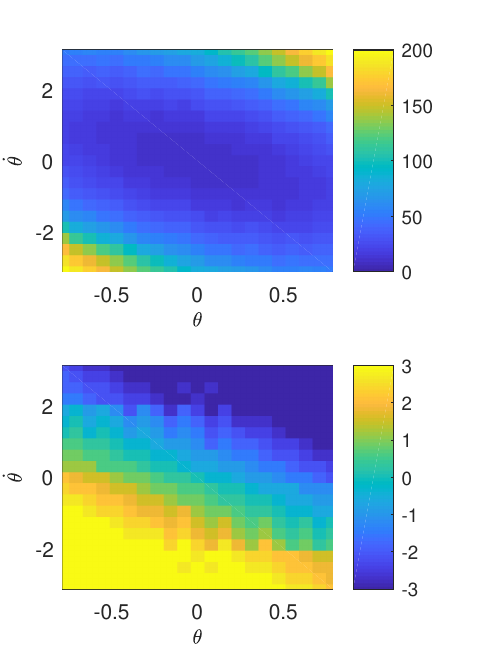}
  \caption{d-DP}
  \label{fig:J_ip_dp}
\end{subfigure}%
\begin{subfigure}{.5\textwidth}
  \centering
  \includegraphics[clip, trim=0cm 0cm 0cm 0cm,width=.8\linewidth]{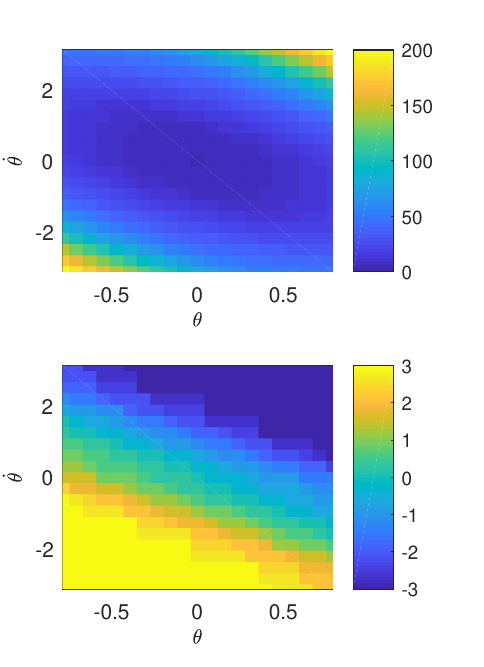}
  \caption{d-CDP}
  \label{fig:J_ip_cdp}
\end{subfigure}%
\caption{Optimal control of pendulum: Cost $\disc{J}_0:\setg{X}\ra\R$ (top) and control law $\disc{\mu}_0:\setg{X}\ra\setg{U}$ (bottom).}
\label{fig:ip}
\end{figure} 

\begin{table}[t]
\caption{Optimal control of pendulum: Grid sizes and running times.}
\label{tab:ip}
\begin{center}
\begin{small}
\begin{tabular}{lcc}
\toprule
Alg. & Grid size & Running time \\
\midrule
$\mathrm{d}$-DP  & $X = 21^2,\ U = 21$ & $281.2$ sec \\
$\mathrm{d}$-CDP Alg.~\ref{alg:d-CDP separ extended} & $X,Y,Z = 21^2$ & $9.3$ sec \\
\bottomrule
\end{tabular}
\end{small}
\end{center}
\end{table}

\section{The d-CDP MATLAB package} \label{app:matlab} 

The MATLAB package \cite{Kolari20} concerns the implementation of the two d-CDP algorithms (and their extensions) developed in this study. 
The provided codes include detailed instructions/comments on how to use them. 
Also provided are the numerical examples of Section~\ref{sec:numerical ex} and Appendix~\ref{app:num example}. 
In what follows we highlight the most important aspects of the developed package with a list of available routines. 

Recall that, in this study, we exclusively considered \emph{grid-like} discretizations of both primal and dual domains for discrete conjugate transforms. 
This allows us to use the MATLAB function \texttt{griddedInterpolant} for the LERP extensions within the d-CDP algorithms by setting the interpolation and extrapolation methods of this function to \texttt{linear}. 
However, this need not be the case in general, and the user can choose other options available in the \texttt{griddedInterpolant} routine, by modifying the corresponding parts of the provided codes; see the comments in the codes for more details. 
We also note that for the discrete conjugation (LLT), we used the MATLAB package (the \texttt{LLTd} routine and two other subroutines, specifically) provided in \cite{Lucet97} to develop an n-dimensional LLT routine via factorization (the function \texttt{LLT} in the package). 
Table~\ref{tab:MATLAB package 1} lists other routines that are available in the developed package. 
In particular, there are four high-level functions (functions (1-4) in Table~\ref{tab:MATLAB package 1}) that are developed separately for the two settings considered in this article. 
We also note that the provided implementations do not require the discretization of the state and input spaces to satisfy the state and input constraints (particularly, the feasibility condition of Assumption~\ref{As:feasible discrete}). 
Nevertheless, the function \texttt{feasibility\_check\_}$*$ ($* = 1, 2$) is developed to provide the user with a warning if that is the case. 
Finally, we note that the conjugates of four extended real-valued convex functions are also provided in the package (functions (11-14) in Table~\ref{tab:MATLAB package 1}).

\begin{table}[h]
\caption{List of routines available in the d-CDP MATLAB package.}
\label{tab:MATLAB package 1}
\begin{center}
\begin{small}
\vskip -0.1in
\begin{tabular}{p{4cm}p{11.9cm}}
\toprule
MATLAB Function & Description \\
\midrule
(1) \texttt{d\_CDP\_Alg\_}$*$ & Backward value iteration for finding costs using d-CDP \\
(2) \texttt{d\_DP\_Alg\_}$*$ & Backward value iteration for finding costs and control laws using d-DP \\
(3) \texttt{forward\_iter\_J\_}$*$ & Forward iteration for finding the control sequence for a given initial condition using costs (derived via d-DP or d-CDP) \\
(4) \texttt{forward\_iter\_Pi\_}$*$ & Forward iteration for finding the control sequence for a given initial condition using control laws (derived via d-DP) \\
(5) \texttt{feasibility\_check\_}$*$ & For checking if the discrete state-input space satisfies the constraints \\
(6) \texttt{eval\_func} & For discretization of an analytically available function over a given grid \\
(7) \texttt{eval\_func\_constr} & An extension of \texttt{eval\_func} that also checks given constraints \\
(8) \texttt{ext\_constr} & For extension of a discrete function while checking a given set of constraints \\
(9) \texttt{ext\_constr\_expect} & For computing expectation of a discrete function subjected to additive noise \\ 
(10) \texttt{slope\_range} & For computing the range of slopes of a convex-extensible discrete function with a grid-like domain \\
\midrule
(11) \texttt{conj\_Quad\_ball} & Conjugate of $g(u) = u\tr R u$ ($R\succ 0$) with $\dom(g) = \{u\in\R^n: \norm{u} \leq r\}$  \\
(12) \texttt{conj\_Quad\_box} & Conjugate of $g(u) = u\tr R u$ ($R\succ 0$) with $\dom(g) = \{u\in\R^n: \ul{u}_i \leq u_i \leq \ol{u}_i \}$  \\
(13) \texttt{conj\_L1\_box} & Conjugate of $g(u) = \ssum_{i=1}^n |u_i|$ with $\dom(g) = \{u\in\R^n: \ul{u}_i \leq u_i \leq \ol{u}_i \}$   \\
(14) \texttt{conj\_ExpL1\_box} & Conjugate of $g(u) = \ssum_{i=1}^n e^{|u_i|} - n$ with $\dom(g) = \{u\in\R^n: \ul{u}_i \leq u_i \leq \ol{u}_i \}$   \\
\bottomrule 
\multicolumn{2}{l}{$* = 1, 2$, corresponding to Settings~\ref{Set:prob class I} and~\ref{Set:prob class II}, respectively.} \\
\end{tabular}
\end{small}
\end{center}
\end{table}

\bibliographystyle{apalike} 
\begin{small}
\bibliography{ref}
\end{small}

\end{document}